\definecolor{darkblue}{rgb}{0.0,0,0.7} 
\definecolor{darkred}{rgb}{0.7,0,0} 
\definecolor{darkgreen}{rgb}{0, .6, 0} 
\newtheorem{theorem}{Theorem}[section]
\newtheorem{lemma}[theorem]{Lemma}
\newtheorem{corollary}[theorem]{Corollary}
\newtheorem{proposition}[theorem]{Proposition}
\theoremstyle{definition}
\newtheorem{definition}[theorem]{Definition}
\newtheorem{example}[theorem]{Example}
\newtheorem{remark}[theorem]{Remark}
\numberwithin{equation}{section}
\newcommand{\A}[2]{\tensor[_{\; #1}]{\hspace{-1mm}\mathcal{T}}{_{#2}}}
\newcommand{\R}[2]{\tensor[_{#1}]{\mathcal{P}}{_{#2}}}
\newcommand{\G}[2]{\tensor[_{#1}]{{G}}{_{#2}}}
\newcommand{\F}[2]{\tensor[_{\; #1}]{{F}}{_{#2}}}
\newcommand{\E}[2]{\tensor[_{\, #1}]{{E}}{_{#2}}}
\newcommand{\Q}[2]{\tensor[_{#1}]{{Q}}{_{#2}}}
\newcommand{\U}[2]{\tensor[_{#1}]{{U}}{_{#2}}}
\newcommand{\W}[2]{\tensor[_{#1}]{{W}}{_{#2}}}
\newcommand{\HH}[2]{\tensor[_{#1}]{{H}}{_{#2}}}
\newcommand{\II}[2]{\tensor[_{#1}]{{I}}{_{#2}}}
\newcommand{\ZZ}[2]{\tensor[_{#1}]{{Z}}{_{#2}}}
\newcommand{\BB}[2]{\tensor[_{#1}]{{B}}{_{#2}}}
\newcommand{\fF}[2]{\tensor[_{#1}]{\mathcal{F}}{_{#2}}}
\newcommand{\fE}[2]{{}_{#1}{\mathcal{E}}{_{#2}}}
\newcommand{\B}{\mathscr{B}}
\newcommand{\cE}{\mathsf{E}}
\newcommand{\cF}{\mathsf{F}}
\newcommand{\fk}{\mathbbm{k}}
\newcommand{\g}{\mathfrak{g}}
\newcommand{\SSYT}{\operatorname{SSYT}}
\newcommand{\COL}{\operatorname{Col}}
\newcommand{\GT}{\mathrm{GT}}
\newcommand{\GGTT}{\mathcal{GT}}
\newcommand{\wt}{\operatorname{wt}}
\newcommand{\height}{\operatorname{ht}}
\newcommand{\newword}[1]{\emph{\textbf{#1}}}
\newcommand{\Span}{\operatorname{Span}}
\newcommand{\End}{\operatorname{End}}
\newcommand{\rep}{\operatorname{rep}}
\newcommand{\maxSpec}{\operatorname{maxSpec}}
\newcommand{\ev}{\operatorname{ev}}
\newcommand{\Ev}{\operatorname{\mathsf{Ev}}}
\newcommand{\Quo}{\operatorname{\mathsf{Pr}}}
\newcommand{\hideqed}{\renewcommand{\qed}{}}
\newlength\cellsize \setlength\cellsize{10\unitlength}
\newcommand\cellify[1]{\def\thearg{#1}\def\nothing{}%
\ifx\thearg\nothing\vrule width0pt height\cellsize depth0pt%
  \else\hbox to 0pt{\usebox6\hss}\fi%
  \vbox to 20\unitlength{\vss\hbox to 20\unitlength{\hss$#1$\hss}\vss}}
\newcommand\tableau[1]{\vtop{\let\\=\cr
\setlength\baselineskip{-10000pt}
\setlength\lineskiplimit{10000pt}
\setlength\lineskip{0pt}
\halign{&\cellify{##}\cr#1\crcr}}}
\newcommand{\hackcenter}[1]{
 \xy (0,0)*{#1}; \endxy}
\tikzstyle directed=[postaction={decorate,decoration={markings,
    mark=at position #1 with {\arrow{>}}}}]
\tikzstyle rdirected=[postaction={decorate,decoration={markings,
    mark=at position #1 with {\arrow{<}}}}]
\tikzset{fontscale/.style = {font=\relsize{#1}}
    }
\author[S. Daugherty]{Spencer Daugherty}
\address[S. Daugherty]{
    Department of Mathematics, 
    University of Colorado Boulder, 
    CO, USA 
}
\email{spencer.daugherty@colorado.edu}
\author[N. Gonz{\'a}lez]{Nicolle Gonz{\'a}lez}
\address[N. Gonz{\'a}lez]{
    Department of Mathematics, 
    University of British Columbia, 
    Vancouver, BC, Canada 
}
\email{nicolle@math.ubc.ca}
\author[B. Muniz]{B\'arbara Muniz}
\address[B. Muniz]{
    Institute of Mathematics, 
    Jagiellonian University in Krak{\'o}w, 
    Poland 
}
\email{barbara.santanamuniz@doctoral.uj.edu.pl}
\author[P. S. Ocal]{Pablo S. Ocal}
\address[P. S. Ocal]{
    Department of Mathematics, 
    University of British Columbia, 
    Vancouver, BC, Canada 
}
\email{socal@math.ubc.ca}
\author[J. Pan]{Jianping Pan}
\address[J. Pan]{
    School of Mathematical and Statistical Sciences, 
    Arizona State University, 
    AZ, USA 
}
\email{jianping.pan@asu.edu}
\author[J. Torres]{Jacinta Torres}
\address[J. Torres]{
    Institute of Mathematics, 
    Jagiellonian University in Krak{\'o}w, 
    Poland 
}
\email{jacinta.torres@uj.edu.pl}
\title{Structure and Geometry of the Tableaux Algebra}
\subjclass[2020]{13A70, 05E40, 13C05, 05E10, 13C70, 14D06, 05E14, 05E16, 17B37}
\keywords{
    Semistandard Young tableaux, 
    maximal spectrum, 
    toric degenerations of flag varieties, 
    Gelfand--Tsetlin semigroup, 
    crystal graphs. 
}
\thanks{For the purpose of open access, the authors have applied a CC-BY copyright 
license to any Author Accepted Manuscript (AAM) version arising from this paper.}
\begin{document}

\begin{abstract}
    We study the monoid algebra 
    $\tensor[_{\; n}]{\hspace{-1mm}\mathcal{T}}{_{m}}$ 
    of semistandard Young tableaux, which coincides with the Gelfand--Tsetlin 
    semigroup ring $\mathcal{GT}_{n}$ when $m = n$. 
    Among others, we show that this algebra is 
    commutative, Noetherian, reduced, Koszul, and Cohen--Macaulay. 
    We provide a complete 
    classification of its maximal ideals and compute the topology of its 
    maximal spectrum. Furthermore, we classify its irreducible modules and 
    provide a faithful semisimple representation. 
    We also establish that its associated variety coincides with 
    a toric degeneration of certain partial flag varieties constructed by 
    Gonciulea--Lakshmibai. As an application, we show that this algebra yields 
    injective embeddings of $\mathfrak{sl}_n$-crystals, 
    extending a result of Bossinger--Torres. 
\end{abstract}

\maketitle 



\section{Introduction}

Semistandard Young tableaux are ubiquitous in combinatorics and representation theory, 
and have seen applications in a wide range of fields, from probability theory to 
algebraic geometry and beyond. Consequently, many algebraic structures on the set 
of semistandard Young tableaux have been widely studied, arguably the most famous 
of which is the plactic monoid; see for example 
\cite{Lothaire, LS-plactic, knuth, superplactic, KimProtsak, poirier1995algebres}. 
In this paper we study a monoid structure on this set that does \textit{not} arise 
from an insertion algorithm, but rather from the naive concatenation of rows. 
We prove that the algebra associated to this monoid satisfies some remarkable algebraic, 
combinatorial, and geometric properties. 

Given semistandard Young tableaux $T$ and $T'$, each with at most $n$ rows 
and entries in $\{1,\dots,m\}$, we declare $T \star T'$ to be 
the semistandard Young tableau obtained by horizontally concatenating the rows 
and then sorting the entries within each row. 
This binary operation endows the set $\SSYT_m^n$ 
of semistandard Young tableaux with at most $n\leq m$ rows and entries in 
$\{1,\dots,m\}$ with the structure of a commutative monoid. 
The \newword{tableaux algebra} $\A{n}{m}$ is the monoid algebra of $\SSYT_m^n$, 
taken over $\fk$, an algebraically closed field of characteristic zero. 
When $m = n$, notorious incarnations of the monoid $\SSYT_m^n$ arise in connections 
with cluster algebras and canonical bases, Gelfand--Tsetlin polytopes, 
and toric degenerations of flag varieties; see, for example 
\cite{chang-duan-fraser-li:quantum, li:canonical, alexandersson:gt, KimProtsak, KM03, HM11}. 
However, a study of the fundamental algebraic structure and spectrum of $\A{n}{m}$ 
seems to be lacking in the literature. We remedy this here, establishing some of its 
fundamental structural properties, providing a complete description 
of its maximal spectrum, and exploring some further applications to 
algebraic geometry and representation theory. 

We begin by showing that $\A{n}{m}$ is a Noetherian, reduced, and Jacobson ring 
(Proposition \ref{prop:IntegralDomain-Noetherian-Reduced-Jacobson}). 
The tableaux algebra admits a natural presentation by column tableaux, 
which enables us to realize it as an explicit quotient of 
a finitely generated polynomial ring (Corollary \ref{cor:tensorprod}), 
and we denote its defining ideal by $\R{n}{m}$. 
In turn, by precisely identifying 
the generating relations of $\R{n}{m}$, 
we establish that not only is this algebra quadratic, it is in fact Koszul (Theorem \ref{thm:quadratic}, Corollary \ref{cor:koszul}). 
Moreover, we enumerate the minimal generating set for $\R{n}{m}$, 
which we later show forms a Gr{\"o}bner basis (Theorem \ref{thm:enumeration}, 
Corollary \ref{cor:enumeration}, Corollary \ref{cor:toricdegeneration}, 
Corollary \ref{cor:pluckerenumeration}). In particular, we provide 
an explicit combinatorial description and closed formulas for the 
\newword{Pl{\"u}cker--Grassmann relations} and the \newword{incidence relations}. 
To the best of the author's knowledge, this is the first time that such an 
enumeration of these relations appears in the literature. 

With the defining ideal in hand, we turn our attention to its associated variety 
$\mathcal{V}(\R{n}{m})$. We show that the tableaux algebra can be decomposed as a 
certain tensor product, with a number of free variables that depends on the difference 
between $m$ and $n$ (Corollary \ref{cor:tensorprod}). From this decomposition, 
it is evident that the tableaux algebra contains a polynomial subring in either 
two or three variables, which significantly complicates the study of its prime ideals. 
The prime spectrum is well understood for bivariate polynomial rings, 
but the analogous description for three or more variables is a difficult open problem 
in commutative algebra. The tableaux algebra is far more complicated than a polynomial ring, 
and although we are optimistic that some of the combinatorial techniques 
we present here are useful for classifying certain classes of prime ideals, 
a full description of its prime spectrum is currently out of reach. Notwithstanding this, 
we are able to classify the maximal ideals of $\A{n}{m}$ (Theorem \ref{thm:Maxideals}), 
and extensively study them via certain evaluation maps. This classification identifies each 
maximal ideal $M_{\underline{t}}$ with a tuple $\underline{t} \in \mathcal{V}(\R{n}{m}) \times \fk^r$, 
where $r$ is the number of aforementioned free variables. Whenever a tuple has all entries nonzero, 
we say that it corresponds to an \newword{ordinary} maximal ideal, 
which we study in more detail. 
For example, we show that the ordinary maximal ideals of $\A{n}{m}$ form 
a commutative group $\mathcal{M}_{ord}(n,m)$ under the binary operation 
$M_{\underline{t}} \circledast M_{\underline{t'}} \coloneqq M_{\underline{tt'}}$ 
(Proposition \ref{prop:maxidealgroup}). 

To understand the topology of $\maxSpec(\A{n}{m})$, we exploit 
an analogous characterization of the maximal ideals of the polynomial ring 
$\fk[X]$ in $nm$ variables, identifying each of its maximal ideals with an $n\times m$ matrix. 
This description yields a realization of $\mathcal{M}_{ord}(n,m)$ as a certain quotient 
of the group of ordinary maximal ideals of $\fk[X]$, which we again describe explicitly 
via evaluation homomorphisms (Theorem \ref{thm:EvalsMaxIdeal}). 
This correspondence enables a complete description of the topology of $\maxSpec(\A{n}{m})$ 
(Theorem \ref{thm:opens-max-spec}), which in turn gives rise to a continuous map 
from the maximal spectrum of $\fk[X]$ to that of $\A{n}{m}$ 
(Corollary \ref{cor:continuous-map-max-spec}). 

Understanding the maximal spectrum turns out to be particularly useful 
for a basic understanding of the representation theory of the tableaux algebra. 
We give a complete description of its finite dimensional irreducible modules 
(Corollary \ref{coro:finite-dimensional-irreducible}), and since these are 
in bijection with its maximal ideals, we immediately obtain that $\A{n}{m}$ 
has no infinite dimensional irreducible modules. 
In particular, we are in fact giving 
a complete classification of the simple modules of the tableaux algebra. 
Since $\A{n}{m}$ is semiprimitive (but not primitive), it has 
a faithful semisimple module, which must be infinite dimensional 
by the above classification of simple modules. This faithful semisimple module 
does not arise from the natural action on a polynomial ring, but rather we describe 
one in terms of its maximal ideals (Proposition \ref{prop:faithfulssmodule}), 
showing that the maximal spectrum fully determines the tableaux algebra. 
Unfortunately, structural classification results beyond these are essentially hopeless, 
as one may employ classical facts about the Drozd ring or the representation theory 
of free associative algebras to justify that a complete characterization 
of the representations of $\A{n}{m}$ is not a tractable pursuit. 

Although to the best of our knowledge the tableaux algebra 
has not been explicitly studied in the literature for $m$ and $n$ \textit{distinct}, 
when $m = n$ it is isomorphic to the well-known 
\newword{Gelfand--Tsetlin semigroup ring} $\GGTT_n$, as in \cite{GTpatterns}. 
This ring arises in the study of polytopes, quantum groups, cluster algebra, 
crystals, flag varieties, and many other areas of algebraic combinatorics; 
see for example 
\cite{KnutsonTao, li:canonical, KimProtsak, BL09, Seshadri, conescrystalspatterns, alexandersson:gt}. 
Crucially, $\GGTT_n$ is a flat degeneration of the ring of Pl{\"u}cker coordinates 
quotiented by the so-called Pl{\"u}cker relations; see \cite{GL96, KM03} 
(and Theorem \ref{thm:degeneration-n=m}). The resulting ring, termed the 
\newword{Pl{\"u}cker algebra}, is a fundamental object in combinatorial commutative algebra 
and geometry, as its relations encode the defining relations of the complete flag variety. 
Flat degenerations of the Pl{\"u}cker algebra, and the associated toric degenerations of 
flag and Schubert varieties, have received ample attention over the course of several decades, 
see \cite{Lakshmibai, mirrorsymmetry, bea_polyhedral, chiviri, caldero:toric, dehy, fgl}. 
The study carried out by Gonciulea--Lakshmibai in \cite{GL96}, 
where they describe a particular degeneration of the algebra of global sections 
of the space of partial flags, is particularly relevant for us. 
We realize $\A{n}{m}$ as the so-called Hibi algebra of the poset of column tableaux, 
see \cite{Hibi}, which allows us to show that for $m > n$ the tableaux algebra coincides 
with said flat degeneration of Gonciulea--Lakshmibai. 
Consequently, $\A{n}{m}$ is a flat degeneration of the algebra of global sections 
$\bigoplus_{\underline{a}}\Gamma(SL_m/\Q{n}{m},L^{\underline{a}})$ 
(Theorem \ref{thm:flatdeg}), where $SL_n/\Q{n}{m}$ is the space of partial flags 
$\{0\subsetneq V_1\subsetneq \dots \subsetneq V_n \subseteq \fk^m \mid \dim(V_i)=i\}$, 
which in turn implies that the affine variety $\mathcal{V}(\R{n}{m})$ is 
a toric degeneration of $SL_n/\Q{n}{m}$ (Corollary \ref{cor:toricdegeneration}). 
Both of these results are extensions of the aforementioned classical flat degeneration 
occurring when $m = n$. Knowing this, we use standard commutative algebra techniques 
to show that $\A{n}{m}$ is Cohen--Macaulay, and thus so is 
$\bigoplus_{\underline{a}}\Gamma(SL_m/\Q{n}{m},L^{\underline{a}})$. 
This flat degeneration can be used for the aforesaid enumeration 
of the minimal number of Grassmannian and incidence relations for the partial flag variety 
$SL_m/\Q{n}{m}$ (Theorem \ref{thm:enumeration} and Corollary \ref{cor:pluckerenumeration}). 
A particularly charming feature of our approach is that it both unifies 
and provides an explanation for the apparent discrepancy between 
the classical flat degeneration of the Pl{\"u}cker algebra and the flat degenerations 
of Gonciulea--Lakshmibai arising from the complete flag variety 
(Remark~\ref{rema:embedding-flag-variety}). 

Finally, as an application, we relate the multiplication of the tableaux algebra 
and the structure of crystal graphs for quantum group representations. 
Crystals were introduced independently by Kashiwara and Lusztig as 
a combinatorial skeleton for the irreducible highest weight modules of the quantum group 
of a complex Lie algebra $\mathfrak{g}$; see, 
\cite{kashiwara1990crystalizing, kashiwara1991crystal, Lusztig1}. 
For $\g = \mathfrak{sl}_n$, the basis of a crystal graph $\B(\lambda)$ is in bijection 
with the set of semistandard Young tableaux $\SSYT_n(\lambda)$. 
Since multiplication within $\A{n}{n}$ yields an action of $\SSYT_n^n$, 
for each $T \in \SSYT_n(\mu)$ we thus have an injective map of crystals 
$\Phi_T:\B(\lambda) \to \B(\lambda+\mu)$ sending a tableau $T' \in \SSYT_n(\lambda)$ 
to the tableau $T' \star T \in \SSYT_n(\lambda+\mu)$. 
In the context of string polytopes, see 
\cite{Stu95,conescrystalspatterns, BossingerFourier,BerensteinZelevinsky}, 
the analogous map was studied by Bossinger--Torres in \cite{BT25}. 
They showed that when $T$ corresponds to a weight zero vector in 
the adjoint representation, the map corresponding to $\Phi_T$ is 
a weight-preserving crystal embedding. We extend these results and show that 
when $T$ is a highest or lowest weight vector, the induced map $\Phi_T$ 
is also a crystal embedding (Theorem \ref{thm:crystal}). 

\subsection*{Outline} 

In Section \ref{sec:algebra}, we establish the preliminaries, 
as well as the main structural and enumerative results of the tableaux algebra 
and its defining ideal. In Section \ref{sec:ideals-spectra}, 
we determine the maximal spectrum of the tableaux algebra and describe its topology. 
In Section \ref{sec:rep}, we briefly study the representations of 
the tableaux algebra, including a classification of its irreducible modules, 
and providing a faithful semisimple module. In Section \ref{sec:toric}, 
we discuss some geometric connections between the tableaux algebra and 
partial flag varieties, including the Cohen--Macaulayness of the former and 
that the zero locus of the defining ideal of the former is a toric degeneration 
of the latter. In Section \ref{sec:crystals}, we conclude the paper 
by describing how the star operation on semistandard Young tableaux 
gives rise to various crystal embeddings. 


\section*{Acknowledgments}

The authors thank the organizers of the 2025 Collaborative Workshop in 
Algebraic Combinatorics at the Institute for Advanced Study in Princeton, 
where this project began. 
We also thank Lara Bossinger, Hood Chatham, Bea de Laporte, 
Masato Kobayashi, Peter Littelmann, Peter McNamara, 
Kaveh Mousavand, Jos{\'e} Simental, Bernd Sturmfels, Frank Sottile, 
and Jerzy Weyman for helpful conversations. 
NG, BM, PSO, and JP were supported by the National Science Foundation 
under grant DMS-1929284, while 
at the Institute for Computational and Experimental Research 
in Mathematics in Providence, during the Categorification and Computation 
in Algebraic Combinatorics program. 
BM was supported by Narodowe Centrum Nauki, 
grants 2021/43/D/ST1/02290 and 2021/42/E/ST1/00162. 
PSO was supported by JSPS KAKENHI Grant Number JP25K17242. 
JT was supported by Narodowe Centrum Nauki, grant 2021/43/D/ST1/02290 and ID.UJ. 


\section{The tableaux algebra}\label{sec:algebra}

\subsection{Preliminaries} 

Henceforth, fix $\fk$ to be an algebraically closed field of characteristic zero. 

A \newword{partition} of height $n$ is a tuple of nonnegative integers 
$\lambda = (\lambda_1,\dots,\lambda_n)$ such that $\lambda_i\geq \lambda_{i+1}$ for all 
$1 \leq i \leq n$, with $|\lambda| \coloneqq \sum_i \lambda_i$. 
As usual, we identify a partition with its Ferrers diagram 
(in English notation) of left justified boxes (or cells), whose $i^{th}$ row consists of 
$\lambda_i$ boxes. 
A \newword{semistandard Young tableaux of shape $\lambda$} is a filling of 
the diagram of $\lambda$ with positive integers that is strictly increasing 
down the columns, and weakly increasing right along the rows. 
We denote by $\SSYT_{m}^{n}$ the set of all semistandard Young tableaux of 
all possible partition shapes with at most $n$ rows and entries in $\{1,\ldots, m\}$, 
and by $\SSYT_{m}(\lambda)$ the set of all semistandard fillings of tableaux 
of fixed partition shape $\lambda$ with entries in $\{1,\ldots, m\}$. 
When the entries in a semistandard tableau of shape $\lambda$ are in bijection with $\{1,\dots,|\lambda|\}$ 
we call the tableau \newword{standard}. 
When no restrictions are imposed on the length of $\lambda$ or the maximal value 
of the fillings then we simply omit the corresponding label. 

\begin{definition}\label{staroperation}
    Given any semistandard Young tableaux $T$ of shape $\lambda$ 
    and $T'$ of shape $\mu$, define the \newword{star product} $T \star T'$ to be 
    the semistandard Young tableaux of shape $\lambda+\mu$ obtained by 
    horizontally concatenating the rows and then sorting the entries in each row 
    in weakly increasing order from left to right. 
\end{definition}

\begin{proposition}\label{prop:monoid}
    Let $m, n \in \mathbb{N}$ with $m \geq n$. 
    The triple $(\SSYT_m^n, \star, \emptyset)$ is a multigraded, 
    cancellative, commutative, reduced, torsion-free monoid, 
    generated by the set of all columns
    \[\G{n}{m} \coloneqq \{ T \in \SSYT_{m}^{n}(1^k), \; 1\leq k \leq n\}.\] 
\end{proposition}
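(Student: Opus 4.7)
The plan is to verify the monoid axioms first, then each algebraic property, and finally the generation statement. The key underlying observation is that $T\star T'$ has shape $\lambda + \mu$ and its $i$-th row is the sorted multiset union of the $i$-th rows of $T$ and $T'$; most properties will reduce to combining these facts row by row.

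First I would check that $\star$ is well-defined, i.e., that $T\star T'$ is indeed a semistandard tableau with at most $n$ rows and entries in $\{1,\dots,m\}$. The entry and height constraints are immediate, and the weak increase along rows holds by construction. The one subtle point is column strictness. Here I would use the standard characterization that two multisets $X,Y$ (of entries of rows $i$ and $i+1$) satisfy $X[j]<Y[j]$ after sorting if and only if $|\{x\in X: x<a\}|\geq |\{y\in Y: y\leq a\}|$ for every $a$. Since this inequality is additive under multiset union, the column-strictness of $T$ and of $T'$ combine to give column-strictness of $T\star T'$. Associativity and commutativity of $\star$ then follow from the associativity and commutativity of multiset union on each row (the sort being a canonical representative), and $\emptyset$ is manifestly a two-sided identity.

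Next I would deduce the four algebraic properties directly from the row-by-row description. For cancellativity, $T\star T_1=T\star T_2$ forces $\lambda+\lambda_1=\lambda+\lambda_2$, hence $\lambda_1=\lambda_2$; and for each $i$, the multiset union of row $i$ of $T$ with row $i$ of $T_j$ is the same multiset, so row $i$ of $T_1$ equals row $i$ of $T_2$ as multisets, and thus $T_1=T_2$ since an SSYT is determined by its row multisets. For reducedness (only trivial unit), any $T\star T'=\emptyset$ forces $\lambda+\mu=0$, so $\lambda=\mu=0$. Torsion-freeness is the same argument with $k\lambda_1=k\lambda_2$ and $k$-fold scaled multisets.

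Finally, for the generation claim, for an SSYT $T$ of shape $\lambda$ I let $C_1,\dots,C_{\lambda_1}$ be its successive columns, each viewed as an element of $\G{n}{m}$. A one-line check shows $C_1\star\cdots\star C_{\lambda_1}=T$: the star product has shape $\lambda$, and each row of the product is exactly the sorted multiset of column entries in that row of $T$, which is the original row of $T$ by semistandardness.

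I expect the only genuinely nontrivial step to be the column-strictness check in well-definedness; everything else is a routine extraction from the row-wise description of $\star$.
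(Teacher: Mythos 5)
Your proposal is correct, and it is more self-contained than what the paper actually does. The paper's proof of Proposition~\ref{prop:monoid} dispatches cancellativity and commutativity by citation (to \cite[Lemma 3.2]{li:canonical}), proves reducedness by noting that $\star$ weakly increases the number of columns, and handles torsion-freeness with a brief contrapositive ("distinct tableaux differ in at least one entry, so their iterated star powers do too"); it does not verify well-definedness of $\star$ inside this proof, and the generation claim is only argued later, in Theorem~\ref{thm:finite generation}, via the observation that a product of two columns has more than one column. You instead derive everything from the row-multiset (equivalently, row-weight additivity $\wt^{(i)}(T\star T')=\wt^{(i)}(T)+\wt^{(i)}(T')$, which the paper records only as a remark) description: your counting criterion for column-strictness of sorted multisets, and its additivity under multiset union, is a correct and genuinely useful addition, since well-definedness of $\star$ is the one point the paper leaves implicit; your cancellativity and torsion-freeness arguments (cancel or divide multiplicities in each row multiset, then note an element of $\SSYT_m^n$ is determined by its row multisets) are cleaner and more precise than the paper's terse entry-comparison, and your column-factorization argument for generation by $\G{n}{m}$ matches the content of Theorem~\ref{thm:finite generation}. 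The trade-off is simply length: the paper buys brevity by outsourcing and by deferring generation, while your route buys a complete, elementary verification with no external input.
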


\begin{proof}
    The multigrading is obvious. 
    Cancellative and commutative are immediate as 
    in~\cite[Lemma 3.2]{li:canonical}. 
    Since the star product weakly increases the number of columns, 
    $\emptyset$ is the only invertible element, so the monoid is reduced. 
    Torsion-free is easily checked by contrapositive, because two 
    distinct tableaux differ in at least one entry, hence their 
    successive star products will also differ in at least that same 
    entry. 
\end{proof}

\begin{definition}\label{def:rowweight}
    Given any $T \in \SSYT_m^n$, for each $1\leq i \leq n$ define the 
    \newword{row weights} $\wt^{(i)} (T)$ of $T$ to be the tuples 
    \[
        \wt^{(i)} (T)= \left(\wt_1^{(i)}(T), \dots,\wt_m^{(i)}(T)\right) \in \mathbb{Z}^m 
    \]
    where each $\wt_j^{(i)}(T)\in \mathbb{Z}_{\geq 0}$ counts 
    the number of $j$'s that appear in the $i^{th}$ row of $T$. 
\end{definition}

\begin{definition}\label{def:colreadingword}
    For $T \in \SSYT_n$ denote by $w_{col}(T)$ the
\newword{column reading word} of $T$, obtained by reading the entries of $T$ 
up the columns starting with the leftmost column and moving right. 
\end{definition}

\begin{example}
    Consider $T$ and $T'$ in $\SSYT_4^3$ below. Then, their star product $T \star T'$ is given by 
    \[T \star T'=
        \tableau{ 
            1 & 1 \\
            2 & 3 \\
            4 
        }
        \; \star \;
        \tableau{ 
            1 & 2 \\
            2 \\
            3 
        }
        \ \ = \ \  
        \tableau{ 
            1 & 1 & 1 & 2 \\
            2 & 2 & 3 \\
            3 & 4 
        }\;, 
    \]
    where $w_{col}(T)=42131$, $w_{col}(T')=3212$, $w_{col}(T \star T') = 321421312$, and 
    \begin{align*}
        \wt^{(1)}(T\star T') &= (3,1,0,0) = (2,0,0,0) + (1,1,0,0) 
        = \wt^{(1)}(T) + \wt^{(1)}( T')\\
        \wt^{(2)}(T\star T') &= (0,2,1,0) = (0,1,1,0) + (0,1,0,0) 
        = \wt^{(2)}(T) + \wt^{(2)}( T')\\
        \wt^{(3)}(T\star T') &= (0,0,1,1) = (0,0,0,1) + (0,0,1,0) 
        = \wt^{(3)}(T) + \wt^{(3)}( T').
    \end{align*}
\end{example}

\begin{remark}
    An equivalent definition of the star product $T\star T'$ is as 
    the unique semistandard Young tableaux 
    of shape $\lambda+\mu$ satisfying 
    $\wt^{(i)}(T\star T') = \wt^{(i)}(T)+\wt^{(i)}(T')$ for all $i$. 
\end{remark}

\begin{definition}\label{def:tableaux-monoid-algebra}
    Let $m, n \in \mathbb{N}$ with $m\geq n$. We define the 
    \newword{tableaux algebra} $\A{n}{m}$ as the monoid algebra of $\SSYT_m^n$. 
\end{definition}

Since $\SSYT_m^n$ is commutative, 
it is immediate that $\A{n}{m}$ is also commutative. 

\begin{remark}
    An equivalent definition for $\A{n}{m}$ is as the commutative, associative, unital $\fk$-algebra 
    generated by the set of tableaux $T \in \SSYT_m^n$ taken under formal sums, 
    where multiplication is given by $\star$ and the identity element is given by the empty tableau $\emptyset$. 
\end{remark}

The family of algebras $\A{n}{m}$ with $n$ and $m$ ranging over all admissible 
natural numbers admits filtrations corresponding to 
the inclusions $\A{n}{m} \hookrightarrow  \A{n+1}{m}$ 
and $\A{n}{m} \hookrightarrow \A{n}{m+1}$, 
and projections $\A{n}{m} \twoheadrightarrow \A{n-1}{m}$ 
and $\A{n}{m} \twoheadrightarrow \A{n}{m-1}$,
visualized below. 
\[
    \begin{tikzcd}
        \A{1}{1} \arrow[r,shift left,hookrightarrow]
        & 
        \A{1}{2} \arrow[r,shift left ,dotted,hookrightarrow] \arrow[l,shift left,twoheadrightarrow]
        \arrow[d,hookrightarrow,shift left] 
        & 
        \A{1}{n} \arrow[l,shift left, dotted,twoheadrightarrow]\arrow[r, shift left, dotted,hookrightarrow]  \arrow[d,hookrightarrow,shift left] 
        & 
        \A{1}{m} \arrow[l,shift left, dotted,twoheadrightarrow] \arrow[r,shift left,hookrightarrow]  \arrow[d,hookrightarrow,shift left]
        & 
        \A{1}{m+1} \arrow[l,shift left,twoheadrightarrow] \arrow[r,shift left,dotted,hookrightarrow]  \arrow[d,hookrightarrow,shift left]
        & \arrow[l,shift left,dotted,twoheadrightarrow] {} 
        \\
        {} 
        & \A{2}{2} \arrow[r,shift left ,dotted,hookrightarrow]\arrow[u,twoheadrightarrow,shift left]
        & \A{2}{n} \arrow[l,shift left,dotted,twoheadrightarrow]\arrow[r,shift left ,dotted,hookrightarrow] \arrow[u,twoheadrightarrow,shift left] \arrow[d,hookrightarrow,shift left,dotted] 
        & \A{2}{m} \arrow[l,shift left,dotted,twoheadrightarrow]\arrow[r,shift left,hookrightarrow] \arrow[u,twoheadrightarrow,shift left] \arrow[d,hookrightarrow,shift left,dotted] 
        & \A{2}{m+1} \arrow[l,shift left,twoheadrightarrow]\arrow[r,shift left ,dotted,hookrightarrow] \arrow[u,twoheadrightarrow,shift left] \arrow[d,hookrightarrow,shift left,dotted]
        & \arrow[l,shift left,dotted,twoheadrightarrow]{} 
        \\
        {}
        & {} 
        & \A{n}{n} \arrow[r,shift left ,dotted,hookrightarrow] \arrow[u,twoheadrightarrow,shift left,dotted] 
        & \A{n}{m} \arrow[l,shift left,dotted,twoheadrightarrow]\arrow[r,shift left ,hookrightarrow]  \arrow[u,twoheadrightarrow,shift left,dotted] \arrow[d,hookrightarrow,shift left] 
        & \A{n}{m+1} \arrow[l,shift left,twoheadrightarrow]\arrow[r,shift left ,dotted,hookrightarrow]  \arrow[u,twoheadrightarrow,shift left,dotted] \arrow[d,hookrightarrow,shift left]
        & \arrow[l,shift left,dotted,twoheadrightarrow]{} 
        \\
        {} 
        & {} 
        & {} 
        & \A{n+1}{m} \arrow[r,shift left,hookrightarrow] \arrow[u,twoheadrightarrow,shift left] \arrow[d,hookrightarrow,shift left,dotted]
        & \A{n+1}{m+1} \arrow[l,shift left,twoheadrightarrow]\arrow[r,shift left ,dotted,hookrightarrow]   \arrow[u,twoheadrightarrow,shift left] \arrow[d,hookrightarrow,shift left,dotted] 
        & \arrow[l,shift left,twoheadrightarrow,dotted]{} 
        \\
        {} 
        & {} 
        & {} 
        & \arrow[u,twoheadrightarrow,shift left,dotted] {} 
        & \arrow[u,twoheadrightarrow,shift left,dotted] {} 
        & {} 
    \end{tikzcd}
\]

The inclusions are straightforward. The projections are given by linearly extending the following assignment 
on tableaux. 
\begin{equation*}
    \begin{tikzcd}[row sep=0]
        \A{n}{m} \ar[r,two heads] 
        & \A{n-1}{m}\\
        T \ar[r, mapsto] 
        & \begin{cases} 
            T \text{ if } \wt_j^{(n)}(T) = 0 \ \forall j,\\
            0 \text{ otherwise}.
        \end{cases} 
    \end{tikzcd}
    \quad
    \begin{tikzcd}[row sep=0]
        \A{n}{m} \ar[r,two heads] 
        & \A{n}{m-1}\\
        T \ar[r, mapsto] 
        & \begin{cases} 
            T \text{ if } \wt_m^{(i)}(T) = 0 \ \forall i,\\
            0 \text{ otherwise}.
        \end{cases} 
    \end{tikzcd}
\end{equation*}
Although it will not play a role in this paper, it is worth mentioning that 
the inverse and direct limits of the above diagrams 
of algebra morphisms coincide. 
The resulting algebra $\mathcal{T} \coloneqq \varprojlim{\A{n}{m}} \cong 
\varinjlim{\A{n}{m}}$ is of independent interest. 

\begin{theorem}\label{thm:finite generation}
    The algebra $\A{n}{m}$ is finitely generated over $\fk$ with minimal 
    generating set given by $\G{n}{m}$, where 
    $|\G{n}{m}| = \sum_{k=1}^n  {\binom{m}{k}}$. 
\end{theorem}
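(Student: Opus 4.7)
The plan is to reduce the statement to properties of the monoid $\SSYT_m^n$ already established in Proposition~\ref{prop:monoid}, then count columns combinatorially. Since $\A{n}{m}$ is defined as the monoid algebra of $\SSYT_m^n$, any monoid generating set for $\SSYT_m^n$ automatically gives an algebra generating set for $\A{n}{m}$ (the algebra is spanned by monoid elements, each of which is a $\star$-product of generators, hence a product in the algebra). By Proposition~\ref{prop:monoid}, $\G{n}{m}$ generates $\SSYT_m^n$ as a monoid, so it generates $\A{n}{m}$ as a $\fk$-algebra.

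The main point to verify is minimality. First I would show that every column $C \in \G{n}{m}$ is an \emph{atom} of the monoid, i.e. if $C = T_1 \star T_2$ then one of $T_1, T_2$ equals $\emptyset$. This follows because the star product adds shapes, so the number of columns of $T_1 \star T_2$ equals the sum of the number of columns of $T_1$ and $T_2$; since $C$ has exactly one column, one factor must have zero columns, and by the reducedness of the monoid from Proposition~\ref{prop:monoid}, the only such element is $\emptyset$. Consequently, if $S \subseteq \SSYT_m^n$ is any monoid generating set, then $S$ must contain every atom, and therefore $\G{n}{m} \subseteq S$. In particular, no proper subset of $\G{n}{m}$ generates $\SSYT_m^n$ as a monoid.

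To deduce algebra-level minimality from monoid-level minimality, I would argue that if $\G{n}{m} \setminus \{C\}$ generated $\A{n}{m}$ as a $\fk$-algebra for some column $C$, then $C$ would lie in the $\fk$-subalgebra generated by the remaining columns. Since the monoid basis elements of $\A{n}{m}$ are linearly independent, comparing coefficients in any such expression forces $C$ to equal a monomial (a single $\star$-product of the other columns), contradicting atomicity. Hence $\G{n}{m}$ is a minimal algebra generating set.

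Finally, for the enumeration, a column of height $k$ is a strictly increasing sequence of entries from $\{1,\dots,m\}$ of length $k$, which is the same data as a $k$-element subset of $\{1,\dots,m\}$. Summing over the admissible heights $1 \leq k \leq n$ gives
\[
    |\G{n}{m}| = \sum_{k=1}^{n} \binom{m}{k},
\]
as claimed. I do not expect any serious obstacle here; the only subtle step is the passage from monoid-generating minimality to algebra-generating minimality, which hinges on the linear independence of the monoid basis inside the monoid algebra.
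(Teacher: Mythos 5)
Your proof is correct and follows essentially the same route as the paper: generation is inherited from the monoid generation by columns, minimality comes from columns being atoms (a product of nonempty tableaux has more than one column) together with linear independence of the monoid basis, and the count $\sum_{k=1}^{n}\binom{m}{k}$ comes from identifying height-$k$ columns with $k$-subsets of $\{1,\dots,m\}$. Your write-up is simply a more detailed version of the paper's argument, with the passage from monoid-level to algebra-level minimality made explicit.
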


\begin{proof}
    The monoid $\SSYT_m^n$ is generated by columns which form a minimal 
    generating set because the product of two columns has strictly more 
    than one column. 
    Since $\A{n}{m}$ 
    is the monoid algebra of $\SSYT_m^n$, so that every element in 
    $\A{n}{m}$ is a linear combination 
    in elements 
    of $\SSYT_m^n$, then the columns $\G{n}{m}$ form a minimal generating 
    set for $\A{n}{m}$. Lastly, semistandard Young tableaux are strictly 
    increasing on columns from which the enumeration follows. 
\end{proof}

\subsection{Algebraic structure}

The tableaux algebra inherits many desirable properties 
by virtue of Theorem~\ref{thm:finite generation}. 

\begin{proposition}\label{prop:IntegralDomain-Noetherian-Reduced-Jacobson}
    For any positive integers $m\geq n$, the tableaux algebra $\A{n}{m}$ is 
    an integral domain. Moreover, as a ring $\A{n}{m}$ is also 
    Noetherian, reduced, and Jacobson. 
\end{proposition}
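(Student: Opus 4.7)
The plan is to deduce each of the four properties from standard facts about monoid algebras and finitely generated algebras over a field, leveraging what has already been established about $\SSYT_m^n$ and $\A{n}{m}$.

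First I would prove that $\A{n}{m}$ is an integral domain. By Proposition~\ref{prop:monoid}, the monoid $\SSYT_m^n$ is commutative, cancellative and torsion-free. A standard argument shows the monoid algebra over a field of any such monoid is a domain: one embeds the monoid into its Grothendieck group $G$, which is torsion-free abelian (cancellativity gives the embedding, torsion-freeness of the monoid forces torsion-freeness of $G$). The inclusion $\SSYT_m^n \hookrightarrow G$ induces an injective $\fk$-algebra map $\A{n}{m} \hookrightarrow \fk[G]$, and since $G$ is torsion-free abelian it admits a total order compatible with multiplication, so $\fk[G]$ is a domain. Hence so is its subalgebra $\A{n}{m}$.

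Next, Noetherianity follows directly from Theorem~\ref{thm:finite generation}: since $\A{n}{m}$ is finitely generated as a $\fk$-algebra by $\G{n}{m}$, it is a quotient of a polynomial ring $\fk[x_C : C \in \G{n}{m}]$ in finitely many variables, which is Noetherian by Hilbert's basis theorem, and quotients of Noetherian rings are Noetherian. Reducedness is then immediate, since any integral domain has trivial nilradical; alternatively one could observe that the monoid algebra of a torsion-free commutative monoid is reduced. Finally, the Jacobson property is a general consequence of finite generation over a field: by the standard form of the Nullstellensatz (or Goldman's theorem), any finitely generated algebra over a Jacobson ring is Jacobson, and every field is Jacobson, so $\A{n}{m}$ is Jacobson.

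None of the four steps presents a real obstacle; the only part requiring a brief justification beyond citation is the passage from a cancellative torsion-free commutative monoid to a domain monoid algebra, which is the short Grothendieck-group argument sketched above. Everything else is an invocation of results already in the excerpt (Proposition~\ref{prop:monoid} and Theorem~\ref{thm:finite generation}) combined with classical commutative algebra.
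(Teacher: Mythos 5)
Your proposal is correct and follows essentially the same route as the paper: domain from the cancellative, commutative, torsion-free structure of $\SSYT_m^n$ (the paper cites Gilmer's theorem on monoid algebras, whereas you unwind its standard proof via the Grothendieck group and an ordering argument), reducedness as an immediate consequence, Noetherianity from finite generation and Hilbert's basis theorem, and the Jacobson property from the general Nullstellensatz for finitely generated algebras over a Jacobson ring.
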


\begin{proof}
    Recall that $\SSYT_m^n$ is a cancellative, commutative, and torsion-free monoid 
    by Proposition~\ref{prop:monoid}. Thus, its monoid algebra $\A{n}{m}$ is an 
    integral domain by~\cite[Theorem 8.1]{gilmer:semigroup}. 
    As an immediate consequence, $\A{n}{m}$ is indeed reduced. 
    Since $\A{n}{m}$ is finitely generated by Theorem~\ref{thm:finite generation}, 
    it is a quotient of a polynomial ring, which is Noetherian. 
    As a quotient of a Noetherian ring, 
    $\A{n}{m}$ is also Noetherian. 
    Lastly, since $\A{n}{m}$ is finitely generated over a field 
    by Theorem~\ref{thm:finite generation}, and any field is a Jacobson ring, 
    recalling that any finitely generated ring over a Jacobson ring is itself 
    Jacobson yields that $\A{n}{m}$ is Jacobson. 
\end{proof}

In particular, the fact that $\A{n}{m}$ is Jacobson implies that all prime ideals arise 
as intersections of maximal ideals. More precisely, a prime ideal coincides with the 
intersection of the maximal ideals containing it. 
We will exploit this in Section~\ref{sec:prime-ideals-tableau-algebra}. 

\begin{corollary}\label{cor:jacobson}
    The Jacobson radical of $\A{n}{m}$ is zero.
\end{corollary}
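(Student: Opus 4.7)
The plan is to combine the two properties of $\A{n}{m}$ established just above in Proposition~\ref{prop:IntegralDomain-Noetherian-Reduced-Jacobson}: that it is a Jacobson ring and that it is reduced. The Jacobson radical of a commutative ring is by definition the intersection of all maximal ideals, while the nilradical is the intersection of all prime ideals; the nilradical is always contained in the Jacobson radical.

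The key observation I would use is the standard characterization of Jacobson rings: in a commutative Jacobson ring, every prime ideal is an intersection of maximal ideals, so the intersection of all prime ideals coincides with the intersection of all maximal ideals. In other words, the nilradical agrees with the Jacobson radical. Applying this to $\A{n}{m}$ (which is Jacobson by the preceding proposition), I conclude that the Jacobson radical of $\A{n}{m}$ equals its nilradical.

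To finish, I would invoke that $\A{n}{m}$ is reduced (again by the preceding proposition, since as a monoid algebra over a domain on a cancellative, commutative, torsion-free monoid it is an integral domain and therefore has no nonzero nilpotents). Hence the nilradical is zero, and by the previous paragraph so is the Jacobson radical.

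There is no real obstacle here: the corollary is a clean application of two facts already in hand. The only thing worth being careful about is not confusing ``Jacobson ring'' (a property of the ring) with ``Jacobson radical'' (an ideal); the short argument really is just that in a Jacobson ring the nilradical equals the Jacobson radical, and reducedness kills the nilradical.
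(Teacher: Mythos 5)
Your proof is correct and follows essentially the same route as the paper: both identify the Jacobson radical with the nilradical and then use reducedness to conclude it vanishes. The only cosmetic difference is that you invoke the Jacobson-ring property recorded in Proposition~\ref{prop:IntegralDomain-Noetherian-Reduced-Jacobson} directly, whereas the paper re-derives the equality of the two radicals from finite generation over the field; these amount to the same underlying fact.
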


\begin{proof}
    Recall that any commutative and finitely generated algebra over a field 
    has Jacobson radical equal to its nilradical. 
    Since $\A{n}{m}$ is finitely generated over a field by 
    Theorem~\ref{thm:finite generation}, and it has nilradical equal to zero 
    because it is reduced by 
    Proposition~\ref{prop:IntegralDomain-Noetherian-Reduced-Jacobson}, 
    the result follows. 
\end{proof}

The tableaux algebra has a curious behavior. 
Having zero Jacobson radical, $\A{n}{m}$ is semiprimitive. 
Being commutative and an integral domain but not a field, 
$\A{n}{m}$ is not primitive and not Artinian, respectively. 
Thus, as a non-Artinian semiprimitive ring, $\A{n}{m}$ is not semisimple. 
However, being semiprimitive, we should still be able to understand it in terms 
of its irreducible modules. We will discuss this in detail in Section \ref{sec:rep}.  

\subsection*{Total ordering on tableaux} 

We now define a total order on the set $\SSYT_m^n$ by first comparing 
the shapes of the partitions and then comparing their entries. 
Given any column tableau $C$ of shape $(1^k)$, let $\height(C)=k$ be 
the height of $C$. For any two column tableaux $C$ and $C'$, we say that 
$C$ is less than $C'$ if either $\height(C)<\height(C')$, 
or $\height(C)=\height(C')$ and $w_{col}(C)<w_{col}(C')$ in lexicographic order on words. 
Two columns are equal if and only if they have the same height and column reading word. 
So then, given any two tableaux $T$ of shape $\lambda=(\lambda_1, \dots, \lambda_r)$ 
with columns $C_i$ and $T'$ of shape $\mu=(\mu_1,\dots,\mu_s)$ with columns $C_i'$, 
we say that $T$ is less than $T'$ whenever 
\begin{enumerate}
    \item $\lambda_1<\mu_1$, or 
    \item $\lambda_1=\mu_1$ and there exist a $j$ such that 
        $C_i=C_i'$ for all $i < j$ and $C_j$ is less than $C_j'$. 
\end{enumerate}

We denote that $T$ is less than or equal to $T'$ by $T \preceq T'$. 
It is straightforward to check that this is indeed a total order. 
The lexicographic order induced by $\preceq$ 
on tuples of elements of $\SSYT_m^n$ will also be denoted by $\preceq$, 
as per the usual abuse of notation. 

\begin{remark}\label{rem:roworder}
    We observe that an analogous total order on tableaux can be defined by 
    comparing the rows instead of the columns of the tableaux. 
    In particular, as described in Remark~\ref{rema:compatible-total-orders}, 
    this row order is compatible with the 
    upcoming interpretation of the tableaux algebra as a subalgebra 
    of a polynomial ring. Translating the column order from tableaux 
    to polynomials is more involved, but as it is more natural 
    given our presentation of $\A{n}{m}$, we choose to use it instead. 
\end{remark}

Recall that an algebra $A$ is \newword{quadratic} when it 
is finitely generated and all its relations are homogeneous of degree two. 
That is, $A$ is of the form $T(V)/\langle R \rangle$ where 
$V$ is a vector space and $R  \subseteq V \otimes V$. 

With this in mind, we set 
$\tensor[_{n\hspace{-1mm}}]{V}{_m} \coloneqq \Span_{\fk}\{ T \in \G{n}{m}\}$, 
with $\G{n}{m}$ as in Theorem \ref{thm:finite generation}. 

\begin{theorem}\label{thm:quadratic}
    The algebra $\A{n}{m}$ is quadratic. 
\end{theorem}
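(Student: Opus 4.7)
The plan is to realize $\A{n}{m}$ as an explicit quotient of the polynomial ring $S(\tensor[_n]{V}{_m})$ and to show that a natural family of quadratic binomials already generates the defining kernel, via a standard monomial normal form argument in the spirit of Hodge--Hibi.

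By Theorem~\ref{thm:finite generation}, there is a surjection $\pi \colon S(\tensor[_n]{V}{_m}) \twoheadrightarrow \A{n}{m}$ sending $x_C \mapsto C$. For any two columns $C, C' \in \G{n}{m}$ the star product $C \star C'$ is a tableau of at most two columns, whose left and right columns I denote $L(C,C')$ and $R(C,C')$; row-wise these are the min and max of the corresponding entries, padded by the extra entries of the taller column. Let $J \subseteq \R{n}{m}$ be the ideal generated by the quadratic binomials $f_{C,C'} \coloneqq x_C x_{C'} - x_{L(C,C')} x_{R(C,C')}$. Note that $f_{C,C'}$ vanishes exactly when $(C, C')$ already forms a legal adjacent column pair in a tableau, so only ``incomparable'' pairs contribute nontrivial relations.

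I would then show $J = \R{n}{m}$ by a standard monomial argument. Call $x_{C_1} \cdots x_{C_\ell}$ a \emph{standard monomial} if $C_1,\ldots,C_\ell$ are the columns of some $T \in \SSYT_m^n$ read left to right; these are in evident bijection with $\SSYT_m^n$ via $\pi$. To prove standard monomials span $S(\tensor[_n]{V}{_m})/J$, I would set up a rewriting rule that replaces any adjacent pair $x_{C_i} x_{C_{i+1}}$ failing to be a legal column pair by $x_{L(C_i,C_{i+1})} x_{R(C_i,C_{i+1})}$ modulo $J$. Since the image under $\pi$ is preserved by each rewrite, the terminal normal form is forced to be the unique standard monomial mapping to the same tableau. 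Standard monomials are linearly independent in $\A{n}{m}$ because they map bijectively to the basis $\SSYT_m^n$ of the monoid algebra, so combining with the spanning step, the surjection $S(\tensor[_n]{V}{_m})/J \twoheadrightarrow \A{n}{m}$ is forced to be an isomorphism and hence $J = \R{n}{m}$.

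The main obstacle I anticipate is termination of the rewriting: one must verify that the substitution $(C,C') \mapsto (L(C,C'), R(C,C'))$ strictly decreases a well-founded invariant on the tuple of columns. This is precisely why the column order $\preceq$ was introduced immediately before the statement; the natural candidate is the induced lexicographic order on column tuples, or a suitable multiset refinement thereof. Once termination is established, confluence is automatic since every rewrite preserves the image tableau, so all terminal forms for a fixed input monomial must coincide. With the quadratic ideal $J$ generating $\R{n}{m}$, this concludes that $\A{n}{m}$ is quadratic.
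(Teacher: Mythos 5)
Your strategy is sound and in fact carries more content than the paper's own argument. The printed proof of Theorem~\ref{thm:quadratic} is a two-line observation: it writes $\A{n}{m}$ as a quotient of the tensor algebra on the column generators and asserts that the defining relations are commutators and binomials $T_1\otimes T_2-T_3\otimes T_4$, hence homogeneous of degree two; the substantive point --- that these degree-two binomials generate the \emph{entire} kernel of $\fk[\G{n}{m}]\twoheadrightarrow\A{n}{m}$ --- is treated as immediate ``by definition.'' Your proposal proves exactly that point: standard monomials (column multisets of a tableau) span $\fk[\G{n}{m}]/J$ by rewriting, they map bijectively onto the monoid basis $\SSYT_m^n$, and hence $\fk[\G{n}{m}]/J\cong\A{n}{m}$, so the kernel is quadratically generated. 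This is essentially the straightening law for the Hibi ring of the distributive lattice $\G{n}{m}$, a structure the paper only makes explicit later (Lemma~\ref{lem:initial-ideal}; see also Corollary~\ref{cor:tensorprod} and Remark~\ref{rema:koszul-cm-knowledge}). So your route is genuinely different from, and more self-contained than, the paper's; what it buys is an honest verification of the presentation that the paper in effect absorbs into the definition of $\R{n}{m}$.

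The one step you leave open, termination of the rewriting, does require a different invariant from the one you name. Lexicographic comparison of the sorted column tuple in the order $\preceq$ is not monotone under the rewrite: when the two columns have equal height they are replaced by their entrywise min and max, so the larger member of the pair strictly increases; when the heights differ, the taller column shrinks but the shorter one grows (for instance the pair consisting of the single box $1$ and the column with entries $2,3$ rewrites to the column with entries $1,3$ together with the single box $2$). Thus neither end of the sorted tuple moves in a fixed direction across the two cases, and no single lex comparison of tuples obviously decreases. A clean fix: $\G{n}{m}$ with $C\wedge C'=L_{C,C'}$ and $C\vee C'=R_{C,C'}$ is a finite distributive (hence graded and modular) lattice, as recorded in Section~\ref{sec:toric}, so its rank function $\rho$ satisfies $\rho(L_{C,C'})+\rho(R_{C,C'})=\rho(C)+\rho(C')$ while $\rho(R_{C,C'})>\max(\rho(C),\rho(C'))$ for incomparable $C,C'$; consequently $\sum_i\rho(C_i)^2$ strictly increases under each rewrite and is bounded (the degree is fixed), so the rewriting terminates. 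With that invariant in place (or by invoking the straightening law for Hibi rings directly), your argument is complete; confluence then follows as you say, since each rewrite preserves the image tableau and standard monomials are separated by $\pi$.
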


\begin{proof}
    Clearly $\A{n}{m} = T(\ \tensor[_{n\hspace{-1mm}}]{V}{_m})/\langle R \rangle$, where 
    $T(\ \tensor[_{n\hspace{-1mm}}]{V}{_m}) = \bigoplus_{k\geq 0} (\ \tensor[_{n\hspace{-1mm}}]{V}{_m})^{\otimes k}$ 
    is the tensor algebra and $\langle R \rangle$ is a set of relations. 
    The relations in $R$ are of the form $T_1 \otimes T_2 - T_2 \otimes T_1$ and 
    $T_1 \otimes T_2 - T_3 \otimes T_4$ for some distinct 
    $T_i \in \tensor[_{n\hspace{-1mm}}]{V}{_m}$ 
    by definition. In particular, 
    all the relations in $R$ are homogeneous of degree two, 
    so $\A{n}{m}$ is quadratic. 
\end{proof}

Although all Koszul algebras are quadratic, the converse is not true. 
However, our quadratic tableaux algebra is indeed Koszul. 

\begin{corollary}\label{cor:koszul}
    The algebra $\A{n}{m}$ is Koszul. 
\end{corollary}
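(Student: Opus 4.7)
The plan is to upgrade the quadratic presentation established in Theorem~\ref{thm:quadratic} to a Koszul one by exhibiting a quadratic Gröbner basis for the defining ideal $\R{n}{m}$. Recall the classical criterion (due essentially to Fröberg, see also Priddy's theorem on PBW algebras): if a commutative quadratic algebra $A = \fk[x_1,\ldots,x_N]/I$ admits a Gröbner basis of $I$ consisting of quadratic elements with respect to some monomial order, then $A$ is Koszul. So it suffices to produce such a Gröbner basis for the presentation of $\A{n}{m}$ by column tableaux.

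First, I would invoke (or, depending on the placement in the paper, forward-reference) the enumeration of generators of $\R{n}{m}$ obtained in Theorem~\ref{thm:enumeration} and Corollary~\ref{cor:enumeration}, which identifies the defining relations as the straightening (or incomparable-pair) relations coming from the poset of column tableaux. Combined with the Hibi algebra realization announced in the introduction, the algebra $\A{n}{m}$ is the Hibi ring on this poset of columns (under the partial order induced by entrywise comparison). Hibi rings on finite posets are classically known to be Koszul: their defining ideal is generated by the binomials $C \cdot C' - (C \wedge C') \cdot (C \vee C')$ for pairs of incomparable elements, and these binomials form a quadratic Gröbner basis with respect to any monomial order compatible with the poset (with leading term $C \cdot C'$). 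Applied to our total order $\preceq$ introduced just above Theorem~\ref{thm:quadratic} (or, more precisely, the induced lexicographic order on pairs of columns), this yields exactly the Gröbner basis asserted in Corollary~\ref{cor:toricdegeneration}.

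Thus the concrete execution is: (i) verify that the total order $\preceq$ on columns refines the poset order used in the Hibi description; (ii) check that the straightening relations in $\R{n}{m}$ have leading term equal to the product of the two incomparable columns, so that S-polynomials reduce to zero by the standard Hibi computation; (iii) conclude via Fröberg's theorem that $\A{n}{m}$ is Koszul.

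The main obstacle is the bookkeeping for the case $m > n$, where the column tableaux of maximal height $n$ contribute the ``free'' polynomial factor identified in Corollary~\ref{cor:tensorprod}. Tensoring with a polynomial ring preserves Koszulity (the Koszul property is stable under tensor products of connected graded $\fk$-algebras), so this factor does not cause trouble, but one must be careful that the monomial order still makes the straightening relations into a Gröbner basis on the nose and that no additional quadratic relations appear beyond those predicted by the Hibi/poset description. Once this compatibility is verified, Koszulity follows immediately from the quadratic Gröbner basis criterion.
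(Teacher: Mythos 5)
Your proposal is correct in substance but follows a genuinely different route from the paper. The paper's own proof is short, elementary, and self-contained: using the total order $\preceq$ on $\G{n}{m}$, it observes that the commutation and product relations can be written with a consistent choice of leading pairs, so that $\A{n}{m}$ is a PBW algebra, and Koszulity follows from \cite[Theorem 4.3.1]{polishchuk2005}; no Gr{\"o}bner basis for $\R{n}{m}$ and no Hibi-ring description are needed at that point. Your argument instead passes through the Hibi realization of $\A{n}{m}$ (Lemma~\ref{lem:initial-ideal}) and the fact that the straightening binomials $\BB{n}{m}$ form a quadratic Gr{\"o}bner basis (Corollary~\ref{cor:toricdegeneration}), then applies Fr{\"o}berg's $G$-quadratic criterion and the stability of Koszulity under tensoring with the polynomial factor $\fF{n}{m}$ from Corollary~\ref{cor:tensorprod}. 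This is exactly the alternative the paper acknowledges in Remarks~\ref{rema:koszul-combinatorial} and~\ref{rema:koszul-cm-knowledge}; it is logically sound (the Section~\ref{sec:toric} results you forward-reference do not depend on Koszulity, so there is no circularity), but it buys the statement at the cost of heavier machinery and a forward reference, whereas the paper's proof works immediately after Theorem~\ref{thm:quadratic}. Two small cautions: the claim that the Hibi binomials are a Gr{\"o}bner basis ``with respect to any monomial order compatible with the poset'' is an overstatement --- the classical argument uses a reverse lexicographic order induced by a linear extension, and a plain lex extension would select $(C\wedge C')(C\vee C')$ as the leading term instead of $C\cdot C'$; and the paper's total order $\preceq$ puts short columns first, so it is a linear extension of the \emph{reverse} of the Hibi poset order of Section~\ref{sec:toric}, which swaps the roles of meet and join in your step (i). You flag both verifications, so these are bookkeeping issues rather than gaps, but they must be carried out with the correct choice of order for Fr{\"o}berg's criterion to apply.
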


\begin{proof}
    Within $\A{n}{m}$ the relations become the following. 
    \begin{align} 
        T_1 \star T_2 - T_2 \star T_1 
        &= 0 
        \qquad \label{eq:comm-relation}\text{(commutation relation)}\\ 
        T_1 \star T_2 - T_3 \star T_4 
        &= 0 
        \qquad \label{eq:prod-relation}\text{(product relation)} 
    \end{align}
    Since elements in $\G{n}{m}$ admit a total order, the algebra admits a 
    presentation of the relations above where $(T_1,T_2) \succ (T_2,T_1)$, 
    $(T_3,T_4) \succ (T_4,T_3)$, and $(T_1,T_2) \succ (T_3,T_4)$ for any 
    $T_i \in \G{n}{m}$ that satisfy them. 
    Since the generators $\G{n}{m}$ of $\A{n}{m}$ are linearly independent, 
    this means $\A{n}{m}$ is a PBW algebra, and thus Koszul by 
    \cite[Theorem 4.3.1]{polishchuk2005}. 
\end{proof}

\begin{remark}\label{rema:koszul-combinatorial}
    This is far from the only available proof of the Koszulity of $\A{n}{m}$, 
    as once we establish Theorem~\ref{thm:Polynomial-Iso} we could appeal 
    to \cite[Corollary 4.1]{hrw1998koszul}, for example. 
    Nonetheless, a significant advantage of our proof 
    is its combinatorial and elementary nature. 
\end{remark}

Every element in $T \in \G{n}{m}$ satisfies equation~\eqref{eq:comm-relation} 
for any choice of column $T'$ because $\A{n}{m}$ is commutative. 
However, as we will see, not every  $T \in \G{n}{m}$ satisfies a relation 
of the form~\eqref{eq:prod-relation}. 
In fact, it will be very useful to give a precise description of which elements 
of $\G{n}{m}$ satisfy~\eqref{eq:comm-relation} but not~\eqref{eq:prod-relation}. 
The following definition has that in mind. 

\begin{definition}\label{def:vector-space-quadratic}
    Let $m, n \in \mathbb{N}$ with $m\geq n$. Set 
    \[\E{n}{m} \coloneqq \G{n}{m} \setminus \F{n}{m},\]
    where $\F{n}{m}$ is one of the following subsets of $\G{n}{m}$: 
    \begin{equation*} 
        \F{n}{n} \coloneqq 
        \left\lbrace \;
        \hackcenter{
            \tableau{n} \; , 
            \tableau{
                1\\ 
                2\\ 
                \vdots\vspace{2mm}\\
                {\text{\small{$n$--1}}}
            } \; , 
            \tableau{
                1\\ 
                2\\
                \vdots\vspace{2mm}\\
                {\text{\small{$n$--1}}}\\ 
                n
            }
        } \;
        \right\rbrace \; (m = n), 
        \qquad 
        \F{n}{m} \coloneqq 
        \left\lbrace \; 
        \hackcenter{ 
            \tableau{m} \; , 
            \tableau{
                1\\ 
                2\\ 
                \vdots\vspace{2mm}\\ 
                {\text{\small{$n$--1}}}\\ 
                n
            }
        } \;
        \right\rbrace \;(m \neq n). 
    \end{equation*}
\end{definition}

\begin{lemma}\label{lem:prod-ideal}
    A column $T \in \A{n}{m}$ satisfies a product relation 
    of the form \eqref{eq:prod-relation} with some distinct 
    $T',T'',T''' \in \G{n}{m}$ if and only if $T \in \E{n}{m}$. 
\end{lemma}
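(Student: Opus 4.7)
The plan is to analyze nontrivial product relations combinatorially via the decompositions of the tableau $U := T \star T'$ into products of two columns. For $T = (c_1, \ldots, c_h)$ and $T' = (a_1, \ldots, a_k)$ with $h \geq k$ (without loss of generality), $U$ has shape $(2^k, 1^{h-k})$, with row $i \leq k$ containing $\{\min(c_i, a_i), \max(c_i, a_i)\}$ and row $i > k$ containing only $c_i$. A decomposition $U = T'' \star T'''$ corresponds to assigning the two entries in each two-entry row of $U$ to a left or right column, subject to both resulting columns being strictly increasing. The lemma then reduces to proving that $T \in \F{n}{m}$ if and only if for every $T' \in \G{n}{m}$, the only valid assignment produces $\{T'', T'''\} = \{T, T'\}$ as a multiset.

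For the forward direction, I verify case-by-case that each column in $\F{n}{m}$ rigidly forces the canonical decomposition. If $T = \tableau{m}$, the only two-entry row of $U$ has entries $(a_1, m)$; any swap would place $m$ atop the left column, violating strict increase since $a_2 \leq m$. If $T = (1, 2, \ldots, n)$, a swap at row $i$ would place $a_i$ in position $i$ of the left column, but strict increase forces $a_i < i+1$, which combined with $a_i \geq i$ forces $a_i = i$, making the swap trivial. The case $T = (1, 2, \ldots, n-1)$ with $m = n$ reduces to the preceding analysis for $T'$ of height at most $n-1$; the only remaining option $T' = (1, 2, \ldots, n)$ produces a tableau whose two-entry rows all contain two equal entries, again admitting only the canonical decomposition.

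For the backward direction, I construct for each $T \in \E{n}{m}$ an explicit $T'$ and a decomposition distinct from $\{T, T'\}$. When $T = (c)$ has height one with $c < m$, I take $T' = (1, c+1)$ for $c \geq 2$ (and $T' = (2, 3)$ for $c = 1$), and exhibit a valid swap at row $1$ producing new columns. When $T = (c_1, \ldots, c_h)$ has height $h \geq 2$ with $c_1 \geq 2$, I take $T' = (1)$ so that the canonical decomposition itself has columns $(1, c_2, \ldots, c_h)$ and $(c_1)$, already distinct from $\{T, T'\}$. When $c_1 = 1$ and $T \neq (1, 2, \ldots, h)$, I pick $j$ minimal with $c_j > j$ and take $T' = (1, 2, \ldots, j)$; and when $T = (1, 2, \ldots, h)$ lies in $\E{n}{m}$, I take $T' = (2, 3, \ldots, h+2)$, which is a valid column precisely when $T \notin \F{n}{m}$.

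The main obstacle is the delicate subcase of columns $T = (1, 2, \ldots, h-1, c_h)$ with $c_h > h$, which sit ``close'' to the exceptional ones: the natural attempt $T' = (1, 2, \ldots, h)$ produces a canonical decomposition equal to $\{T, T'\}$ with no nontrivial swap available, so one must instead choose $T'$ differing from $T$ in two positions (e.g., $T' = (1, 2, \ldots, h-2, c_h - 1, c_h + 1)$ when $c_h < m$, or an analogous choice supporting a swap at row $h - 1$ when $c_h = m$). Confirming that these constructions exhaust $\E{n}{m}$ and that no such construction can succeed for any $T \in \F{n}{m}$ is the technical crux. The argument admits a more conceptual reformulation via a partial order on columns, in which $\F{n}{m}$ consists exactly of the columns comparable to every other column and nontrivial product relations arise from incomparable pairs, but the elementary swap-based approach is self-contained and fits naturally with the subsequent analysis of the relations $\R{n}{m}$.
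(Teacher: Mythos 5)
Your proposal is correct and takes essentially the same route as the paper: both proofs work with explicit two-column factorizations, exhibiting a swap for every column in $\E{n}{m}$ and arguing that the columns in $\F{n}{m}$ rigidly force the canonical decomposition; your ``assignment of entries in two-entry rows'' formalism is just a systematic packaging of the paper's displayed identities. One point in your favor: the defect-at-bottom columns $(1,2,\dots,h-1,c_h)$ with $c_h>h$ genuinely do need the separate treatment you give them (the naive choice $T'=(1,\dots,h)$ only returns the pair $\{T,T'\}$), and your witness $T'=(1,\dots,h-2,c_h-1,c_h+1)$ for $c_h<m$, with a shorter or shifted column handling $c_h=m$, does the job. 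One point to tighten: in the forward direction for $T=(1,\dots,n)$ (and for $(1,\dots,n-1)$ when $m=n$), the assertion that a swap at row $i$ forces $a_i<i+1$ presupposes that the left column carries $i+1$ in position $i+1$, which fails both when several consecutive rows are swapped and when $i$ is the bottom row of a full-height $T'$; e.g.\ for $(1,2,3)\star(1,2,5)$ the reassignment at row $3$ is perfectly legal, it merely reproduces the multiset $\{T,T'\}$. So the correct phrasing is via the bottom-most nontrivially reassigned row, with the bottom-row case dispatched by reading the right column upward (which forces $a_j=j$ above the swap block and hence the same multiset); this is a routine patch, and the paper's own proof is comparably terse at exactly this point.
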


\begin{proof}
    We prove the following equvalent statement; a column 
    $T \in \A{n}{m}$ does not satisfy 
    any product relation if and only if $T \in \F{n}{m}$. 
    Suppose first that $T$ is a column of height $1$ with reading word $w_{col}(T)=a$ 
    for some $1\leq a \leq m$. 
    Since for any $a<m$ there exists $b>a$ satisfying 
    \[
       \tableau{a} \;\star\; \tableau{1\\b} = \tableau{1} \;\star \;\tableau{a\\b} 
    \]
    then $T$
    satisfies a product relation 
    if and only if $a\neq m$. 

    Suppose now that $T$ is a column of height $k$ with 
    $w_{col}(T)=a_k\dots a_2a_1$ for some $1\leq k \leq n$. 
    If $a_1 \neq 1$ then 
    \[  \tableau{a_1\\a_2\\\vdots\vspace{2mm}\\a_k} \;\star\; \tableau{1} 
       = \tableau{1\\a_2\\\vdots\vspace{2mm}\\a_k} \;\star\; \tableau{a_1} 
    \]
    and $T$ satisfies a product relation. If $a_1=1$ 
    and $a_2\neq 2$ we can repeat 
    the argument to find a column $T'\prec T$ satisfying 
    \[T \star \hackcenter{\tableau{1\\2}} = T' \star \hackcenter{\tableau{1\\a_2}}.\]
    Continuing in this manner we see that any nonstandard filling $T$ 
    will satisfy a product relation.
    Thus, let $T$    
    be a standard column tableau
    of height $1\leq k\leq n$ 
    and consider the relation 
    \[
       \tableau{
           1\\
           2\\
           \vdots\vspace{2mm}\\
           {\text{\small{$k$--1}}}
           \\k
       } \;
       \star\; 
       \tableau{ 
           1\\
           2\\
           \vdots\vspace{2mm}\\
           {\text{\small{$k$--1}}}\\
           {\text{\small{$k$+1}}}\\
           {\text{\small{$k$+2}}} 
       } 
       =
       \tableau{ 
           1\\
           2\\
           \vdots\vspace{2mm}\\
           {\text{\small{$k$--1}}}\\
           {\text{\small{$k$+1}}} 
       } 
      \; \star \;
       \tableau{ 
           1\\
           2\\
           \vdots\vspace{2mm}\\
           {\text{\small{$k$--1}}}\\
           k\\
           {\text{\small{$k$+2}}} 
       }.
    \]
    For $m>n$, this implies $T$ satisfies a product relation for all $1<k<n$. 
    If instead $m=n$, then the relation above can only hold whenever $k<n-1$. 
    In fact, when $k=n-1$ the consecutive entries of $T$ prevent any such 
    product relations from occurring 
    since the only entry that could potentially be swapped 
    is the lowest one. However, the only way of exchanging $n-1$ with $n$ 
    in this case yields 
    a commutation relation, not a product relation. 
    Similarly, for any $m\geq n$, 
    if $k=n$ then once again the consecutive entries of $T$ prevent any swaps 
    from being possible. 
    Thus, the only columns in $\A{n}{m}$ not subject to any 
    product relations are precisely those in $\F{n}{m}$. 
\end{proof}

To simplify notation, we will employ the following convention. 

\begin{definition}
    For a finite set $Y = \{y_1,\dots,y_k\}$ denote by 
    $\fk[Y]$ the polynomial ring $\fk[y_1,\dots,y_k]$. 
    Let $f, g \in \fk[Y]$, we denote their product by the 
    concatenation $fg \in \fk[Y]$. 
\end{definition}

We will now consider the ideal of relations of the tableaux algebra. 

\begin{definition}
    Let $\R{n}{m}$ be the ideal in $\fk[\G{n}{m}]$ generated by 
    all product relations of the form $T_1\star T_2 - T_3 \star T_4$ for all distinct $T_i \in \G{n}{m}$. 
\end{definition}

In particular, $\R{n}{m}$ is a finitely generated ideal. 

\begin{corollary}\label{cor:tensorprod}
    The ideal $\R{n}{m}$ is contained in $\fk[\E{n}{m}]$, hence 
    there is an algebra isomorphism 
    \[
        \A{n}{m} \cong \fk[\E{n}{m}]/\R{n}{m} \otimes \fk[\F{n}{m}]. 
    \]
    Moreover, $\R{n}{m}$ is prime in both $\fk[\E{n}{m}]$ and $\A{n}{m}$.
\end{corollary}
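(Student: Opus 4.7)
The plan is to use Lemma \ref{lem:prod-ideal} to show that $\R{n}{m}$ lives entirely in the subring $\fk[\E{n}{m}]$, then extract the isomorphism by exploiting the disjoint partition $\G{n}{m} = \E{n}{m} \sqcup \F{n}{m}$, and finally deduce primality from the fact that $\A{n}{m}$ is an integral domain by Proposition \ref{prop:IntegralDomain-Noetherian-Reduced-Jacobson}.

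First, I would examine a generator $T_1 \star T_2 - T_3 \star T_4$ of $\R{n}{m}$. Each column $T_i$ appearing here participates in a product relation with the remaining three, after checking that the relation is nontrivial so that the $T_i$ are genuinely distinct, using cancellativity of $(\SSYT_m^n, \star)$ from Proposition \ref{prop:monoid} to rule out the case $T_1 = T_3$ forcing $T_2 = T_4$ and analogous degeneracies. By Lemma \ref{lem:prod-ideal}, each such $T_i$ must belong to $\E{n}{m}$, establishing the inclusion $\R{n}{m} \subseteq \fk[\E{n}{m}] \subseteq \fk[\G{n}{m}]$.

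Next, since $\E{n}{m}$ and $\F{n}{m}$ are disjoint and partition $\G{n}{m}$ by Definition \ref{def:vector-space-quadratic}, the polynomial ring splits as $\fk[\G{n}{m}] \cong \fk[\E{n}{m}] \otimes_{\fk} \fk[\F{n}{m}]$. Since the presentation $\A{n}{m} \cong \fk[\G{n}{m}]/\R{n}{m}$ only quotients by relations supported on the first tensor factor, the standard base-change formula for quotients of tensor products gives
\[
    \A{n}{m} \cong \bigl(\fk[\E{n}{m}]/\R{n}{m}\bigr) \otimes_{\fk} \fk[\F{n}{m}].
\]
For primality, Proposition \ref{prop:IntegralDomain-Noetherian-Reduced-Jacobson} tells us the right-hand side is a domain. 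Since $\fk[\F{n}{m}]$ is a free $\fk$-module containing $1$, the natural map $\fk[\E{n}{m}]/\R{n}{m} \hookrightarrow \bigl(\fk[\E{n}{m}]/\R{n}{m}\bigr) \otimes_{\fk} \fk[\F{n}{m}]$ is injective, so $\fk[\E{n}{m}]/\R{n}{m}$ is a domain and $\R{n}{m}$ is prime in $\fk[\E{n}{m}]$. Likewise, primality of $\R{n}{m}$ inside $\A{n}{m}$ follows from viewing it as the defining ideal of the quotient $\fk[\G{n}{m}] \twoheadrightarrow \A{n}{m}$, which is prime precisely because $\A{n}{m}$ is a domain.

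The main obstacle I anticipate is the very first step: one must carefully verify that Lemma \ref{lem:prod-ideal} applies to \emph{every} column appearing in \emph{every} generating relation of $\R{n}{m}$. This requires handling possibly degenerate relations with repeated $T_i$ by showing that they reduce to trivial relations or commutation relations via cancellativity, so that only genuine product relations among pairwise distinct columns survive as generators of $\R{n}{m}$.
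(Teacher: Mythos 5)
Your proposal is correct and follows essentially the same route as the paper's proof: Lemma \ref{lem:prod-ideal} places every column appearing in a nontrivial generator of $\R{n}{m}$ inside $\E{n}{m}$, the partition $\G{n}{m} = \E{n}{m} \sqcup \F{n}{m}$ yields the tensor decomposition, and primality follows from $\A{n}{m}$ being an integral domain (the paper phrases this as $\fk[\E{n}{m}]/\R{n}{m}$ being a unital subring of the domain $\A{n}{m}$, which is your injectivity-of-the-tensor-inclusion argument). Your extra check that degenerate relations with repeated columns collapse to trivial or commutation relations is a detail the paper leaves implicit, not a different method.
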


\begin{proof} 
    Since $\A{n}{m}$ is generated by $\G{n}{m}$ 
    we know from the proof Theorem \ref{thm:quadratic} 
    that $\A{n}{m} = T(\ \tensor[_{n\hspace{-1mm}}]{V}{_m}) / \langle R \rangle
    \cong \fk[\G{n}{m}] / \mathcal{P}$ for some ideal $\mathcal{P}$ 
    consisting of all product relations. 
    We know that the only elements in 
    $\G{n}{m}$ subject to such relations are those in $\E{n}{m}$ 
    by Lemma \ref{lem:prod-ideal}, 
    so $\R{n}{m}$ is an ideal in $\E{n}{m}$. 
    Since $\F{n}{m} = \G{n}{m} \setminus \E{n}{m}$, the isomorphism follows. 
    Lastly, $\fk[\E{n}{m}]/\R{n}{m}$ is an integral domain because it is 
    a unital subring of the integral domain $\A{n}{m}$, 
    so $\R{n}{m}$ is prime. 
\end{proof}

Setting $\fE{n}{m} \coloneqq \fk[\E{n}{m}]/\R{n}{m}$ and $\fF{n}{m} \coloneqq \fk[\F{n}{m}]$ 
then, by Corollary~\ref{cor:tensorprod}, we obtain
\begin{equation} \label{eq:TensorDecompose}
\A{n}{m} \cong \fE{n}{m} \otimes \fF{n}{m}.    
\end{equation} 
We will use this decomposition further when we study its spectra in 
Section~\ref{sec:ideals-spectra}. 

\begin{example}
    Consider $\A{3}{3}$, the generators in $\G{3}{3}$ are in the following order. 
    \[
        \hackcenter{\tableau{1}}\;\prec\; 
       \hackcenter{ \tableau{2}}\;\prec\; 
        \hackcenter{\tableau{3}}\;\prec\; 
        \hackcenter{\tableau{1\\2}}\;\prec\; 
       \hackcenter{ \tableau{1\\3}}\;\prec\; 
        \hackcenter{\tableau{2\\3}}\;\prec\; 
        \hackcenter{\tableau{1\\2\\3}} 
    \]
    It is easy to see that in $T(\ \tensor[_{3\hspace{-1mm}}]{V}{_3})$ the only 
    product relation among the generators is 
    \[
        \tableau{2\\3}\;\otimes \;\tableau{1} 
        - \tableau{1\\3}\;\otimes \;\tableau{2} \; 
        \in R 
    \]
    or equivalently that 
    \begin{equation}\label{eq:first-product-relation}
        \tableau{2\\3} \;\star \;\tableau{1} = \tableau{1\\3} \;\star\; \tableau{2}
    \end{equation}
    in $\A{3}{3}$. 
    Indeed, the only generators not present in this relation are exactly those 
    in 
    \[\F{3}{3} = \left\lbrace \;\hackcenter{\tableau{3}\;, \tableau{1\\2}\;, \tableau{1\\2\\3}} \;
    \right\rbrace.\]
    Now consider instead $\A{2}{3}$. Although the only product relation 
    is again \eqref{eq:first-product-relation}, $\G{2}{3}$ does not contain 
    the last column of $\G{3}{3}$, and thus the generators 
    not present in \eqref{eq:first-product-relation} are those in the set 
    \[\F{2}{3} = \left\lbrace \;\hackcenter{\tableau{3}\;, \tableau{1\\2}} \; \right\rbrace.\] 
\end{example}

\begin{corollary}
    For any positive integer $n$ we have $\R{n-1}{n}=\R{n}{n}$. 
\end{corollary}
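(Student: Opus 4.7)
The plan is to compare the generating sets of $\R{n-1}{n}$ and $\R{n}{n}$ directly. Writing $C_0$ for the unique column tableau of height $n$ with entries $1,2,\ldots,n$, I would first observe that $\G{n}{n} = \G{n-1}{n} \sqcup \{C_0\}$, since any column of height $n$ with strictly increasing entries from $\{1,\ldots,n\}$ must equal $C_0$. Unpacking Definition \ref{def:vector-space-quadratic} in the two cases at hand (taking $m = n$ for the pair $(n,n)$, and taking $m \neq n'$ for the pair $(n-1,n)$), a direct inspection yields $\F{n}{n} = \F{n-1}{n} \sqcup \{C_0\}$, and consequently $\E{n}{n} = \E{n-1}{n}$. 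By Corollary \ref{cor:tensorprod}, both $\R{n}{n}$ and $\R{n-1}{n}$ are then ideals of the common polynomial ring $\fk[\E{n-1}{n}]$.

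The inclusion $\R{n-1}{n} \subseteq \R{n}{n}$ is immediate, since every generating product relation $T_1 \star T_2 - T_3 \star T_4$ built from columns of $\G{n-1}{n} \subseteq \G{n}{n}$ is \emph{a fortiori} a generator of $\R{n}{n}$. For the reverse inclusion, I would argue that every generating product relation of $\R{n}{n}$ which features $C_0$ collapses to zero. Suppose $T_1 \star T_2 = T_3 \star T_4$ is such a relation with, by commutativity, $T_1 = C_0$. The shape of a star product is the componentwise sum of the column shapes of the factors, so the shape of $T_1 \star T_2$ has first column of height $n$. Hence $T_3$ or $T_4$ must also have height $n$; by the shape uniqueness noted above this forces one of them, say $T_3$, to equal $C_0$. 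The cancellation property of $\SSYT_n^n$ from Proposition \ref{prop:monoid} then gives $T_2 = T_4$, so the relation is trivial.

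Combining the two implications, every nontrivial generating product relation of $\R{n}{n}$ involves only columns of $\G{n-1}{n}$ and therefore lies in $\R{n-1}{n}$, completing the reverse inclusion and the desired equality. The only step that requires genuine care, rather than bookkeeping, is the shape argument together with the invocation of cancellation; everything else is a matter of tracking the case distinction between $m = n$ and $m \neq n$ in Definition \ref{def:vector-space-quadratic}. I do not anticipate any serious obstacle.
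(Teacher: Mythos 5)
Your proof is correct and follows essentially the paper's own route: the paper's argument is precisely the observation that $\G{n}{n}\setminus\G{n-1}{n} = \F{n}{n}\setminus\F{n-1}{n}$ consists only of the standard column of height $n$, so $\E{n}{n}=\E{n-1}{n}$ and the equality of ideals follows from Corollary~\ref{cor:tensorprod}. Your second paragraph (the shape-plus-cancellation argument showing any product relation involving the full column is trivial) is correct but redundant, as it re-derives the height-$n$ case of Lemma~\ref{lem:prod-ideal}, which is exactly what underlies Corollary~\ref{cor:tensorprod}.
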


\begin{proof}
    Since $\G{n}{n}\setminus \G{n-1}{n} = \F{n}{n}\setminus \F{n-1}{n}$, 
    both containing only the standard tableau of height $n$, 
    the result follows from Corollary~\ref{cor:tensorprod}. 
\end{proof}

\subsection{Enumerating relations}\label{subsec:enumeration}

We now give an upper bound 
on the codimension of the variety $\mathcal{V}(\R{m}{n})$ by enumerating 
the size of the minimal generating set for the ideal $\R{n}{m}$. 
We will further explore its geometric meaning 
in Section~\ref{sec:toric}. 

\begin{definition}
    Let $\varsigma(\R{n}{m})$ denote 
    the cardinality of a minimal generating set for the ideal $\R{n}{m}$. 
\end{definition}

To count $\varsigma(\R{n}{m})$ consider the following set up. 
For $S \in \SSYT_m^n$ any two column tableau, let
\[
    \COL(S)\coloneqq 
    \{ (T,T') \in \G{n}{m} \times \G{n}{m} \mid T \star T' = S \} 
\]
be the set of all pairs $T,T' \in \G{n}{m}$ such that $S=T \star T'$. 
For any pair $(T,T') \in \COL(S)$ set
\[
L_{T,T'} \coloneqq \text{ left column of }S \qquad \text{ and } \qquad R_{T,T'} \coloneqq \text{ right column of }S.
\]
We endow $\COL(S)$ with the following total order. 
Given $(T,T'), (T'', T''') \in \COL(S)$, we say 
$(T,T') \lhd (T'', T''')$ when either: 
\begin{enumerate}
    \item $\height(T)>\height(T'')$, or 
    \item $\height(T)=\height(T'')$ and $w_{col}(T)<w_{col}(T'')$. 
\end{enumerate}
In particular, this order is \emph{not} equivalent to the lexicographic order 
on tuples induced by $\preceq$ on tableaux. 
It is easy to see that under this order the minimal element of $\COL(S)$ 
is precisely the tuple $(L_{T,T'},R_{T,T'})$ where $(T,T')$ is any pair in $\COL(S)$. 

Observe that we can obtain the minimal number of product relations 
$T\star T' = S = T''\star T'''$ yielding $S$ 
from the number of relations involving the minimal element of $\COL(S)$. 
More precisely, the minimal number of product relations yielding $S$ coincides 
with the number of tuples  $\Big((L_{T,T'},R_{T,T'}),(T,T')\Big)$ 
for which  
$(L_{T,T'},R_{T,T'}) \lhd (T,T')$ and 
$(L_{T,T'},R_{T,T'}) \neq  (T,T')$.
Since the minimal element of $\COL(S)$ is unique, 
this is equivalent to counting the number of pairs $(T,T')$ 
where either: 
\begin{enumerate}
    \item $\height(T)>\height(T')$,
    $L_{T,T'} \neq T$, and $R_{T,T'} \neq T'$, or 
    \item $\height(T)=\height(T')$
    $L_{T,T'} \neq T$, and 
    $w_{col}(T)<w_{col}(T')$.
\end{enumerate}
Thus, $\varsigma(\R{n}{m})$ is equal to the number of pairs $(T,T')$ 
where either $T$ is longer than $T'$, or $T$ and $T'$ have the same height 
and $w_{col}(T)<w_{col}(T')$, minus the minimal element of $\COL(S)$ as 
$S$ ranges over all two column tableaux in
$\SSYT_m^n$. 

Given $T,T' \in \G{n}{m}$,  denote by $T\cap T'$ the set of repeated entries in $T$ and $T'$. 
To count $\varsigma(\R{n}{m})$ we will use the 
following auxiliary sets. 
For $i, j, k \in \mathbb{Z}_{\geq0}$ and $K \subseteq \{1, \dots, m\}$ with $|K| = k$, define: 
\begin{align*}
\ZZ{n}{m}(i,j,k)& \coloneqq 
\begin{cases}
\{(T,T')\in \G{n}{m} \times \G{n}{m}\mid \height(T)=\height(T')=i, \text{ and } w_{col}(T)<w_{col}(T')\}&; j=0
\\
\{ (T,T') \in \G{n}{m} \times \G{n}{m}\mid \height(T)=i+j,\, \height(T')=i,\text{ and } |T\cap T'|=k\} &; j>0,
\end{cases}
\\
\W{n}{m}(i,j,K)& \coloneqq \{ (T,T') \in \ZZ{n}{m}(i,j,k) \mid (L_{T,T'},R_{T,T'}) = (T,T'), \text{ and } T\cap T'=K\},
\\
\U{n}{m}(i,j,K)& \coloneqq \{ (T,T') \in \W{n}{m}(i-k,j,\emptyset) \mid T\cap K=T'\cap K=\emptyset\}.
\end{align*}

\begin{lemma}\label{lemm:total-number-columns}
    For any positive integers $m \geq n\geq i+j$, the cardinality of $\ZZ{n}{m}(i,j,k)$ is 
    \[
        \frac{1}{2} \binom{m}{k} \binom{m-k}{2i-2k} \binom{2i-2k}{i-k} 
        \text{ if } j=0 
        \quad \text{and} \quad 
        \binom{m}{k} \binom{m-k}{2i+j-2k} \binom{2i+j-2k}{i-k} 
        \text{ if } j>0.
    \]
\end{lemma}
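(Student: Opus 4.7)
The plan is to reduce the enumeration to counting pairs of subsets of $\{1,\ldots,m\}$. Since a column tableau is strictly increasing from top to bottom, each $T \in \G{n}{m}$ of height $h$ is uniquely determined by its $h$-element set of entries $S_T \subseteq \{1,\ldots,m\}$. Thus counting pairs $(T,T') \in \G{n}{m} \times \G{n}{m}$ with prescribed heights and $|T \cap T'| = k$ is equivalent to counting ordered pairs of subsets $(A,B)$ with $|A|$ and $|B|$ as prescribed and $|A \cap B| = k$, where $A = S_T$ and $B = S_{T'}$.

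For $j > 0$, the heights $i+j$ and $i$ are distinct, so $T \neq T'$ automatically and there is no symmetry to quotient out. I would perform the count in three steps: first choose the intersection $A \cap B$ of size $k$ from $\{1,\ldots,m\}$, giving $\binom{m}{k}$ choices; second, choose the symmetric difference $A \triangle B$ of size $2i+j-2k$ from the remaining $m-k$ elements, giving $\binom{m-k}{2i+j-2k}$ choices; third, decide which $i-k$ elements of $A \triangle B$ lie in $B \setminus A$, with the remaining $i+j-k$ elements forming $A \setminus B$, giving $\binom{2i+j-2k}{i-k}$ choices. Their product matches the stated formula, and the constraints $i+j \leq n \leq m$ ensure all binomial coefficients make sense.

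For $j = 0$, the same three-step procedure would count ordered pairs $(T,T')$ of height-$i$ columns with $|T \cap T'| = k$, ignoring the condition $T \prec T'$. Since $\prec$ is a total order, the involution $(T,T') \mapsto (T',T)$ restricted to off-diagonal pairs is fixed-point free and matches each pair satisfying $T \prec T'$ with exactly one satisfying $T' \prec T$, so dividing by two yields the stated formula. This division is exact whenever $k < i$, which is the regime relevant to the subsequent enumeration of $\varsigma(\R{n}{m})$, since then $|T \cap T'| < i$ forces $T \neq T'$ and no diagonal pairs arise. No nontrivial combinatorial identities are required, so the only anticipated obstacle is careful subset-size bookkeeping in the three-step decomposition.
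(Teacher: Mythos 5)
Your proposal is correct and follows essentially the same route as the paper: choose the $k$ common entries, then the $2i+j-2k$ remaining entries, then which $i-k$ of them go in the shorter (or second) column, halving in the $j=0$ case because $\prec$ is a total order on equal-height columns. Your explicit remarks about identifying columns with subsets and about the division by two being exact only when $k<i$ (the regime used later) are slightly more careful than the paper's wording but do not change the argument.
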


\begin{proof}\label{lemm:proportion-number-columns}
    There are $\binom{m}{k}$ ways of choosing the $k$ repeated entries. 
    When $j >0$ there are $\binom{m-k}{2i+j-2k}$ ways of choosing 
    the remaining entries, and  $\binom{2i+j-2k}{i-k}$ ways of choosing 
    which entries go in the shorter column of height $i$. 
    When $j=0$ there are $\binom{m-k}{2i-2k}$ ways of choosing the entries 
    that only appear once, and $\frac{1}{2}\binom{2i-2k}{i-k}$ ways 
    of choosing $i-k$ entries for the first column such that 
    it is lexicographically smaller than the second column. 
\end{proof}

Given any possible set $K$ of cardinality $k$, we now count the elements in $\ZZ{n}{m}(i,j,k)$ 
satisfying certain ordering condition that are not 
the minimal element. 
To do this, we count the proportion of minimal elements and subtract it 
from $1$, yielding the proportion we desire. 

\begin{lemma}
    For $m \geq n$, the proportion of pairs 
    $(T,T') \in \ZZ{n}{m}(i,j,k)$ satisfying the condition that  
    $(T'',T''') \in \COL(T \star T')$ 
    implies $(T,T') \lhd (T'',T''')$, is 
    \[
        \frac{i-k-1}{i-k+1} 
        \text{ if } j=0 
        \quad \text{and} \quad 
        \frac{i-k}{i+j-k+1} 
        \text{ if } j\geq 1 
    \]
    of the cardinality of $\ZZ{n}{m}(i,j,k)$. 
\end{lemma}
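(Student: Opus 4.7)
The plan is to identify the pairs in $\ZZ{n}{m}(i,j,k)$ that are $\lhd$-minimal in their $\COL$-class, count them via a ballot argument, and then take the complement. First, I would observe that a pair $(T,T') \in \ZZ{n}{m}(i,j,k)$ is the $\lhd$-minimum of $\COL(T\star T')$ if and only if $(T,T') = (L_{T,T'},R_{T,T'})$; by the row-wise definition of $\star$, this is equivalent to the pointwise inequalities $T_\ell \leq T'_\ell$ for every $\ell \in \{1,\dots,i\}$, where $T_\ell$ denotes the $\ell$-th entry from the top of the column $T$.

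Next I would encode each $(T,T') \in \ZZ{n}{m}(i,j,k)$ as a labeled sequence obtained by arranging the $2i+j-k$ distinct elements of $T\cup T'$ in increasing order and attaching to each the label \emph{T-only}, \emph{T'-only}, or \emph{common}, according to which of the columns contain it. The central observation is that, since common labels contribute equally to $T$ and $T'$, the minimality condition $T_\ell \leq T'_\ell$ translates verbatim into the ballot-type condition that the cumulative count of T-only labels dominates the cumulative count of T'-only labels at every prefix of the sequence.

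The remainder of the argument is a direct application of the cycle lemma. After choosing the support inside $[m]$ and positioning the $k$ common labels among the $2i+j-k$ ordered slots, the cycle lemma counts the ballot-valid arrangements of the remaining $i+j-k$ T-only and $i-k$ T'-only labels, yielding $\frac{j+1}{i+j-k+1}\binom{2i+j-2k}{i-k}$ in general and the Catalan number $\frac{1}{i-k+1}\binom{2i-2k}{i-k}$ when $j=0$. Dividing by the cardinality $|\ZZ{n}{m}(i,j,k)|$ from Lemma~\ref{lemm:total-number-columns} produces a minimality proportion of $\frac{j+1}{i+j-k+1}$ for $j\geq 1$ and $\frac{2}{i-k+1}$ for $j=0$. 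The extra factor of $2$ in the latter compensates for the $\tfrac{1}{2}$ in $|\ZZ{n}{m}(i,0,k)|$ arising from the constraint $T\prec T'$: this constraint halves the total but leaves the minimal count unaffected, since minimality already forces $T\preceq T'$ with equality only in the degenerate case $k=i$ (for which $\ZZ{n}{m}(i,0,i)$ is empty). Taking complements then recovers the claimed proportions.

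The main obstacle is the clean identification of the prefix ballot condition with the pointwise minimality condition $T_\ell \leq T'_\ell$, together with the bookkeeping of the factor of $2$ in the $j=0$ case; once these combinatorial translations are firmly in place, the cycle-lemma count and algebraic simplification are routine.
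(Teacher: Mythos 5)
Your proposal is correct and follows essentially the same route as the paper: both reduce the $\lhd$-minimality of $(T,T')$ to the row-wise domination condition, discard (or render irrelevant) the $k$ common entries, and count the resulting weak ballot sequences of $i+j-k$ versus $i-k$ letters -- the paper via MacMahon's weak ballot problem after a bijection $\W{n}{m}(i,j,K)\to\U{n}{m}(i,j,K)$, you via the cycle lemma on the merged labeled sequence -- before taking complements. Your bookkeeping of the factor of $2$ in the $j=0$ case (the $T\prec T'$ constraint halves the total but not the minimal count) matches the paper's argument exactly.
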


\begin{proof}
    Let $K \subseteq \{1, \dots, m\}$ be the set of $k$ repeated entries, so 
    $|K| = k$. 
    Our task at hand is to count $\W{n}{m}(i,j,K)$, for all $K$. 
    There is a bijection from $\W{n}{m}(i,j,K)$ to $\U{n}{m}(i,j,K)$ 
    given by removing all of the entries 
    in $K$ from a pair of columns in $\W{n}{m}(i,j,K)$, 
    so $|\W{n}{m}(i,j,K)| = |\U{n}{m}(i,j,K)|$, and we now 
    count the latter. 

    We have $2i+j-2k$ entries, 
    where the $i+j-k$ entries in the first column have to be strictly 
    increasing in each row when compared to the $i-k$ entries in the second 
    column. 
    This corresponds to the number of words of length $2i+j-2k$ 
    having exactly $i+j-k$ ones and exactly $i-k$ twos such that 
    when read from left to right we always read at least as many ones as twos. 
    This is a case of the classical 
    weak ballot counting problem~\cite{Mac1909}. 
    When $j \geq 1$ it is counted by $\frac{(i+j-k)-(i-k)+1}{(i+j-k)+1}$ 
    of the total number of all binary words with $2i+j-2k$ entries. 
    Subtracting 
    it from $1$ establishes the second claim. When $j = 0$ it is counted by 
    $\frac{1}{i-k+1}$ of the total number of all binary words 
    with $2i-2k$ entries. 
    However, only half of all binary words with $2i-2k$ entries
    correspond to elements in $\U{n}{m}(i,0,K)$, 
    namely the ones 
    corresponding to the first column being less than the second in 
    lexicographic order. Thus, the proportion of columns we are interested 
    in is double of the aforementioned, namely $\frac{2}{i-k+1}$ 
    and subtracting 
    it from $1$ establishes the first claim. 
\end{proof}

\begin{theorem}\label{thm:enumeration}
    For $m \geq n$ the following equality holds, 
    \begin{equation*}
        \varsigma(\R{n}{m}) = \sum_{i=1}^{ n } 
        \sum_{\substack{k= \max \\ \{ 0,  2i-m \}}}^{i-1} 
        \sum_{j=0}^{\substack{\min\{n-i, \\ m-2i+k\}}} 
        \binom{m}{k}\left(1-\frac{1}{2}\delta_{0,j} \right) 
        \frac{i-k-\delta_{0,j}}{i+j-k+1}\binom{m-k}{2i+j-2k}\binom{2i+j-2k}{i-k}. 
    \end{equation*}
\end{theorem}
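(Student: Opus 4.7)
The plan is to combine the two preceding lemmas and sum over all admissible triples $(i,j,k)$. Recall from the paragraph preceding Lemma \ref{lemm:total-number-columns} that $\varsigma(\R{n}{m})$ equals the number of pairs $(T, T') \in \G{n}{m} \times \G{n}{m}$ which strictly dominate the $\lhd$-minimum of $\COL(T \star T')$, under the convention that either $\height(T) > \height(T')$, or $\height(T) = \height(T')$ and $T \prec T'$. These pairs are partitioned by the strata $\ZZ{n}{m}(i,j,k)$, indexed by the shorter height $i = \min\{\height(T), \height(T')\}$, the height difference $j = |\height(T) - \height(T')|$, and the overlap $k = |T \cap T'|$.

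For each $(i,j,k)$, I multiply the cardinality $|\ZZ{n}{m}(i,j,k)|$ from Lemma \ref{lemm:total-number-columns} by the proportion of non-minimal elements from the subsequent lemma. The two cases $j=0$ and $j \geq 1$ are then unified under a single summand by introducing the factor $\bigl(1 - \tfrac{1}{2}\delta_{0,j}\bigr)$, which absorbs the $\tfrac{1}{2}$ prefactor appearing only in the $j=0$ cardinality, together with the factor $\frac{i-k-\delta_{0,j}}{i+j-k+1}$, which specializes to $\frac{i-k-1}{i-k+1}$ for $j=0$ and to $\frac{i-k}{i+j-k+1}$ for $j \geq 1$.

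The final step is to determine the admissible ranges. The shorter-column height must satisfy $i \in \{1, \ldots, n\}$. The difference $j$ is bounded above by $n-i$ so that the longer column fits in at most $n$ rows, and by $m-2i+k$ so that the $2i+j-k$ distinct entries involved fit in $\{1, \ldots, m\}$, giving $j \in \{0, 1, \ldots, \min\{n-i, m-2i+k\}\}$. Similarly, $k \geq \max\{0, 2i-m\}$ comes from the entry-fitting constraint in the $j=0$ case together with $k \geq 0$, and $k \leq i-1$ is enforced since $k=i$ would force $T = T'$ in the $j=0$ case (excluded by $T \prec T'$) and produces a vanishing proportion in the $j \geq 1$ case. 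Summing over all valid $(i,j,k)$ then yields the stated formula.

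The principal obstacle is the careful combinatorial bookkeeping: verifying that the two $j$-cases cleanly merge under the Kronecker-delta factors, and confirming that the stated index ranges capture every nontrivial contribution with neither omission nor overcounting at the boundaries.
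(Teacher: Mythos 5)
Your proposal is correct and follows essentially the same route as the paper: multiply the stratum counts $|\ZZ{n}{m}(i,j,k)|$ from Lemma~\ref{lemm:total-number-columns} by the non-minimal proportions from the subsequent lemma, determine the admissible ranges of $(i,j,k)$ exactly as stated, and merge the $j=0$ and $j\geq 1$ cases via the Kronecker-delta factors. The bookkeeping you flag (the $\frac{1}{2}$ absorption, the bounds $k\leq i-1$ and $j\leq\min\{n-i,\,m-2i+k\}$) checks out and matches the paper's own verification.
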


\begin{proof}
    For a fixed triple $i$, $j$, $k$ we have computed the total number 
    of pairs of columns in Lemma~\ref{lemm:total-number-columns}, and 
    the proportion of which we are interested in 
    Lemma~\ref{lemm:proportion-number-columns}. 
    It only remains to sum over all possible triples.
    Since $i$ ranges from $1$ to $n$, 
    for any fixed $i$, the value of $k$ can vary from the maximum of $0$ and $2i-m$ to 
    $i-1$. Furthermore, for any fixed $i$ and $k$, the value of $j$ ranges from 
    $0$ to the minimum of $n-i$ and $m-2i+k$. Thus, we obtain 
    \begin{align*}
        \varsigma(\R{n}{m}) &= \sum_{i=1}^{n}  \sum_{k= \max \{ 0, 2i-m \}}^{i-1} 
        \Bigg[ \left( \frac{1}{2} \right) \frac{i-k-1}{i-k+1} 
        \binom{m}{k} \binom{m-k}{2i-2k} \binom{2i-2k}{i-k} \\ 
        &+ \sum_{j=1}^{\min\{n-i,m-2i+k\}} \frac{i-k}{i+j-k+1} \binom{m}{k} 
        \binom{m-k}{2i+j-2k}\binom{2i+j-2k}{i-k} \Bigg]. 
    \end{align*}
    This expression can be simplified further 
    using the Kronecker delta $\delta_{i,j}$. 
\end{proof}

\begin{corollary}\label{cor:enumeration}
    For any positive integers $m\geq n$ we have 
    \begin{align*}
        \varsigma(\R{n}{m}) = 
        \sum_{i=1}^{ n }\binom{m}{i} \sum_{k = 1}^{\min\{i,m-i\}} 
        \binom{i}{k} 
        \Bigg[ - 
        \frac{1}{2} \binom{m-i}{k} + \sum_{j=k}^{\min\{n+k-i,m-i\}} \frac{k}{j+1} 
        \binom{m-i}{j} 
        \Bigg]. 
    \end{align*}
\end{corollary}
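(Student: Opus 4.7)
The plan is to establish Corollary~\ref{cor:enumeration} as a compact reformulation of the triple sum in Theorem~\ref{thm:enumeration}, obtained through a change of summation variables followed by a single factorial identity.

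First, I would substitute $k' = i - k$ and $j' = j + k'$ in the expression of Theorem~\ref{thm:enumeration}. Under this reindexing, the constraint $\max\{0, 2i-m\} \leq k \leq i-1$ transforms into $1 \leq k' \leq \min\{i,\, m-i\}$, while $0 \leq j \leq \min\{n-i,\, m-2i+k\}$ transforms into $k' \leq j' \leq \min\{n-i+k',\, m-i\}$. These are exactly the ranges appearing in Corollary~\ref{cor:enumeration}.

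Second, I would apply the factorial identity
\[
    \binom{m}{i-k'}\binom{m-i+k'}{k'+j'}\binom{k'+j'}{k'} = \binom{m}{i}\binom{i}{k'}\binom{m-i}{j'},
\]
which is verified by expanding both sides to $m! \,/\, \bigl((i-k')!\, k'!\, j'!\, (m-i-j')!\bigr)$. This converts the triple product of binomials in the theorem into the form displayed in the corollary, leaving the rational prefactor $\bigl(1 - \tfrac{1}{2}\delta_{0,\,j'-k'}\bigr)\tfrac{k' - \delta_{0,\,j'-k'}}{j'+1}$ to reconcile.

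Third, I would split off the $j' = k'$ contribution from the sum. For $j' > k'$ the rational factor reduces to $\frac{k'}{j'+1}$, matching the general summand of the corollary verbatim. For $j' = k'$ it equals $\frac{k'-1}{2(k'+1)} = \frac{k'}{k'+1} - \frac{1}{2}$; the first piece reabsorbs into the general sum as its $j' = k'$ summand, while the second yields the isolated term $-\frac{1}{2}\binom{m}{i}\binom{i}{k'}\binom{m-i}{k'}$ that is separated out in the corollary. Renaming $k' \to k$ and $j' \to j$ completes the argument.

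The main obstacle is purely organizational: carefully tracking the transformation of the three nested summation bounds, and verifying that the delta-term manipulation produces exactly the split appearing in the corollary. No substantive new combinatorial content is needed beyond the factorial identity above and the enumeration already carried out in Theorem~\ref{thm:enumeration}.
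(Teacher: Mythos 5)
Your proposal is correct and follows essentially the same route as the paper, whose one-line proof invokes repeated applications of the identity $\binom{n}{k}\binom{k}{j} = \binom{n}{j}\binom{n-j}{k-j}$ — your reindexing $k'=i-k$, $j'=j+k'$, the factorial identity, and the splitting of the $\delta$-term are just the explicit details of that same binomial manipulation applied to Theorem~\ref{thm:enumeration}. All the transformed summation bounds and the treatment of the $j'=k'$ contribution check out.
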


\begin{proof}
    The claim follows from repeated applications of the relation 
    $\binom{n}{k}\binom{k}{j} = \binom{n}{j}\binom{n-j}{k-j}$. 
\end{proof}

In particular, we have the following consequence of the above reasoning. 

\begin{corollary}
    Let $\BB{n}{m}$ be the subset of $\R{n}{m}$ consisting of the elements 
    \[
        T T' - L_{T,T'} R_{T,T'} 
    \]
    where either 
    \begin{enumerate}
        \item $\height(T)>\height(T')$
        , $L_{T,T'} \neq T$, and $R_{T,T'} \neq T'$, or 
        \item $\height(T)=\height(T')$, $L_{T,T'} \neq T$, and 
        $w_{col}(T)<w_{col}(T')$.
    \end{enumerate}
    Then $\BB{n}{m}$ is a minimal generating set for the ideal $\R{n}{m}$. 
\end{corollary}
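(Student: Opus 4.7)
The plan is to proceed in three stages: first verify that $\BB{n}{m}$ is contained in $\R{n}{m}$, then show it generates $\R{n}{m}$, and finally match its cardinality against the minimum count supplied by Theorem~\ref{thm:enumeration}.

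The containment $\BB{n}{m} \subseteq \R{n}{m}$ is immediate: every element of $\BB{n}{m}$ has the form $T \star T' - L_{T,T'} \star R_{T,T'}$, and both summands equal the same two-column tableau $S = T \star T'$ by the definition of $L_{T,T'}$ and $R_{T,T'}$, so this difference is a product relation in $\fk[\G{n}{m}]$.

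To show generation, I take an arbitrary generator $T_1 \star T_2 - T_3 \star T_4$ of $\R{n}{m}$, set $S = T_1 \star T_2 = T_3 \star T_4$, and let $(L, R)$ denote the unique $\lhd$-minimal element of $\COL(S)$. I then decompose
\[
    T_1 \star T_2 - T_3 \star T_4 = (T_1 \star T_2 - L \star R) - (T_3 \star T_4 - L \star R),
\]
and observe that each parenthesized summand is either zero (when the associated pair coincides with $(L, R)$) or, after possibly swapping the two factors using commutativity of $\fk[\G{n}{m}]$ to place the pair in the canonical form required by the definition of $\BB{n}{m}$ (longer column first in case (1), or $T \prec T'$ in case (2)), equals an element of $\BB{n}{m}$. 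Here the cancellativity of $\SSYT_m^n$ from Proposition~\ref{prop:monoid} ensures that the conditions $L_{T,T'} \neq T$ and $R_{T,T'} \neq T'$ of case (1) are equivalent, so this canonical form is well defined. This establishes that $\BB{n}{m}$ generates $\R{n}{m}$.

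For minimality, I appeal directly to the enumeration. The combinatorial set-up preceding Theorem~\ref{thm:enumeration} shows that $\varsigma(\R{n}{m})$ equals the number of pairs $(T, T')$ of columns satisfying exactly one of the two conditions listed in the definition of $\BB{n}{m}$, summed over all two-column tableaux $S$. Consequently $|\BB{n}{m}| = \varsigma(\R{n}{m})$. Since $\R{n}{m}$ is a homogeneous ideal generated in degree two inside the graded polynomial ring $\fk[\G{n}{m}]$, the minimum number of generators is well defined and equal to $\varsigma(\R{n}{m})$; because $\BB{n}{m}$ is a generating set of precisely this cardinality, no proper subset can still generate $\R{n}{m}$, so $\BB{n}{m}$ is a minimal generating set.

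The main thing to watch is the canonical reordering in the middle stage, where the commutativity of polynomial multiplication is used to place each summand in the precise ordered form required by the two cases of the definition of $\BB{n}{m}$. This is straightforward but should be stated explicitly, as the definition of $\BB{n}{m}$ depends on the ordering of the pair rather than only on the monomial.
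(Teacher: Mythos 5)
Your proof is correct and takes essentially the same route as the paper, whose own justification of this corollary is simply the combinatorial setup preceding Theorem~\ref{thm:enumeration}: for each two-column $S$, the relations attached to the non-$\lhd$-minimal canonical pairs of $\COL(S)$ span all relations yielding $S$, and their number is $\varsigma(\R{n}{m})$. Your explicit telescoping through $L_{T,T'} \star R_{T,T'}$ for generation, the cancellativity remark, and the degree-two graded argument for minimality just make explicit what the paper leaves implicit.
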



\section{Ideals and maximal spectrum}\label{sec:ideals-spectra}

\subsection{Maximal ideals} 

In this section we give a description of the maximal ideals of $\A{n}{m}$. 

To avoid confusion, given a finitely generated ideal $I$ of a ring $R$ 
with generators $r_1,\dots, r_k \in R$, we will write 
$I = \langle r_1,\dots,r_k \rangle_R$ to emphasize that the ideal is 
within $R$. We denote by $\mathcal{V}(I)$ the zero locus of $I$. 
In particular, $\mathcal{V}(\ \R{n}{m})$ is an affine algebraic variety 
by Corollary~\ref{cor:tensorprod}. Recall from \eqref{eq:TensorDecompose} that
$\A{n}{m} \cong \fE{n}{m} \otimes \fF{n}{m}$ with 
$\fE{n}{m} \coloneqq \fk[\E{n}{m}]/\R{n}{m}$ and $\fF{n}{m} \coloneqq \fk[\F{n}{m}]$. 

\begin{lemma}\label{lem:maximal}
    Suppose $\E{n}{m} = \{T_1,\dots,T_s\}$. 
    The maximal ideals of $\fE{n}{m}$ are of the form 
    \[
        {M}_{\underline{t}} 
        = \langle T_1-t_1,\dots,T_s-t_s\rangle_{\fE{n}{m}} 
    \]
    where 
    $\underline{t}=(t_1,\dots,t_s) \in \mathcal{V}(\ \R{n}{m}) 
    \subseteq \fk^{|\E{n}{m}|}$. 
\end{lemma}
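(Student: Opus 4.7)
The plan is to obtain this as a direct consequence of Hilbert's Nullstellensatz combined with the standard correspondence of ideals in a quotient ring. Since $\fk$ is algebraically closed and $\fk[\E{n}{m}]$ is the polynomial ring on finitely many variables $T_1,\dots,T_s$, the weak Nullstellensatz tells us that every maximal ideal of $\fk[\E{n}{m}]$ is of the form $\mathfrak{m}_{\underline{t}} = \langle T_1 - t_1, \dots, T_s - t_s \rangle_{\fk[\E{n}{m}]}$ for a unique $\underline{t} = (t_1,\dots,t_s) \in \fk^s$, with $\fk[\E{n}{m}]/\mathfrak{m}_{\underline{t}} \cong \fk$ realized by the evaluation map $T_i \mapsto t_i$.

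Next I would invoke the lattice isomorphism theorem: the maximal ideals of the quotient $\fE{n}{m} = \fk[\E{n}{m}]/\R{n}{m}$ are in inclusion-preserving bijection with the maximal ideals of $\fk[\E{n}{m}]$ that contain $\R{n}{m}$, via the image $\mathfrak{m}_{\underline{t}} \mapsto \mathfrak{m}_{\underline{t}}/\R{n}{m}$. So the task reduces to identifying, among the $\mathfrak{m}_{\underline{t}}$, those that contain $\R{n}{m}$.

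For this, I would use the evaluation characterization: $\R{n}{m} \subseteq \mathfrak{m}_{\underline{t}}$ if and only if every generator of $\R{n}{m}$ maps to zero under $\fk[\E{n}{m}] \twoheadrightarrow \fk[\E{n}{m}]/\mathfrak{m}_{\underline{t}} \cong \fk$, which is precisely the condition that every $f \in \R{n}{m}$ vanishes at $\underline{t}$. By definition this means $\underline{t} \in \mathcal{V}(\R{n}{m})$. Writing $M_{\underline{t}}$ for the image of $\mathfrak{m}_{\underline{t}}$ in $\fE{n}{m}$, which is precisely $\langle T_1 - t_1, \dots, T_s - t_s \rangle_{\fE{n}{m}}$, yields the claimed classification.

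There is no real obstacle here: the only point requiring mild care is to observe that $\fk[\E{n}{m}]$ is genuinely a polynomial ring on the finite set $\E{n}{m}$ (so that Nullstellensatz applies in its standard form), which is immediate from our notational convention $\fk[Y] = \fk[y_1,\dots,y_k]$ for a finite set $Y$, and that $\R{n}{m}$ is an honest ideal of $\fk[\E{n}{m}]$ as established in Corollary \ref{cor:tensorprod}.
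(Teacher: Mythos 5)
Your argument is correct and follows essentially the same route as the paper's proof: the weak Nullstellensatz description of maximal ideals of the polynomial ring $\fk[\E{n}{m}]$, the correspondence theorem for ideals of the quotient $\fE{n}{m}$, and the observation that $\R{n}{m}\subseteq\langle T_1-t_1,\dots,T_s-t_s\rangle_{\fk[\E{n}{m}]}$ precisely when $\underline{t}\in\mathcal{V}(\R{n}{m})$. No gaps; your extra remark that $\fk[\E{n}{m}]$ is an honest polynomial ring on a finite set is a sensible point of care but is already implicit in the paper.
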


\begin{proof}
    By the correspondence theorem, maximal ideals $M$ in $\fE{n}{m}$ are in bijection with 
    maximal ideals $\widetilde{M}$ in $\fk[\E{n}{m}]$ containing $\R{n}{m}$. 
    Since $\fk[\E{n}{m}] = \fk[T_1,\dots,T_s]$ is a polynomial ring, 
    its maximal ideals are of the form 
    \[
        \widetilde{M}_{\underline{t}} 
        = \langle T_1-t_1,\dots, T_s-t_s \rangle_{\fk[\E{n}{m}]} 
    \]
    where $\underline{t} \in \fk^{|\E{n}{m}|}$. 
    Since $\R{n}{m} \subseteq \widetilde{M}_{\underline{t}}$ if and only if 
    $\underline{t} \in \mathcal{V}(\ \R{n}{m})$, 
    then
    \[
        {M}_{\underline{t}} 
        = \widetilde{M}_{\underline{t}} / \R{n}{m} 
        = \langle T_1-t_1,\dots, T_s-t_s \rangle_{\fE{n}{m}}. 
        \eqno\qed 
    \]
    \hideqed
\end{proof}

In particular, Lemma \ref{lem:maximal} implies that maximal ideals in 
$\fE{n}{m}$ are in bijection with points $\underline{t} \in \fk^{|\E{n}{m}|}$ 
that satisfy the relations imposed by $\R{n}{m}$. 

\begin{theorem}\label{thm:Maxideals}
    Suppose $\E{n}{m}=\{T_1,\dots,T_s\}$ and $\F{n}{m}=\{T'_1,\dots,T'_r\}$
    with $r \in \{2,3\}$.
    The maximal ideals of $\A{n}{m}$ are of the form 
    \begin{equation}\label{eq:maxideal}
        M_{\underline{t}} 
        \coloneqq \langle T_1-t_1,\dots, T_s-t_s , T'_1-t'_1,\dots, T'_r-t'_r \rangle_{\A{n}{m}} 
    \end{equation}  
    where 
    $\underline{t} = (t_1, \dots, t_s, t'_1, \dots, t'_r) 
    \in \mathcal{V}(\R{n}{m}) \times \fk^{|\F{n}{m}|}$. 
\end{theorem}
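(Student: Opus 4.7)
The plan is to combine the tensor decomposition $\A{n}{m} \cong \fE{n}{m} \otimes_{\fk} \fF{n}{m}$ from Corollary \ref{cor:tensorprod} with a direct application of Hilbert's Nullstellensatz. Since $\A{n}{m}$ is finitely generated over the algebraically closed field $\fk$ by Theorem \ref{thm:finite generation}, every maximal ideal $M$ has residue field $\A{n}{m}/M \cong \fk$, so maximal ideals correspond bijectively with surjective $\fk$-algebra morphisms $\phi: \A{n}{m} \to \fk$ via $M = \ker \phi$.

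First I would observe that any such $\phi$ is determined by the scalars $t_i = \phi(T_i)$ for $T_i \in \E{n}{m}$ and $t'_j = \phi(T'_j)$ for $T'_j \in \F{n}{m}$, since $\E{n}{m} \cup \F{n}{m} = \G{n}{m}$ generates $\A{n}{m}$ as a $\fk$-algebra. Conversely, specifying such scalars yields a well-defined homomorphism precisely when all defining relations are respected. By Corollary \ref{cor:tensorprod}, the defining ideal $\R{n}{m}$ lies entirely in the subring $\fk[\E{n}{m}]$ and the generators $T'_j \in \F{n}{m}$ are algebraically independent, so the only constraint is $(t_1, \ldots, t_s) \in \mathcal{V}(\R{n}{m})$, while $(t'_1, \ldots, t'_r)$ ranges freely over $\fk^r$. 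This parametrizes exactly the tuples $\underline{t} \in \mathcal{V}(\R{n}{m}) \times \fk^{|\F{n}{m}|}$ claimed in the statement.

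Finally, I would verify that for each such $\underline{t}$, the ideal $M_{\underline{t}}$ described in \eqref{eq:maxideal} is indeed $\ker \phi$. The inclusion $M_{\underline{t}} \subseteq \ker \phi$ is immediate from evaluation, and the reverse inclusion follows because $\A{n}{m}/M_{\underline{t}}$ is generated as a $\fk$-algebra by the images of the $T_i$ and $T'_j$, all of which reduce to scalars modulo $M_{\underline{t}}$. This forces $\A{n}{m}/M_{\underline{t}} \cong \fk$, so $M_{\underline{t}}$ is maximal. There is no substantive obstacle here: the result is essentially a bookkeeping exercise once Lemma \ref{lem:maximal} (which handles the $\fE{n}{m}$ factor) and Corollary \ref{cor:tensorprod} (which ensures $\R{n}{m}$ does not entangle the $\F{n}{m}$ variables) are in place. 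The one mild subtlety worth stating carefully is the Nullstellensatz step identifying the residue field with $\fk$, since this is what legitimizes parametrizing maximal ideals by points of an affine variety rather than by arbitrary evaluation morphisms to algebraic extensions.
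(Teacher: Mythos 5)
Your argument is correct and is in substance the same route the paper takes: both rest on the decomposition/presentation of Corollary \ref{cor:tensorprod} together with the Nullstellensatz over the algebraically closed field $\fk$, which is exactly the content of Lemma \ref{lem:maximal}. The only cosmetic difference is that the paper quotes the fact that maximal ideals of a tensor product $A_1 \otimes A_2$ have the form $M_1 \otimes A_2 + A_1 \otimes M_2$ and then applies Lemma \ref{lem:maximal} to the factor $\fE{n}{m}$, whereas you run the same Nullstellensatz correspondence directly on the full presentation $\fk[\G{n}{m}]/\R{n}{m}$ via $\fk$-algebra morphisms $\A{n}{m} \to \fk$ (noting, as you do implicitly, that the existence of the morphism attached to $\underline{t}$ is what rules out $M_{\underline{t}} = \A{n}{m}$); the bookkeeping is identical.
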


\begin{proof}
    The tensor product of two algebras 
    $A_1 \otimes A_2$ has all maximal ideals 
    of the form $M_1 \otimes A_2 + A_1 \otimes M_2$ 
    where each $M_i$ a maximal ideal in $A_i$, so 
    the result follows from Lemma~\ref{lem:maximal}. 
\end{proof}

\begin{corollary}\label{coro:max-spec-set}
    For any positive integers $m\geq n$, 
    we have that $\fE{n}{m}$ is the ring of regular functions on 
    $\mathcal{V}( \R{n}{m})$. As a set (and as an affine variety), we then have 
    \[
        \maxSpec(\A{n}{m}) 
        = \mathcal{V}(\R{n}{m}) \times \mathbb{A}_{\fk}^{|\F{n}{m}|} 
        = \maxSpec(\fE{n}{m}) \times \maxSpec(\fF{n}{m}). 
    \]
\end{corollary}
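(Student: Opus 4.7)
The plan is to derive this corollary as a direct consequence of Theorem~\ref{thm:Maxideals} together with a standard application of Hilbert's Nullstellensatz, leveraging the structural results already established for $\R{n}{m}$.

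First I would establish that $\fE{n}{m}$ is the ring of regular functions on $\mathcal{V}(\R{n}{m})$. By Corollary~\ref{cor:tensorprod}, the ideal $\R{n}{m}$ is prime in $\fk[\E{n}{m}]$, and in particular radical. Since $\fk$ is algebraically closed, the Nullstellensatz gives $I(\mathcal{V}(\R{n}{m})) = \sqrt{\R{n}{m}} = \R{n}{m}$, so the coordinate ring of the affine variety $\mathcal{V}(\R{n}{m})$ is precisely $\fk[\E{n}{m}]/\R{n}{m} = \fE{n}{m}$. This immediately identifies $\maxSpec(\fE{n}{m})$ with $\mathcal{V}(\R{n}{m})$ as a set, with maximal ideals given explicitly by the description in Lemma~\ref{lem:maximal}.

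Next I would handle the polynomial factor. Since $\fF{n}{m} = \fk[\F{n}{m}]$ is a polynomial ring in $|\F{n}{m}|$ variables over an algebraically closed field, the Nullstellensatz yields a bijection between $\maxSpec(\fF{n}{m})$ and $\mathbb{A}_{\fk}^{|\F{n}{m}|}$ via evaluation at the corresponding point. Combined with the previous paragraph, this gives the second claimed equality
\[
    \maxSpec(\fE{n}{m}) \times \maxSpec(\fF{n}{m})
    = \mathcal{V}(\R{n}{m}) \times \mathbb{A}_{\fk}^{|\F{n}{m}|}.
\]

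Finally, to obtain the first equality, I would invoke Theorem~\ref{thm:Maxideals}, which already classifies the maximal ideals of $\A{n}{m}$ in the form $M_{\underline{t}}$ with $\underline{t} \in \mathcal{V}(\R{n}{m}) \times \fk^{|\F{n}{m}|}$. The assignment $\underline{t} \mapsto M_{\underline{t}}$ gives the desired bijection between $\maxSpec(\A{n}{m})$ and $\mathcal{V}(\R{n}{m}) \times \mathbb{A}_{\fk}^{|\F{n}{m}|}$. Since there is no essential obstacle here beyond assembling these pieces, the only subtlety worth flagging is that the factorization of maximal ideals through the tensor product $\A{n}{m} \cong \fE{n}{m} \otimes \fF{n}{m}$ relies on $\fk$ being algebraically closed (so that the residue field at every maximal ideal is $\fk$ itself), which is ensured by our standing assumption.
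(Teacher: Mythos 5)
Your proposal is correct and follows essentially the same route as the paper: the identification of $\fE{n}{m}$ with the ring of regular functions on $\mathcal{V}(\R{n}{m})$ comes from $\R{n}{m}$ being prime (hence radical, via Corollary~\ref{cor:tensorprod}) together with the Nullstellensatz, and the set-theoretic equality is read off from the classification of maximal ideals in Theorem~\ref{thm:Maxideals}. You simply spell out the routine details that the paper's two-line proof leaves implicit.
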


\begin{proof}
    Since $\mathcal{V}(\R{n}{m})$ is an affine variety, the first claim follows. 
    The second claim is a direct consequence of Theorem~\ref{thm:Maxideals}. 
\end{proof}

As a consequence, 
we have $\maxSpec(\A{n}{m}) \subseteq \mathbb{A}_{\fk}^{|\G{n}{m}|}$. 
Although in the above corollary we preferred the notation $\mathbb{A}_{\fk}$ over $\fk$ 
due to the topology it suggests, we will not be making this distinction 
in the future. 

\subsection{Maximal ideals via evaluation maps}\label{subsec:max-ideals-evaluation}

In this section we establish a correspondence between certain maximal ideals 
in $\A{n}{m}$ and the maximal ideals of the polynomial ring $\fk[X]$ which, 
geometrically, are supported outside the axes. 

Let $X^{(i)} \coloneqq \{ x_1^{(i)}, x_2^{(i)}, \dots, x_m^{(i)}\}$ and set
$X \coloneqq X^{(1)} \cup \cdots \cup X^{(n)}$. Consider the polynomial ring 
$\fk[X]$ in commuting variables $x_j^{(i)}$ where $1 \leq i \leq n$ and 
$1 \leq j\leq m$. To any tuple ${\underline{a}} = (a_1,\dots,a_k)$ with 
$1\leq a_1 < a_2 < \dots <a_k\leq m$, we assign the monomial $X_{\underline{a}} \coloneqq 
x_{a_1}^{(1)}x_{a_2}^{(2)} \cdots x_{a_k}^{(k)}$.

\begin{theorem}\label{thm:Polynomial-Iso}
    The map defined on tableaux as 
    \begin{equation*}
        \begin{tikzcd}[row sep=0]
            \Omega \colon \A{n}{m} \ar[r] 
            & \fk[X]\\
            \phantom{\Omega \colon} T \ar[r, mapsto] 
            & \displaystyle\prod\limits_{1 \leq i \leq n,\, 1 \leq j \leq m} \left(x_j^{(i)}\right)^{\wt_j^{(i)}(T)}
        \end{tikzcd}
    \end{equation*}
    and extended linearly, is an injective algebra morphism with image 
    \begin{equation*}
        \operatorname{im}(\Omega) =  
        \fk[X_{\underline{a}} \mid \underline{a} \in \mathbb{Z}_{\geq0}^{k},\ 1\leq k \leq n, \; 1\leq a_1 < a_2 < \dots <a_k\leq m].
    \end{equation*}
\end{theorem}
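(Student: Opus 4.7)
The plan is to verify three things in sequence: that $\Omega$ is a well-defined algebra morphism, that its image is the claimed subalgebra, and that it is injective. The key input throughout will be the remark after Definition~\ref{def:colreadingword} (equivalent definition of $\star$), namely that the row weights are additive under the star product.

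First I would check $\Omega$ is an algebra morphism. Since $\A{n}{m}$ is the monoid algebra of $\SSYT_m^n$, it suffices to verify multiplicativity on tableaux. For $T, T' \in \SSYT_m^n$, the identity $\wt_j^{(i)}(T \star T') = \wt_j^{(i)}(T) + \wt_j^{(i)}(T')$ yields
\[
\Omega(T \star T') = \prod_{i,j} \bigl(x_j^{(i)}\bigr)^{\wt_j^{(i)}(T) + \wt_j^{(i)}(T')} = \Omega(T)\,\Omega(T'),
\]
and $\Omega(\emptyset) = 1$ since the empty tableau has all row weights equal to zero.

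Next I would identify the image. By Theorem~\ref{thm:finite generation}, $\A{n}{m}$ is generated by the set of columns $\G{n}{m}$, so $\operatorname{im}(\Omega)$ is generated by $\{\Omega(C) \mid C \in \G{n}{m}\}$. A column $C$ of height $k$ with entries $a_1 < a_2 < \cdots < a_k$ has $\wt_{a_i}^{(i)}(C) = 1$ and all other row weights equal to zero, hence $\Omega(C) = x_{a_1}^{(1)} x_{a_2}^{(2)} \cdots x_{a_k}^{(k)} = X_{\underline{a}}$. Conversely, every monomial $X_{\underline{a}}$ with $1 \leq a_1 < a_2 < \cdots < a_k \leq m$ arises this way, giving the claimed description of $\operatorname{im}(\Omega)$.

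The main step, and the one I expect to need the most care, is injectivity. Since $\SSYT_m^n$ is a $\fk$-basis of $\A{n}{m}$ and $\Omega$ sends tableaux to monomials of $\fk[X]$ (which are linearly independent), it suffices to show that $\Omega$ is injective on the monoid $\SSYT_m^n$. For this, I would argue that a semistandard Young tableau is uniquely determined by its collection of row weights $(\wt^{(i)}(T))_{i=1}^{n}$: within each row of $T$, entries are arranged in weakly increasing order from left to right, so the $i$-th row is completely recovered from the tuple $\wt^{(i)}(T)$ (it contains exactly $\wt_j^{(i)}(T)$ copies of $j$ in that order). Hence if $\Omega(T) = \Omega(T')$, matching exponents of each variable $x_j^{(i)}$ forces $\wt_j^{(i)}(T) = \wt_j^{(i)}(T')$ for all $i, j$, and therefore $T = T'$. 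This gives injectivity of $\Omega$ on the basis, and hence of the linear extension, completing the proof.
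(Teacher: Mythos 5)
Your proposal is correct and follows essentially the same route as the paper: multiplicativity from additivity of row weights under $\star$, injectivity from the fact that a semistandard tableau is determined by its row weights (distinct tableaux hence go to distinct monomials, which are linearly independent), and the image computed by evaluating $\Omega$ on column generators, which give exactly the monomials $X_{\underline{a}}$. Your spelled-out reduction from injectivity on the tableau basis to injectivity of the linear extension is a useful elaboration of a step the paper leaves implicit, but it is not a different argument.
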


\begin{proof}
    It is easy to check that $\Omega$ is indeed a unital algebra morphism. 
    Injectivity follows from observing that given tableaux $S, T \in \A{n}{m}$ 
    we have $\Omega(S) = \Omega(T)$ if and only if 
    $\wt_j^{(i)}(S) = \wt_j^{(i)}(T)$ 
    for all $1 \leq i \leq n$ and all $1 \leq j \leq m$, which implies $S = T$ 
    because $S, T \in \SSYT_{m}^{n}$. 
    The image is as claimed because given a tuple ${\underline{a}}=(a_1,\dots,a_k)$ 
    with $a_1 < \dots < a_k$ and $1 \leq k \leq n$ we have 
    \begin{equation*}
        \Omega\left(\  
        \hackcenter{
            \tableau{
                a_1\\
                \vdots\vspace{2mm}\\
                a_k}
        }
        \ \right) 
        = X_{\underline{a}} = x_{a_1}^{(1)} \cdots x_{a_k}^{(k)}. 
        \eqno\qed 
    \end{equation*}
    \hideqed
\end{proof}

\begin{remark}\label{rema:compatible-total-orders}
    We can totally order the monomials in $\fk[X]$ lexicographically by first comparing the 
    alphabets and then comparing the variables. As our notation suggests, 
    we will say that 
    $X^{(i)}$ is less than  $X^{(\ell)}$ when $i < \ell$, 
    and for any fixed $i$ we will say that $x_j^{(i)}$ is less than $x_k^{(i)}$ 
    when $j < k$. Explicitly, a monomial 
    $x_{j_1}^{(i_1)} \cdots x_{j_r}^{(i_r)}$ is less than a 
    monomial $x_{k_1}^{(\ell_1)} \cdots x_{k_s}^{(\ell_s)}$, denoted $x_{j_1}^{(i_1)} \cdots x_{j_r}^{(i_r)}<x_{k_1}^{(\ell_1)} \cdots x_{k_s}^{(\ell_s)}$, when either: 
    \begin{enumerate}
        \item $r < s$, or 
        \item $r = s$ and there exists $u$ such that $i_v = \ell_v$ for 
            $1 \leq v \leq u$ with $i_{u+1} < \ell_{u+1}$, or 
        \item $r = s$, $i_u = \ell_u$ for all $u$, and there exists 
            $v$ such that $j_w = k_w$ for $1 \leq w \leq v$ 
            with $j_{v+1} < k_{v+1}$. 
    \end{enumerate}
    In particular, given $T, S \in \SSYT_m^n$ ordered by rows as in 
    Remark~\ref{rem:roworder}, if $T$ is less than or equal to $S$ then 
    $\Omega(T) \leq \Omega(S)$. In this sense, the total ordering on 
    monomials in $\A{n}{m}$ given in Remark~\ref{rem:roworder} 
    is compatible with the total ordering on $\fk[X]$. 
\end{remark}

In what follows, it will be convenient to identify $\fk^{nm}$ with 
$n\times m$ matrices over $\fk$ and index ${\underline{\alpha}}= (\alpha_{1,1},\dots,\alpha_{1,m},\alpha_{2,1},\dots, \alpha_{2,m},\dots,\alpha_{n,m}) \in \fk^{nm}$ 
as a matrix with $(i,j)^{th}$ entry $\alpha_{i,j}$. 

\begin{definition}\label{defi:ordinary-point}
    For any positive integers $m \geq n$, let 
    \begin{align*}
        I(n,m) & \coloneqq \{ {\underline{\alpha}} \in \fk^{nm} \mid \alpha_{i,j} \neq 0\, 
        \text{ for all } 1\leq i \leq n \text{ and all } 1\leq j \leq m \}, 
        \\ 
        J(n,m) & \coloneqq \{ 
        \underline{t} 
        \in \mathcal{V}(\R{n}{m}) \times \fk^{|\F{n}{m}|} \mid 
        t_i \neq 0 \, \text{ for all } i \}. 
    \end{align*}
    We say that a tuple ${\underline{\alpha}} \in \fk^{nm}$ or 
    $\underline{t} \in \mathcal{V}(\R{n}{m}) \times \fk^{|\F{n}{m}|}$ 
    is \newword{ordinary} if 
    ${\underline{\alpha}}  \in I(n,m)$ or $\underline{t} \in J(n,m)$, respectively.
    We call a maximal ideal  $N_{{\underline{\alpha}} }$ in $\fk[X]$ or 
    $M_{\underline{t}}$ in $\A{n}{m}$ \newword{ordinary} if ${{\underline{\alpha}}}$ or 
    $\underline{t}$ are an ordinary tuple, respectively. 
    We let $\mathcal{M}_{ord}(n,m) \coloneqq \{ M_{\underline{t}} \mid \underline{t} \in J(n,m)\}$ 
    be the set of ordinary maximal ideals of $\A{n}{m}$. 
\end{definition}

We have already observed the equality of sets $\fk^{nm} = \maxSpec(\fk[X])$ 
and proven the equality of sets $\mathcal{V}(\R{n}{m}) \times \fk^{|\F{n}{m}|} = \maxSpec(\A{n}{m})$ 
in Corollary~\ref{coro:max-spec-set}. As done in 
Definition~\ref{defi:ordinary-point}, we will explicitly distinguish 
between the tuples ${{\underline{\alpha}}}$ and $\underline{t}$ and the associated maximal 
ideals $N_{{{\underline{\alpha}}}} \in \maxSpec(\fk[X])$ and 
$M_{\underline{t}} \in \maxSpec(\A{n}{m})$ which they index. 

For each ${{\underline{\alpha}}} = (\alpha_{i,j}) \in \fk^{nm}$,
with $1\leq i \leq n$ and $1\leq j \leq m$, 
consider the evaluation homomorphism uniquely defined by the assignment
\[
    \begin{tikzcd}[row sep=0]
        \ev_{{\underline{\alpha}}} \colon \fk[X] \ar[r] 
        & \fk[X]/N_{{{\underline{\alpha}}}} \cong \fk\\
        \phantom{\ev_{{\underline{\alpha}}} \colon} x_j^{(i)} \ar[r, mapsto] 
        & \alpha_{i,j}. 
    \end{tikzcd}
\]
The composition 
$\widetilde{\ev}_{{\underline{\alpha}}} \coloneqq \ev_{{\underline{\alpha}}} \circ \; \Omega$ 
induces an evaluation homomorphism on $\A{n}{m}$ by Theorem~\ref{thm:Polynomial-Iso}. 
Explicitly, it is given on each $T \in \SSYT_m^n$ by 
\[
    \begin{tikzcd}[row sep=0]
        \widetilde{\ev}_{{\underline{\alpha}}} \colon \A{n}{m} \ar[r] 
        & \fk\\
        \phantom{\widetilde{\ev}_{{\underline{\alpha}}} \colon} T \ar[r, mapsto] 
        & \displaystyle\prod\limits_{1 \leq i \leq n,\, 1 \leq j \leq m} 
        \left(\alpha_{i,j}\right)^{\wt_j^{(i)}(T)}
    \end{tikzcd}
\]
and extended linearly. 

Suppose $\G{n}{m}=\{T_1,\dots,T_d\}$ where $d=|\E{n}{m}|+|\F{n}{m}|=\sum_{k=1}^n \binom{m}{k}$. 
Let $\underline{t} \in \mathcal{V}(\R{n}{m}) \times \fk^{|\F{n}{m}|} \subseteq \fk^{|\G{n}{m}|}$, 
and let $M_{\underline{t}}$ be a maximal ideal of $\A{n}{m}$. 
Denote by $\pi_{\underline{t}} : \A{n}{m} \to \A{n}{m}/M_{\underline{t}}$ 
the canonical quotient map. 
\[
    \begin{tikzcd}[row sep=0]
        \pi_{\underline{t}} \colon \A{n}{m} \ar[r] 
        & \A{n}{m}/M_{\underline{t}} \cong \fk\\
        \phantom{\pi_{\underline{t}} \colon} T_i \ar[r, mapsto] 
        & t_i 
    \end{tikzcd}
\]

\begin{proposition}\label{prop:constantterm}
    Suppose $\underline{t}$ is ordinary. 
    Then every nonzero $f \in M_{\underline{t}}$ has a nonzero constant term. 
\end{proposition}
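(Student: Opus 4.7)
The plan is to exploit the quotient map $\pi_{\underline{t}}\colon\A{n}{m}\to\fk$, whose kernel is $M_{\underline{t}}$. Writing $f=\sum_T c_T T$ in the tableau basis, the constant term is precisely $c_\emptyset$, and we wish to show that $c_\emptyset=0$ combined with $f\in M_{\underline{t}}$ forces $f=0$. The starting point is that for ordinary $\underline{t}$, every tableau $T$ satisfies $\pi_{\underline{t}}(T)\neq 0$: any $T\in\SSYT_m^n$ admits a column factorization $T=T_{i_1}\star\cdots\star T_{i_k}$, and $\pi_{\underline{t}}(T)=t_{i_1}\cdots t_{i_k}$ is well-defined (since $\underline{t}\in\mathcal{V}(\R{n}{m})$ satisfies the product relations) and nonzero (since every coordinate of $\underline{t}$ is nonzero). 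Combined with $\pi_{\underline{t}}(\emptyset)=1$, the constraint $\pi_{\underline{t}}(f)=0$ thus reads $c_\emptyset=-\sum_{T\neq\emptyset}c_T\,\pi_{\underline{t}}(T)$.

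To leverage this, I would combine the above with the natural grading on $\A{n}{m}$ by number of columns (so $\emptyset$ has degree $0$, columns have degree $1$, and $\star$ is additive) and the minimal generating set $\BB{n}{m}$ of $\R{n}{m}$ furnished by Corollary~\ref{cor:enumeration}. Each generator $T_i-t_i$ of $M_{\underline{t}}$ has nonzero constant term $-t_i$ under the ordinary hypothesis, and the goal is to transfer this property to arbitrary elements. The natural strategy is a descending induction on the maximum column count in the support of $f$: by subtracting a carefully chosen $\A{n}{m}$-multiple of some $T_j-t_j$ from $f$, one reduces the leading column count while controlling the effect on the constant term, ultimately expressing any $f\in M_{\underline{t}}$ with $c_\emptyset=0$ as a sum of terms that must all individually vanish.

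The main obstacle is handling the cancellation among distinct tableaux that share the same $\pi_{\underline{t}}$-value, since such coincidences are genuinely forced by $\R{n}{m}$. The key observation is that these coincidences are governed \emph{exactly} by $\R{n}{m}$, which has the explicit finite presentation developed in Section~\ref{subsec:enumeration}. I therefore expect the crux of the argument to be a normal form reduction with respect to $\BB{n}{m}$, converting the claim into a combinatorial statement about how the ordinary tuple $\underline{t}$ prevents the constant term from vanishing after full reduction. Should this direct combinatorial route become cumbersome, an alternative is to transfer the problem through the injective embedding $\Omega\colon\A{n}{m}\hookrightarrow\fk[X]$ from Theorem~\ref{thm:Polynomial-Iso} and argue in $\fk[X]$ with the corresponding ideal supported outside the coordinate hyperplanes, where the explicit monomial description of $\operatorname{im}(\Omega)$ together with $\underline{\alpha}\in I(n,m)$ makes the constant-term analysis more transparent.
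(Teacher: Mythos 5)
Your plan sets out to prove that $c_\emptyset=0$ together with $f\in M_{\underline{t}}$ forces $f=0$, i.e.\ that $M_{\underline{t}}\cap\ker\pi_{(0,\dots,0)}=0$, and it defers the crux to a normal-form reduction against $\BB{n}{m}$ (or a transfer through $\Omega$ into $\fk[X]$) that would show the constant term cannot vanish. That crux cannot be supplied, because the assertion you are reducing to is false under the literal reading you adopt (constant term $=$ coefficient of $\emptyset$, and $M_{\underline{t}}=\ker\pi_{\underline{t}}$). Concretely, for any column $T\in\G{n}{m}$ with coordinate $t\neq 0$ in $\underline{t}$, the element $f=T\star T-t\,T=T\star(T-t)$ is nonzero, lies in $M_{\underline{t}}$ since $\pi_{\underline{t}}(f)=t^2-t\cdot t=0$, and has zero coefficient of $\emptyset$; similarly $t_2T_1-t_1T_2$ works for any two distinct columns. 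Structurally, ``every nonzero $f\in M_{\underline{t}}$ has nonzero constant term'' says $M_{\underline{t}}\cap M_{(0,\dots,0)}=0$, and two distinct maximal ideals of an integral domain that is not a field always meet nontrivially (their product is a nonzero subset of their intersection). So no descending induction on column counts, no Gr\"obner reduction modulo $\BB{n}{m}$, and no passage to $\fk[X]$ with $\underline{\alpha}\in I(n,m)$ can close the gap: the cancellations you flag as the ``main obstacle'' are not a technical hurdle but exactly the point of failure.

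For comparison, the paper's own proof is the single observation that $f$ has zero constant term if and only if $\pi_{(0,\dots,0)}(f)=0$, i.e.\ $f\in M_{(0,\dots,0)}$. That observation only supports the weaker, true statement that for ordinary $\underline{t}$ the ideal $M_{\underline{t}}$ is not contained in $M_{(0,\dots,0)}$ --- equivalently, $M_{\underline{t}}$ contains elements with nonzero constant term, as each generator $T_i-t_i$ already shows --- and does not establish the ``every nonzero $f$'' claim. So the defect lies in the statement itself rather than in your reading of it; a provable reformulation would be $M_{\underline{t}}\neq M_{(0,\dots,0)}$ for ordinary $\underline{t}$, or the assertion about the generators, either of which follows immediately from $t_i\neq 0$. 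As written, your proposal attempts to prove more than is true, and the strategy necessarily fails.
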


\begin{proof}
   Follows from the fact that a polynomial $f \in \A{n}{m}$ has zero constant term 
    if and only if $\pi_{(0,\dots,0)}(f)=0$. 
\end{proof}

Consider the sets $\Ev(n,m)=\{ \ev_{{\underline{\alpha}}} \mid {{\underline{\alpha}}} \in \fk^{nm}\}$ 
and $\Quo(n,m)=\{ \pi_{\underline{t}}\mid 
\underline{t}\in \mathcal{V}(\R{n}{m}) \times \fk^{|\F{n}{m}|}\}$ with the binary operations 
\begin{equation}\label{eq:binaryoperation}
    \ev_{{\underline{\alpha}}} \circledast \ev_{\underline{\beta}} \coloneqq \ev_{{\underline{\alpha\beta}}} 
    \qquad \text{and} \qquad 
    \pi_{\underline{t}} \circledast \pi_{\underline{s}} \coloneqq \pi_{\underline{t}\underline{s}} 
\end{equation}
where $\underline{\alpha\beta} \coloneqq (\alpha_{i,j}\beta_{i,j})_{1\leq i \leq n, 1\leq j\leq m}$ and 
$\underline{t}\underline{s} \coloneqq (t_is_i)_{1\leq i \leq d}$ 
are the entrywise multiplication of the corresponding tuples. 
It is easy to see that $\Ev(n,m)$ and $\Quo(n,m)$ are monoids but 
not groups under this operation, 
since for any nonordinary ${{\underline{\alpha}}}$ or $\underline{t}$ the corresponding 
map $\ev_{{{\underline{\alpha}}}}$ or $\pi_{\underline{t}}$ will not be invertible 
in $\Ev(n,m)$ or $\Quo(n,m)$, respectively. 

Consider then the following subsets of $\Ev(n,m)$ and $\Quo(n,m)$: 
\begin{align*}
    \Ev_0(n,m) &\coloneqq \{\ev_{{\underline{\alpha}}} \in \Ev(n,m) \mid {{\underline{\alpha}}} \in I(n,m)\}, 
    \\
    \Quo_0(n,m) &\coloneqq \{\pi_{\underline{t}} \in \Quo(n,m) \mid 
    \underline{t} \in J(n,m)\}. 
\end{align*}

Now, $\Ev_0(n,m)$ and $\Quo_0(n,m)$ are commutative groups under the binary operation 
in~\eqref{eq:binaryoperation}, with 
identity elements $\ev_{(1,\dots,1)}$ and $\pi_{(1,\dots,1)}$, respectively. 
The following is an immediate consequence. 

\begin{proposition}\label{prop:maxidealgroup}
    The set $\mathcal{M}_{ord}(n,m)$ is a commutative group under the binary operation 
    $M_{\underline{t}} \circledast M_{\underline{t'}} \coloneqq M_{\underline{tt'}}$ 
    and identity element $M_{(1,\dots,1)}$. 
    In particular, $\mathcal{M}_{ord}(n,m) \cong \Quo_0(n,m)$. 
\end{proposition}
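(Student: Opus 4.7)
The plan is to reduce the statement to the already-observed fact that $\Quo_0(n,m)$ is a commutative group under $\circledast$, by showing that the bijection between ordinary maximal ideals of $\A{n}{m}$ and ordinary tuples in $\mathcal{V}(\R{n}{m})\times \fk^{|\F{n}{m}|}$ guaranteed by Theorem \ref{thm:Maxideals} transports group structure correctly. The only substantive point is to check that the coordinatewise product $\underline{t}\underline{t'}$ of two tuples in $J(n,m)$ remains in $J(n,m)$, so that $\circledast$ is well-defined on $\mathcal{M}_{ord}(n,m)$.

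First I would verify closure. If $\underline{t},\underline{t'}\in J(n,m)$, then $\underline{t}\underline{t'}$ has all nonzero entries by construction. To see that it still lies in $\mathcal{V}(\R{n}{m})$, recall from Corollary \ref{cor:tensorprod} and its proof that $\R{n}{m}$ is generated by quadratic binomials of the form $T_1T_2-T_3T_4$, with $T_1\star T_2=T_3\star T_4$. Writing $t_T$ for the coordinate of $\underline{t}$ indexed by $T\in \G{n}{m}$, the assumption that $\underline{t},\underline{t'}\in \mathcal{V}(\R{n}{m})$ gives
\begin{equation*}
t_{T_1}t_{T_2}=t_{T_3}t_{T_4} \qquad \text{and} \qquad t'_{T_1}t'_{T_2}=t'_{T_3}t'_{T_4}.
\end{equation*}
Multiplying these two identities and regrouping yields $(t_{T_1}t'_{T_1})(t_{T_2}t'_{T_2})=(t_{T_3}t'_{T_3})(t_{T_4}t'_{T_4})$, which is precisely the relation evaluated at $\underline{t}\underline{t'}$. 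Hence $\underline{t}\underline{t'}\in J(n,m)$, and so $M_{\underline{t}}\circledast M_{\underline{t'}}=M_{\underline{t}\underline{t'}}\in \mathcal{M}_{ord}(n,m)$.

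Next, I would check the group axioms. Associativity and commutativity descend from the coordinatewise multiplication in $\fk$. The identity is $M_{(1,\dots,1)}$: the tuple $(1,\dots,1)$ has all nonzero entries and satisfies every relation $T_1T_2=T_3T_4$ trivially, hence lies in $J(n,m)$, and clearly $\underline{t}\cdot(1,\dots,1)=\underline{t}$. For inverses, given $\underline{t}\in J(n,m)$ set $\underline{t}^{-1}\coloneqq (t_T^{-1})_{T\in\G{n}{m}}$; the entries are nonzero, and $t_{T_1}t_{T_2}=t_{T_3}t_{T_4}$ immediately implies $t_{T_1}^{-1}t_{T_2}^{-1}=t_{T_3}^{-1}t_{T_4}^{-1}$, so $\underline{t}^{-1}\in J(n,m)$ and $M_{\underline{t}}\circledast M_{\underline{t}^{-1}}=M_{(1,\dots,1)}$.

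Finally, for the isomorphism $\mathcal{M}_{ord}(n,m)\cong \Quo_0(n,m)$, the assignment $M_{\underline{t}}\mapsto \pi_{\underline{t}}$ is a bijection by Theorem \ref{thm:Maxideals} together with the definitions of $J(n,m)$ and $\Quo_0(n,m)$, and it intertwines the two $\circledast$ operations by the very definition in \eqref{eq:binaryoperation}. The main (minor) obstacle is bookkeeping: keeping track of the indexing of tuples by $\G{n}{m}=\E{n}{m}\sqcup \F{n}{m}$, and noting that the $\F{n}{m}$-coordinates are unconstrained so the only relations to check are those inside $\R{n}{m}\subseteq \fk[\E{n}{m}]$. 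Once this is unwound, the proof is a one-line algebraic identity plus transport of structure.
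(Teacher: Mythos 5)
Your proposal is correct and follows essentially the same route as the paper, which simply declares the result an immediate consequence of $\Quo_0(n,m)$ being a commutative group under $\circledast$ and the bijection of Theorem \ref{thm:Maxideals}. You additionally spell out the closure and inverse checks using the binomial form of the relations in $\R{n}{m}$, details the paper leaves implicit.
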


\begin{definition}
    Suppose $\G{n}{m}=\{T_1,\dots,T_d\}$. 
    We define $\Psi : \fk^{nm} \to \fk^{|\G{n}{m}|}$ as follows. 
    \[
        \begin{tikzcd}[row sep=0]
            \Psi \colon \fk^{nm} \ar[r] 
            & \fk^{|\G{n}{m}|}\\
            \phantom{\Psi \colon} {{\underline{\alpha}}} \ar[r, mapsto] 
            & (\widetilde{\ev}_{{\underline{\alpha}}}(T_1), \dots, \widetilde{\ev}_{{\underline{\alpha}}}(T_d))
        \end{tikzcd}
    \]
\end{definition}

\begin{example}\label{exam:alpha-psi-2-T-3}
    Let $\alpha_{i, a}=a^i$ which induces the map 
    $\widetilde{\ev}_{{\underline{\alpha}}}: \A{2}{3}\rightarrow \fk$ defined on $\G{2}{3}$ as follows. 
    \[
        \hackcenter{\tableau{1}} \mapsto 1 \quad 
        \hackcenter{\tableau{2}} \mapsto 2 \quad     
        \tableau{1\\3} \mapsto 1 \cdot 3^2=9 \quad 
        \tableau{2\\3} \mapsto 2 \cdot 3^2=18\quad
      \hackcenter{\tableau{3}} \mapsto 3 \quad 
        \tableau{1\\2} \mapsto 1 \cdot 2^2=4 
    \]
    This coincides with the projection 
    $\pi_{\Psi({{\underline{\alpha}}})} : \A{2}{3}/M_{(1,2,9,18,3,4)} \to \fk$ 
    for the maximal ideal 
    \[
        M_{(1,2,9,18,3,4)} = \left\langle 
        \hackcenter{\tableau{1}}-1, 
        \hackcenter{\tableau{2}}-2, 
        \hackcenter{\tableau{1\\3}}-9, 
        \hackcenter{\tableau{2\\3}}-18,
        \hackcenter{\tableau{3}}-3, 
        \hackcenter{\tableau{1\\2}}-4 
        \right\rangle. 
    \]
    Moreover, note that there are entries of ${{\underline{\alpha}}}$ that are inconsequential. 
    For example, the value of $\alpha_{2, 1}$ is never used to compute $\Psi({{\underline{\alpha}}})$ 
    because $x_1^{(2)}$ does not divide $\Omega(T)$ for any $T \in \SSYT_3^2$. 
\end{example}

\begin{remark}\label{rem:homomorphism}
    Observe that the maps $\ev_{{\underline{\alpha}}}$, $\Omega$, and 
    $\pi_{\underline{t}}$ are linear and multiplicative on tableaux. Namely 
    let $a\in \fk$ be a scalar, ${{\underline{\alpha}}} \in \fk^{nm}$ and 
    $\underline{t} \in \mathcal{V}(\R{n}{m})\times\fk^{|\F{n}{m}|}$ be tuples, 
    and $T,S \in \SSYT_m^n$, then the following equalities hold: 
    \begin{align*}
        \widetilde{\ev}_{{\underline{\alpha}}}(aT+S) &=a\widetilde{\ev}_{{\underline{\alpha}}}(T) +\widetilde{\ev}_{{\underline{\alpha}}}(S)&
        \widetilde{\ev}_{{\underline{\alpha}}}(T\star S)&=\widetilde{\ev}_{{\underline{\alpha}}}(T)\widetilde{\ev}_{{\underline{\alpha}}}(S)\\
        \pi_{\underline{t}}(aT+S)&=a\pi_{\underline{t}}(T)+\pi_{\underline{t}}(S) &
        \pi_{\underline{t}}(T\star S)&=\pi_{\underline{t}}(T)\pi_{\underline{t}}(S).
    \end{align*}
    However, note that for ${\underline{\beta}} \in \fk^{nm}$ and 
    $\underline{s} \in \mathcal{V}(\R{n}{m})\times\fk^{|\F{n}{m}|}$  
    as well as an 
    arbitrary element $f \in \A{n}{m}$, it is not true that the following 
    equalities hold: 
    \begin{align}\label{eq:falsehom}
        \widetilde{\ev}_{\underline{
        \alpha\beta}}(f) = \widetilde{\ev}_{{{\underline{\alpha}}}}(f)\widetilde{\ev}_{{\underline{\beta}}}(f) 
        & & 
        \pi_{\underline{ts}}(f)=\pi_{\underline{t}}(f)\pi_{\underline{s}}(f). 
    \end{align}
    In the particularly special case when $T$ is a column tableau, 
    the equalities~\eqref{eq:falsehom} do hold. This observation will be 
    key in justifying why the map $\Psi_0$ below is a group homomorphism. 
\end{remark}

\begin{lemma}\label{lem:evaluation-alpha-omega-is-projection-omega-alpha}
    Let ${{\underline{\alpha}}} \in \fk^{nm}$, then 
    $\Psi({{\underline{\alpha}}}) \in \mathcal{V}(\R{n}{m}) \times \fk^{|\F{n}{m}|}$ 
    and $\widetilde{\ev}_{{\underline{\alpha}}} = \pi_{\Psi({\underline{\alpha}})}$. 
    In particular, $\ev_{{\underline{\alpha}}}(\Omega(f)) = \pi_{\Psi({\underline{\alpha}})}(f)$ 
    for all $f \in \A{n}{m}$. 
\end{lemma}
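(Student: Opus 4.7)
The plan is to prove the two assertions in sequence and then deduce the ``in particular'' from the definition $\widetilde{\ev}_{{\underline{\alpha}}} \coloneqq \ev_{{\underline{\alpha}}} \circ \Omega$.

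First I would verify that $\Psi({\underline{\alpha}})$ lies in $\mathcal{V}(\R{n}{m}) \times \fk^{|\F{n}{m}|}$, which is the statement that requires any actual work. By Corollary~\ref{cor:tensorprod}, the ideal $\R{n}{m}$ is generated by product relations, which by the minimal generating set description coincide (up to sign) with the elements of $\BB{n}{m}$, each of the form $T \star T' - T'' \star T'''$ for columns $T,T',T'',T''' \in \G{n}{m}$. Since $T \star T' = T'' \star T'''$ already holds in $\A{n}{m}$, and since $\widetilde{\ev}_{{\underline{\alpha}}}$ is multiplicative on tableaux (see Remark~\ref{rem:homomorphism}), evaluating yields
\[
\widetilde{\ev}_{{\underline{\alpha}}}(T)\,\widetilde{\ev}_{{\underline{\alpha}}}(T') = \widetilde{\ev}_{{\underline{\alpha}}}(T \star T') = \widetilde{\ev}_{{\underline{\alpha}}}(T'' \star T''') = \widetilde{\ev}_{{\underline{\alpha}}}(T'')\,\widetilde{\ev}_{{\underline{\alpha}}}(T''').
\]
Unwinding the definition of $\Psi$, this says precisely that the coordinate tuple $\Psi({\underline{\alpha}})$ satisfies every generator of $\R{n}{m}$, and hence lies in the zero locus $\mathcal{V}(\R{n}{m})$. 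The remaining coordinates (those indexed by $\F{n}{m}$) are unconstrained, so $\Psi({\underline{\alpha}}) \in \mathcal{V}(\R{n}{m}) \times \fk^{|\F{n}{m}|}$, as required. In particular, the maximal ideal $M_{\Psi({\underline{\alpha}})}$ from Theorem~\ref{thm:Maxideals} and the quotient map $\pi_{\Psi({\underline{\alpha}})}$ are well defined.

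Next, to show $\widetilde{\ev}_{{\underline{\alpha}}} = \pi_{\Psi({\underline{\alpha}})}$, I would observe that both maps are $\fk$-algebra homomorphisms $\A{n}{m} \to \fk$: the latter is the canonical quotient, and the former is the composition of two algebra homomorphisms by Theorem~\ref{thm:Polynomial-Iso}. By Theorem~\ref{thm:finite generation}, the set $\G{n}{m} = \{T_1,\dots,T_d\}$ generates $\A{n}{m}$ as a $\fk$-algebra, so it suffices to check agreement on these generators. But by the very definition of $\Psi$ and of $M_{\Psi({\underline{\alpha}})}$, the $i$-th coordinate of $\Psi({\underline{\alpha}})$ equals $\widetilde{\ev}_{{\underline{\alpha}}}(T_i)$, and therefore $\pi_{\Psi({\underline{\alpha}})}(T_i) = \widetilde{\ev}_{{\underline{\alpha}}}(T_i)$ for every $i$, which closes the argument.

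Finally, the ``in particular'' statement is immediate: for any $f \in \A{n}{m}$,
\[
\ev_{{\underline{\alpha}}}(\Omega(f)) = \widetilde{\ev}_{{\underline{\alpha}}}(f) = \pi_{\Psi({\underline{\alpha}})}(f).
\]
I do not anticipate any real obstacle here; the only subtlety is conceptual, namely recognizing that Remark~\ref{rem:homomorphism} makes $\widetilde{\ev}_{{\underline{\alpha}}}$ multiplicative on tableaux (so it respects product relations despite the cautionary failure of~\eqref{eq:falsehom}), and then using that both maps are algebra homomorphisms determined by their values on the column generators.
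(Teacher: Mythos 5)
Your proposal is correct and follows essentially the same route as the paper: you verify that $\Psi({\underline{\alpha}})$ satisfies each product-relation generator of $\R{n}{m}$ by using the multiplicativity of $\widetilde{\ev}_{{\underline{\alpha}}}$ on tableaux from Remark~\ref{rem:homomorphism}, and then identify $\widetilde{\ev}_{{\underline{\alpha}}}$ with $\pi_{\Psi({\underline{\alpha}})}$ by checking agreement on the column generators $\G{n}{m}$. Your added remark that both maps are algebra homomorphisms (so checking on generators suffices) just makes explicit a step the paper leaves implicit; the appeal to $\BB{n}{m}$ is unnecessary but harmless.
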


\begin{proof}
    Let 
    $\Psi({\underline{\alpha}}) = (t_1,\dots,t_{|\E{n}{m}|},t'_1,\dots,t'_{|\F{n}{m}|}) 
    \in \fk^{|\E{n}{m}|} \times \fk^{|\F{n}{m}|}$, 
    $\G{n}{m} = \{ T_1, \dots, T_d\}$, and suppose 
    $T_i \star T_j - T_k \star T_\ell \in \R{n}{m}$ for some distinct 
    $1\leq i,j,k,\ell \leq d$. Then, by Remark~\ref{rem:homomorphism}
    \[t_it_j 
    = \widetilde{\ev}_{\underline{\alpha}}(T_i)\widetilde{\ev}_{\underline{\alpha}}(T_j) 
    = \widetilde{\ev}_{\underline{\alpha}}(T_i\star T_j) 
    = \widetilde{\ev}_{\underline{\alpha}}(T_k\star T_\ell) 
    = \widetilde{\ev}_{\underline{\alpha}}(T_k)\widetilde{\ev}_{\underline{\alpha}}(T_\ell) 
    = t_kt_\ell,\] so 
    $(t_1,\dots,t_{|\E{n}{m}|}) \in \mathcal{V}(\R{n}{m})$ 
    and the first claim holds. 
    To finish the proof, it suffices to check the last equality on the 
    generators. 
    Since for all $1 \leq i \leq d$ we indeed have 
    $\pi_{\Psi({\underline{\alpha}})}(T_i) = \Psi({\underline{\alpha}})_i 
    = \widetilde{\ev}_{{\underline{\alpha}}}(T_i) = \ev_{\underline{\alpha}}(\Omega(T_i))$, and 
    the result follows. 
\end{proof}

The following establishes an equivalence between the ordinary maximal ideals 
in the tableaux algebra $\A{n}{m}$ and those in the polynomial ring $\fk[X]$. 

\begin{theorem}\label{thm:EvalsMaxIdeal}
    The map $\Psi_0: \Ev_0(n,m) \to \Quo_0(n,m)$ defined by 
    $\Psi_0(\ev_{\underline{\alpha}}) \coloneqq \pi_{\Psi({\underline{\alpha}})}$ is a surjective 
    group homomorphism with kernel 
    \[
        \ker\Psi_0 = \{ \ev_{\underline{\alpha}} |\; {\underline{\alpha}} \in I(n,m) \text{ and } 
        \alpha_{i,j} = 1 \text{ for all }  i\leq j\}. 
    \]
    Consequently 
    $\mathcal{M}_{ord}(n,m) \cong \Ev_0(n,m)/\ker\Psi_0$. 
\end{theorem}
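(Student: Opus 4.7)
The plan is to verify the three claims of the theorem in order---well-definedness together with the homomorphism property, kernel description, and surjectivity---and then conclude by the first isomorphism theorem together with Proposition \ref{prop:maxidealgroup}. Well-definedness follows from Lemma \ref{lem:evaluation-alpha-omega-is-projection-omega-alpha}: for $\underline{\alpha} \in I(n, m)$, each coordinate $\Psi(\underline{\alpha})_T = \prod_i \alpha_{i, a_i}$ associated to a column $T$ with entries $a_1 < \cdots < a_k$ is a product of nonzero entries, so $\Psi(\underline{\alpha}) \in J(n, m)$ and $\pi_{\Psi(\underline{\alpha})} \in \Quo_0(n, m)$. The homomorphism property reduces to a one-line check: for each column $T$,
\[
    \Psi(\underline{\alpha\beta})_T = \prod_i \alpha_{i, a_i}\beta_{i, a_i} = \Psi(\underline{\alpha})_T \cdot \Psi(\underline{\beta})_T,
\]
so componentwise $\Psi(\underline{\alpha\beta}) = \Psi(\underline{\alpha})\Psi(\underline{\beta})$ and $\Psi_0$ respects $\circledast$.

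For the kernel, $\ev_{\underline{\alpha}} \in \ker \Psi_0$ if and only if $\Psi(\underline{\alpha})_T = 1$ for every $T \in \G{n}{m}$. I would argue by induction on the height $k$ of the column: the base case $k = 1$ forces $\alpha_{1, a} = 1$ for all $a$, and the inductive step uses the column of height $k$ with entries $1, 2, \ldots, k-1, a$ (for $a \geq k$), which yields $\alpha_{1, 1} \alpha_{2, 2} \cdots \alpha_{k-1, k-1} \alpha_{k, a} = 1$. Combined with the inductive hypothesis, this forces $\alpha_{k, a} = 1$. This exhausts exactly the pairs $(i, j)$ with $i \leq j$, giving the claimed kernel, while the remaining entries $\alpha_{i, j}$ with $j < i$ never appear in any $\Psi(\underline{\alpha})_T$ and so can take arbitrary nonzero values.

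The main obstacle is surjectivity. Given $\underline{t} \in J(n, m)$, I write $t_C$ for the coordinate of $\underline{t}$ indexed by a column $C \in \G{n}{m}$ and define
\[
    \alpha_{i, j} \coloneqq \frac{t_{(1, 2, \ldots, i-1, j)}}{t_{(1, 2, \ldots, i-1)}} \quad \text{for } j \geq i,
\]
with the convention $t_\emptyset \coloneqq 1$, and $\alpha_{i, j} \coloneqq 1$ otherwise. All ratios are well-defined and nonzero by ordinariness of $\underline{t}$, so $\underline{\alpha} \in I(n, m)$. To conclude $\Psi(\underline{\alpha}) = \underline{t}$, I would verify $\prod_{i=1}^k \alpha_{i, a_i} = t_{(a_1, \ldots, a_k)}$ for every column $(a_1, \ldots, a_k) \in \G{n}{m}$ by establishing the tableau identity
\[
    (a_1, \ldots, a_k) \star (1) \star (1, 2) \star \cdots \star (1, 2, \ldots, k-1) = (a_1) \star (1, a_2) \star (1, 2, a_3) \star \cdots \star (1, 2, \ldots, k-1, a_k)
\]
in $\A{n}{m}$, where tuples are interpreted as column tableaux. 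Both sides yield a tableau of staircase shape $(k, k-1, \ldots, 1)$, and the verification reduces to a row-by-row comparison of the weights $\wt^{(i)}$ using the characterization of $\star$ in the remark following Definition \ref{staroperation}. Applying $\pi_{\underline{t}}$ to this identity and clearing denominators produces the required multiplicative relation among the $t_C$, establishing $\Psi_0(\ev_{\underline{\alpha}}) = \pi_{\underline{t}}$. The final isomorphism $\mathcal{M}_{ord}(n, m) \cong \Ev_0(n, m)/\ker \Psi_0$ then follows from the first isomorphism theorem combined with Proposition \ref{prop:maxidealgroup}.
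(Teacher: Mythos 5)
Your proposal is correct and follows essentially the same route as the paper: well-definedness and the homomorphism property via componentwise multiplicativity on columns, the kernel by observing that only entries $\alpha_{i,j}$ with $i\leq j$ ever occur in $\Psi({\underline{\alpha}})$, surjectivity by constructing a preimage out of ratios $t_{(1,\dots,i-1,j)}/t_{(1,\dots,i-1)}$ (legitimate by ordinariness), and the final claim from the first isomorphism theorem together with Proposition~\ref{prop:maxidealgroup}. The only cosmetic difference is that you write this preimage in closed form and verify it with a single staircase identity of row weights, whereas the paper builds the same tuple inductively on column height and checks consistency via a product relation; the two constructions coincide.
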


\begin{proof}
    Clearly $\Psi_0$ is well defined, because $\ev_{\underline{\alpha}} \in \Ev_0(n,m)$ 
    implies ${\underline{\alpha}} \in I(n,m)$, so $\Psi({\underline{\alpha}}) \in J(n,m)$ and hence 
    $\pi_{\Psi({\underline{\alpha}})} \in \Quo_0(n,m)$. 
    Given ${\underline{\alpha}}, {\underline{\beta}} \in \fk^{nm}$ and $T \in \G{n}{m}$, then 
    $\widetilde{\ev}_{{\underline{\alpha\beta}} }(T) 
    = \widetilde{\ev}_{{\underline{\alpha}}}(T)\widetilde{\ev}_{{\underline{\beta}}}(T)$ 
    by Remark~\ref{rem:homomorphism}. Thus, 
    $\Psi({\underline{\alpha\beta}}) 
    = \Psi({\underline{\alpha}})\Psi({\underline{\beta}})$ and 
    \[\Psi_0(\ev_{{\underline{\alpha}}} \circledast \ev_{{\underline{\beta}}}) 
    = \Psi_0(\ev_{\underline{\alpha\beta}}) 
    = \pi_{\Psi({\underline{\alpha\beta}})} 
    = \pi_{\Psi({\underline{\alpha}})} \circledast \pi_{\Psi({\underline{\beta}})} 
    = \Psi_0(\ev_{{\underline{\alpha}}}) \circledast \Psi_0(\ev_{{\underline{\beta}}}),\] 
    so $\Psi_0$ is a group homomorphism. 
    Note that $\ev_{\underline{\alpha}} \in \ker \Psi_0$ if and only if 
    $\widetilde{\ev}_{\underline{\alpha}}(T) = 1$ for all $T \in \G{n}{m}$, which holds 
    if and only if $\alpha_{i,j}=1$ for all $ i\leq j$, so 
    $\ker\Psi_0$ is as claimed. Assuming $\Psi_0$ is surjective, 
    the final claim follows by the first 
    isomorphism theorem and Proposition~\ref{prop:maxidealgroup}. 

    Lastly, we show $\Psi_0$ is surjective. 
    Given $\underline{t} = (t_1,\dots,t_{|\G{n}{m}|}) 
    \in \mathcal{V}(\R{n}{m}) \times \fk^{|\F{n}{m}|}$ 
    an ordinary point, it suffices to find 
    ${\underline{\beta}} \in \Psi^{-1}(\underline{t})$. Recall that for 
    any column $T \in \G{n}{m}$ of height $h$ 
    we can associate a monomial $\Omega(T)=X_{\underline{a}} 
    = x_{a_1}^{(1)}\dots x_{a_h}^{(h)}$ such that $a_1 < \dots < a_h$ and 
    $i \leq a_i$ for all $1 \leq i \leq h$, 
    as in the proof of Theorem~\ref{thm:Polynomial-Iso}. 
    We will construct ${\underline{\beta}}$ 
    by inducting on the 
    height $h$ of the columns in $\G{n}{m}$ as follows. 
    
    Suppose $T_k \in \G{n}{m}$ is a column of height $h=1$, 
    so $\Omega(T_k) = x_{a_1}^{(1)}$ for some $1\leq a_1\leq m$, 
    and set $\beta_{1,a_1} \coloneqq t_k$. 
    These assignments determine $\beta_{1,j}$ for all $1\leq j \leq m$. 
    Suppose $\beta_{i,j}$ has been determined for all 
    $1\leq i <h$ and $i\leq j$ and that $T_k$ is a column of height 
    $h>1$, so $\Omega(T_k)=X_{\underline{a}}=x_{a_1}^{(1)}\dots x_{a_h}^{(h)}$, 
    and set 
    \[
        \beta_{h,a_h} \coloneqq \frac{t_k}{\prod_{1\leq i<h} \beta_{i,a_i}}. 
    \]
    Note $\prod_{1\leq i<h} \beta_{i,a_i}\neq 0$ because
    $\underline{t}$ is ordinary. 
    Moreover, if $T_k$ and $T_\ell$ are two tableaux of height $h$ 
    with the same last entry, 
    say $\Omega(T_k)=X_{\underline{a}}$ and $\Omega(T_\ell)=X_{\underline{b}}$, 
    then
    \[
        t_k \left( \prod_{1\leq i<h} \beta_{i,b_i}\right) 
        = \pi_{\underline{t}}(T_k \star T_\ell') 
        = \pi_{\underline{t}}(T'_k \star T_\ell) 
        = \left( \prod_{1\leq i<h} \beta_{i,a_i}\right) t_\ell 
    \]
    where $T_k'$ and $T'_\ell$ are obtained by deleting the last entry of 
    $T_k$ and $T_\ell$, respectively. 
    This procedure is thus well defined, and 
    $\widetilde{\ev}_{\underline{\beta}}(T_k) = \beta_{1,a_1} \cdots \beta_{h,a_h} = t_k$, 
    so $\Psi({\underline{\beta}}) = \underline{t}$ as desired. 
\end{proof}

In particular, observe that as a consequence of the above proof, when 
$\underline{t}$ is ordinary the preimage $\Psi^{-1}(\underline{t})$ 
has dimension $\binom{n}{2}$. 

As noted above, the nonordinary points of 
$\mathcal{V}(\R{n}{m}) \times \fk^{|\F{n}{m}|}$ 
coincide with the noninvertible elements in $\Quo(n,m)$. Therefore, in order to 
study these points, we use that $\Ev(n,m)$ and $\Quo(n,m)$ form commutative 
monoids under pointwise multiplication. 

\begin{definition}
    Let $\Psi_{*}: \Ev(n,m) \to \Quo(n,m)$ to be the monoid morphism defined 
    by setting $\Psi_{*}(\ev_{\underline{\alpha}}) \coloneqq \pi_{\Psi({\underline{\alpha}})}$ 
    for any ${\underline{\alpha}} \in \fk^{nm}$. 
\end{definition}

The map is well defined by 
Lemma~\ref{lem:evaluation-alpha-omega-is-projection-omega-alpha} 
and it is a monoid morphism by the proof of Theorem~\ref{thm:EvalsMaxIdeal}. 
Its easy to see $\Psi_{*}\vert_{\Ev_0(n,m)} = \Psi_0$, 
hence $\Psi_{*}$ extends the group morphism $\Psi_0$ with source 
$\Ev_0(n,m)$ to a monoid morphism with source $\Ev(n,m)$. 

\begin{theorem}
    The monoid morphism $\Psi_{*}: \Ev(n,m) \to \Quo(n,m)$ 
    fits in the diagram 
    \[
        \begin{tikzcd}[row sep=0]
            1 \ar[r] 
            & \ker\Psi_{*} \ar[r, hook] 
            & \Ev(n,m) \ar[r, "\Psi_{*}"] 
            & \Quo(n,m) \ar[r, two heads] 
            & \Quo(n,m) / \operatorname{im}\Psi_{*} \ar[r] 
            & 1 
        \end{tikzcd}
    \]
    where the kernel and image of $\Psi_{*}$ are given as follows. 
    \begin{align*}
        \ker\Psi_{*}=&
         \{ \ev_{\underline{\alpha}} \mid \alpha_{i,j}=1 \text{ for all } i\leq j\}
        \\ 
        \operatorname{im}\Psi_{*}=&
        \Quo_0(n,m)
        \cup 
        \{
            \pi_{\underline{t}} \mid \text{if } t_i=0 \text{ and } 
            \Omega(T_i) \text{ divides } \Omega(T_k) \text{ then } t_k=0 
        \}
    \end{align*}
    In particular, the equivalence relation given by $\operatorname{im}\Psi_{*}$ 
    is a congruence, so the corresponding quotient 
    $\Quo(n,m) / \operatorname{im}\Psi_{*}$ is 
    a monoid in the natural way. 
\end{theorem}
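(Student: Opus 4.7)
The proof splits into three tasks: identifying $\ker\Psi_*$, characterizing $\operatorname{im}\Psi_*$, and showing the induced quotient is a monoid. Throughout I would exploit the explicit formula $\widetilde{\ev}_{\underline{\alpha}}(T) = \prod_{i,j}\alpha_{i,j}^{\wt_j^{(i)}(T)}$ together with the equality $\Psi_*(\ev_{\underline{\alpha}}) = \pi_{\Psi({\underline{\alpha}})}$ from Lemma \ref{lem:evaluation-alpha-omega-is-projection-omega-alpha}.

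For the kernel, $\ev_{\underline{\alpha}} \in \ker\Psi_*$ is equivalent to $\widetilde{\ev}_{\underline{\alpha}}(T) = 1$ for every column $T \in \G{n}{m}$. Since the entries of the $i$-th row of any semistandard tableau are at least $i$, the variable $x_j^{(i)}$ with $i > j$ appears in no $\Omega(T)$, so the entries $\alpha_{i,j}$ with $i > j$ are automatically unconstrained. For $i \leq j$, I would induct on the column height: at height $1$, the one-cell column with entry $j$ yields $\alpha_{1,j}=1$, and at height $i$ the standard column with entries $1,2,\ldots,i-1,j$ (which is semistandard since $i\leq j$) forces $\alpha_{i,j}=1$ given the inductive hypothesis.

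The forward inclusion for the image is immediate: if $\pi_{\underline{t}}=\Psi_*(\ev_{\underline{\alpha}})$ with $t_i=0$, then some $\alpha_{h,j}$ with $\wt_j^{(h)}(T_i)>0$ vanishes, and if $\Omega(T_i) \mid \Omega(T_k)$ then $\wt_j^{(h)}(T_k)>0$ too, forcing $t_k=0$. The reverse inclusion is the main technical step. Given $\pi_{\underline{t}} \in \Quo(n,m)$ satisfying the divisibility condition, I would construct ${\underline{\alpha}}$ by induction on column height, mirroring the surjectivity argument of Theorem \ref{thm:EvalsMaxIdeal}. At height $h$ with fixed last entry $a_h$, if some admissible prefix $(a_1,\ldots,a_{h-1})$ yields a nonzero product $\prod_{i<h}\alpha_{i,a_i}$, one defines $\alpha_{h,a_h}$ so that $\widetilde{\ev}_{\underline{\alpha}}$ attains the value $t$ of the full column; compatibility across different prefixes ending in $a_h$ reduces to a product relation of the form $T_1 \star T_2 = T_3 \star T_4$ in $\R{n}{m}$, which holds by $\underline{t} \in \mathcal{V}(\R{n}{m})$. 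Otherwise, every admissible height $h-1$ prefix column has zero $t$-value by induction; since the $\Omega$ of each such prefix divides the $\Omega$ of every height $h$ extension ending in $a_h$, the divisibility hypothesis forces every such height $h$ column to have $t=0$, so $\alpha_{h,a_h}$ may be set freely, say to $0$. The main obstacle is precisely this consistency check, which is where the product relations and the divisibility hypothesis interlock.

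For the final claim, $\operatorname{im}\Psi_*$ is a submonoid of the commutative monoid $\Quo(n,m)$. The relation $\pi_{\underline{s}} \sim \pi_{\underline{t}}$ iff $\pi_{\underline{s}} \circledast \pi_{\underline{u}} = \pi_{\underline{t}} \circledast \pi_{\underline{v}}$ for some $\pi_{\underline{u}},\pi_{\underline{v}} \in \operatorname{im}\Psi_*$ is a congruence: given another compatible pair $\pi_{\underline{s}'} \sim \pi_{\underline{t}'}$, multiplying the two defining equalities preserves the form by closure of the submonoid under $\circledast$. Consequently $\Quo(n,m)/\operatorname{im}\Psi_*$ inherits a well-defined monoid structure, completing the exact sequence.
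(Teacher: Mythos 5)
Your proof is correct, and on the kernel and the congruence claim it runs along the same lines as the paper: the paper obtains the kernel by citing the computation inside Theorem~\ref{thm:EvalsMaxIdeal} together with the observation that the entries $\alpha_{i,j}$ with $j<i$ never occur in any $\Omega(T)$ (your induction using the columns $1,2,\dots,i-1,j$ simply makes that citation self-contained), and it checks multiplicativity of the relation essentially as you do, your symmetrized version of the relation being a harmless variant. Where you genuinely depart from the paper is the image. The paper's proof records only two containments: $\Quo_0(n,m)\subseteq\operatorname{im}\Psi_{*}$, from the surjectivity of $\Psi_0$, and $\operatorname{im}\Psi_{*}\subseteq\Quo_0(n,m)\cup\{\pi_{\underline{t}}\mid \text{divisibility condition}\}$, by noting that if $\alpha_{i,j}=0$ then every column whose monomial is divisible by $x_j^{(i)}$ evaluates to zero; it does not construct a preimage of a non-ordinary $\pi_{\underline{t}}$ satisfying the divisibility condition. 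Your induction on column height supplies precisely that direction, and it works: if some height-$(h-1)$ prefix $P_0$ of a column $T_0$ with last entry $a_h$ has $t_{P_0}\neq0$, you set $\alpha_{h,a_h}=t_{T_0}/t_{P_0}$, and for any other column $T_1$ with prefix $P_1$ and the same last entry the identity $T_1\star P_0=P_1\star T_0$ in $\SSYT_m^n$ (a genuine product relation, the four columns being distinct and hence in $\E{n}{m}$ by Lemma~\ref{lem:prod-ideal}, so it is respected because $\underline{t}\in\mathcal{V}(\R{n}{m})\times\fk^{|\F{n}{m}|}$) forces $t_{T_1}t_{P_0}=t_{P_1}t_{T_0}$, which settles consistency whether $t_{P_1}$ is zero or not; the divisibility hypothesis is then only needed in the case where every admissible prefix has $t$-value zero, exactly as you indicate. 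So your route yields a complete verification of the stated set equality for $\operatorname{im}\Psi_{*}$, at the cost of a longer argument than the paper's, which leaves that reverse containment implicit.
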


\begin{proof}
    We know $\{ \ev_{\underline{\alpha}} \mid \alpha_{i,j} = 
    1 \text{ for all }  i\leq j\} \subseteq \ker\Psi_{*}$ 
    by the proof of Theorem \ref{thm:EvalsMaxIdeal}. 
    Since the entries $\alpha_{i,j}$  with $j<i$ do not play a role 
    in the definition of $\Psi({\underline{\alpha}})$, the first equality follows. 

    Suppose ${\underline{\alpha}} \in I(n,m)$, since $\Psi_0$ is surjective, 
    we have $\operatorname{im}\Psi_0 = \Quo_0(n,m) \subseteq 
    \operatorname{im}\Psi_{*}$. 
    Suppose ${\underline{\alpha}} \notin I(n,m)$, so that 
    $\alpha_{i,j}=0$ for some $i$ and $j$. Then for any $T_k \in \G{n}{m}$ 
    for which $x^{(i)}_j$ divides $\Omega(T_k)$, we must have $t_k=0$. 
    Thus $\pi_{\Psi({\underline{\alpha}})}$ satisfies that 
    given pair of columns $T_\ell,T_k \in \G{n}{m}$ with 
    $\Omega(T_\ell)$ dividing $\Omega(T_k)$, 
    if $t_\ell = 0$ then $t_k=0$, 
    as desired for the second equality. 

    Let $\pi_{\underline{r}}, \pi_{\underline{s}}, \pi_{\underline{u}}, 
    \pi_{\underline{v}}\in \Quo(n,m)$ such that $\pi_{\underline{r}} 
    \equiv \pi_{\underline{s}}$ 
    and $\pi_{\underline{u}} \equiv \pi_{\underline{v}}$ in 
    $\Quo(n,m) / \operatorname{im}\Psi_{*}$. 
    That is, there exist $\pi_{\underline{t}}, 
    \pi_{\underline{w}} \in \operatorname{im}\Psi_{*}$ such that 
    $\pi_{\underline{s}} = \pi_{\underline{r}} \circledast \pi_{\underline{t}}$ 
    and 
    $\pi_{\underline{v}} = \pi_{\underline{u}} \circledast \pi_{\underline{w}}$ 
    in $\Quo(n,m)$. 
    We have that $\Quo(n,m)$ and $\operatorname{im}\Psi_{*}$ are commutative 
    monoids as a consequence of 
    Remark~\ref{rem:homomorphism}, so 
    $\pi_{\underline{s}} \circledast \pi_{\underline{v}} 
    = \pi_{\underline{r}} \circledast \pi_{\underline{t}} 
    \circledast \pi_{\underline{u}} \circledast \pi_{\underline{w}} 
    = \pi_{\underline{r}} \circledast \pi_{\underline{u}} 
    \circledast \pi_{\underline{t}} \circledast \pi_{\underline{w}}$ 
    in $\Quo(n,m)$ with $\pi_{\underline{t}} \circledast \pi_{\underline{w}} 
    \in \operatorname{im}\Psi_{*}$. 
    Thus $\pi_{\underline{s}} \circledast \pi_{\underline{v}} 
    \equiv \pi_{\underline{r}} \circledast \pi_{\underline{u}}$ in 
    $\Quo(n,m) / \operatorname{im}\Psi_{*}$ and 
    the remaining claim follows. 
\end{proof}

\begin{remark}
Note that in the above diagram, the image of a map coincides with the kernel 
of the following one. While some authors may call this 
an ``exact sequence of monoids'', 
we will avoid such a slippery notion~\cite[Remark 2.6]{balmer2005quadratic} 
and we will not enter into the technical details of such a statement here. \end{remark}

\begin{example}
    Consider again $\A{2}{3}$ and $\alpha_{i, a}=a^i$ 
    as in Example~\ref{exam:alpha-psi-2-T-3}. 
    We have the single product relation~\eqref{eq:first-product-relation}, 
    which we 
    now use to showcase how $\Psi_{*} : \Ev(2,3) \to \Quo(2,3)$ is not surjective. 
    Let $\underline{t} = (0,0,9,18,0,4)$, 
    giving $\pi_{\underline{t}}\in \Quo(2,3)$ 
    whose value on $\G{2}{3}$ follows. 
    \[
        \hackcenter{\tableau{1}} \mapsto 0 \qquad 
        \hackcenter{\tableau{2}} \mapsto 0  \qquad 
        \tableau{1\\3} \mapsto 9 \qquad 
        \tableau{2\\3} \mapsto 18 \qquad 
        \hackcenter{\tableau{3}} \mapsto 0 \qquad 
        \tableau{1\\2} \mapsto 4
    \]
    This indexes a maximal ideal 
    $M_{\underline{t}} \in \maxSpec(\A{2}{3})$ because the 
    following product relation holds. 
    \[
        \pi_{\underline{t}} \left( \; \hackcenter{\tableau{1\\3}} \; \right) 
        * \pi_{\underline{t}} \left( \;\hackcenter{\tableau{2}}\;\right) 
        = 9 \cdot 0 = 18 \cdot 0 
        = \pi_{\underline{t}} \left( \;\hackcenter{\tableau{2\\3}}\;\right) 
        * \pi_{\underline{t}} \left( \;\hackcenter{\tableau{1}}\;\right) 
    \]
    Although $\pi_{\underline{t}}$ coincides with 
    $\ev_{\Psi({\underline{\alpha}})}$ in the long columns, they 
    differ in the short columns. 
    Suppose there is ${\underline{\beta}} \in \fk^{6}$ such that 
    $\Psi_{*}({\underline{\beta}}) = \pi_{\underline{t}}$. Then 
    \[
        9 = \pi_{\underline{t}} \left(\; \hackcenter{\tableau{1\\3}} \; \right) 
        = \beta_{1,1} \cdot \beta_{2,3} = \pi_{\underline{t}} 
        \left( \;\hackcenter{\tableau{1}}\;\right) \cdot \beta_{2,3} 
        = 0 \cdot \beta_{2,3} = 0, 
    \]
    a contradiction. 
    Thus, $\pi_{\underline{t}} \notin \operatorname{im}\Psi_{*}$. 
\end{example}

\subsection{The Zariski topology of the maximal spectrum}\label{sec:topology-max-spec}

In this section we build on the previous one to describe the topology of 
$\maxSpec(\A{n}{m})$ in terms of the topology of $\maxSpec(\fk[X])$. 

Recall that a basis of opens for the topology of $\maxSpec(\A{n}{m})$ is given by 
$\Delta(\maxSpec(\A{n}{m})) \coloneqq \{ D(f) \mid f \in \A{n}{m}\}$ where 
$D(f) \coloneqq \{M \in \maxSpec(\A{n}{m}) \mid f \notin M\}$ for $f \in \A{n}{m}$. 
A similarly defined $\Delta(\maxSpec(\fk[X]))$ yields a basis of opens for 
$\maxSpec(\fk[X])$. 
Moreover, by Theorem~\ref{thm:Maxideals} points in $\maxSpec(\A{n}{m})$ can be indexed by $M_{\underline{t}}$ 
with $\underline{t} \in \mathcal{V}(\R{n}{m}) \times \fk^{|\F{n}{m}|}$, 
which in turn identifies $D(f)$ with the set $\{\underline{t} \in \mathcal{V}(\R{n}{m}) 
\times \fk^{|\F{n}{m}|} \mid f \notin M_{\underline{t}}\}$. 
Similarly, 
points in $\maxSpec(\fk[X])$ can be indexed by $N_{{\underline{\alpha}}}$ 
with ${\underline{\alpha}} \in \fk^{nm}$, and its basic opens enjoy an analogous identification. 

\begin{lemma}\label{lemm:open-basis-max-spec}
    The assignment 
    \begin{equation*}
        \label{eq:basis-max-spec}
        \begin{tikzcd}[row sep=0]
            \Xi \colon \Delta(\maxSpec(\A{n}{m})) \ar[r] 
            & \Delta(\maxSpec(\fk[X]))\\
            \phantom{\Xi \colon} D(f) \ar[r, mapsto] 
            & \{ N_{{\underline{\alpha}}} \in \maxSpec(\fk[X]) \mid 
            \exists M_{\underline{t}} \in D(f) \text{ with } \Psi({\underline{\alpha}}) = \underline{t} \}
        \end{tikzcd}
    \end{equation*}
    is well defined. Moreover $\Xi(D(f)) = D(\Omega(f))$. 
\end{lemma}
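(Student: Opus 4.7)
The plan is to prove the identity $\Xi(D(f)) = D(\Omega(f))$ directly, and then observe that well-definedness of $\Xi$ is an immediate corollary, since the assignment rule depends only on the set $D(f)$ (not on the choice of representative $f$), so the only substantive point is that the image actually lies in $\Delta(\maxSpec(\fk[X]))$.

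First, fix $f \in \A{n}{m}$ and unwind the definition of $\Xi(D(f))$. An element $N_{\underline{\alpha}} \in \maxSpec(\fk[X])$ belongs to $\Xi(D(f))$ if and only if there exists $\underline{t} \in \mathcal{V}(\R{n}{m}) \times \fk^{|\F{n}{m}|}$ with $\Psi(\underline{\alpha}) = \underline{t}$ and $f \notin M_{\underline{t}}$. The key observation is that by Lemma~\ref{lem:evaluation-alpha-omega-is-projection-omega-alpha}, $\Psi(\underline{\alpha})$ automatically lies in $\mathcal{V}(\R{n}{m}) \times \fk^{|\F{n}{m}|}$ for every ${\underline{\alpha}} \in \fk^{nm}$, so $M_{\Psi(\underline{\alpha})}$ is always a well-defined maximal ideal of $\A{n}{m}$. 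Consequently the existential quantifier collapses: the condition becomes simply $f \notin M_{\Psi(\underline{\alpha})}$, equivalently $\pi_{\Psi(\underline{\alpha})}(f) \neq 0$.

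Next, I would apply the identity $\pi_{\Psi(\underline{\alpha})}(f) = \ev_{\underline{\alpha}}(\Omega(f))$ from Lemma~\ref{lem:evaluation-alpha-omega-is-projection-omega-alpha} to translate the condition into $\ev_{\underline{\alpha}}(\Omega(f)) \neq 0$, which is precisely $\Omega(f) \notin N_{\underline{\alpha}}$, i.e., $N_{\underline{\alpha}} \in D(\Omega(f))$. Chaining these equivalences yields $\Xi(D(f)) = D(\Omega(f))$ as sets in $\maxSpec(\fk[X])$.

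This identity then takes care of well-definedness in two ways: it exhibits $\Xi(D(f))$ as a basic open, so the image lies in $\Delta(\maxSpec(\fk[X]))$; and since the defining rule of $\Xi$ only references the set $D(f)$, if $D(f) = D(g)$ then $\Xi(D(f)) = \Xi(D(g))$ holds tautologically. I do not anticipate any real obstacle, as the proof is essentially a single chain of equivalences resting on the previously established compatibility $\ev_{\underline{\alpha}} \circ \Omega = \pi_{\Psi(\underline{\alpha})}$; the only delicate point is ensuring the existential quantifier collapses cleanly, which is secured precisely by the fact that $\Psi$ always lands in the correct parameter set.
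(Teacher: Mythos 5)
Your proof is correct and follows essentially the same route as the paper: a single chain of equivalences resting on Lemma~\ref{lem:evaluation-alpha-omega-is-projection-omega-alpha} ($\ev_{\underline{\alpha}} \circ \Omega = \pi_{\Psi({\underline{\alpha}})}$ and $\Psi({\underline{\alpha}}) \in \mathcal{V}(\R{n}{m}) \times \fk^{|\F{n}{m}|}$), identifying $\Xi(D(f))$ with $D(\Omega(f))$ and deducing well-definedness from that identity. Your explicit remark that the existential quantifier collapses because $\Psi$ always lands in the correct parameter set is exactly the implicit step in the paper's computation.
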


\begin{proof}
    It suffices to prove the second claim. Let $f \in \A{n}{m}$, note 
    \begin{align*}
        D(\Omega(f)) 
        &= \{N_{{\underline{\alpha}}} \in \maxSpec(\fk[X]) \mid \Omega(f) \notin N_{{\underline{\alpha}}}\}\\ 
        &= \{N_{{\underline{\alpha}}} \in \maxSpec(\fk[X]) \mid \ev_{{\underline{\alpha}}}(\Omega(f)) \neq 0\}\\ 
        &= \{N_{{\underline{\alpha}}} \in \maxSpec(\fk[X]) \mid \pi_{\Psi({\underline{\alpha}})}(f) \neq 0\}\\ 
        &= \{N_{{\underline{\alpha}}} \in \maxSpec(\fk[X]) \mid 
        \exists M_{\Psi({\underline{\alpha}})} \in \maxSpec(\A{n}{m}) \text{ with } f \notin M_{\Psi({\underline{\alpha}})}\}\\ 
        &= \{N_{{\underline{\alpha}}} \in \maxSpec(\fk[X]) \mid 
        \exists M_{\underline{t}} \in \maxSpec(\A{n}{m}) \text{ with } 
        f \notin M_{\underline{t}} \text{ and } \Psi({\underline{\alpha}}) = \underline{t}\}\\ 
        &= \{N_{{\underline{\alpha}}} \in \maxSpec(\fk[X]) \mid 
        \exists M_{\underline{t}} \in D(f) \text{ with } \Psi({\underline{\alpha}}) = \underline{t}\}\\ 
        &= \Xi(D(f)). 
    \end{align*}
    Hence $\Xi(D(f)) = D(\Omega(f))$ and $\Xi$ is well defined. 
\end{proof}

As corollary, we obtain the desired description of the topology. 

\begin{theorem}\label{thm:opens-max-spec}
    The assignment 
    \begin{equation*}
        \label{eq:topology-max-spec}
        \begin{tikzcd}[row sep=0]
            \Theta \colon \mathrm{Open}(\maxSpec(\A{n}{m})) \ar[r] 
            & \mathrm{Open}(\maxSpec(\fk[X]))\\
            \phantom{\Theta \colon} U \ar[r, mapsto] 
            & \{ N_{{\underline{\alpha}}} \in \maxSpec(\fk[X]) \mid 
            \exists M_{\underline{t}} \in U \text{ with } \Psi({\underline{\alpha}}) = \underline{t} \}
        \end{tikzcd}
    \end{equation*}
    is well defined. Moreover $\Theta(D(f)) = \Xi(D(f))$, 
    namely $\Theta$ restricts to $\Xi$. 
\end{theorem}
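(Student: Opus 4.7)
The plan is to deduce both claims from the preceding Lemma~\ref{lemm:open-basis-max-spec} via a distributivity argument over unions. The key observation is that the defining formula for $\Theta(U)$ is an existential condition on the index $\underline{t}$, so the map $\Theta$ automatically commutes with arbitrary unions of subsets (not just open ones). Concretely, for any family $\{U_i\}_{i \in I}$ of subsets of $\maxSpec(\A{n}{m})$, unfolding the defining formula gives
\[
\Theta\!\left(\bigcup_{i\in I} U_i\right)
= \bigcup_{i \in I} \Theta(U_i),
\]
because ``$\exists M_{\underline{t}} \in \bigcup_i U_i$'' is interchangeable with ``$\exists i$ and $\exists M_{\underline{t}} \in U_i$''.

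For the second claim, I would observe that when $U = D(f)$ is a basic open, the defining formulas of $\Theta(D(f))$ and $\Xi(D(f))$ are literally identical word for word. Hence $\Theta(D(f)) = \Xi(D(f))$, and by Lemma~\ref{lemm:open-basis-max-spec} this set coincides with $D(\Omega(f))$, which is a basic open of $\maxSpec(\fk[X])$. This immediately proves the second assertion of the theorem.

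For the first claim, I would then reduce well-definedness to the basic open case. Given an arbitrary open $U \subseteq \maxSpec(\A{n}{m})$, write $U = \bigcup_{j} D(f_j)$ for some family $\{f_j\} \subseteq \A{n}{m}$. By the distributivity of $\Theta$ over unions and the second claim,
\[
\Theta(U)
= \bigcup_{j} \Theta(D(f_j))
= \bigcup_{j} D(\Omega(f_j)),
\]
which is a union of basic opens in $\maxSpec(\fk[X])$, hence open. Note that the right-hand side depends only on $U$ and not on the chosen basic-open decomposition, since $\Theta$ is defined purely in terms of the underlying set $U$.

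There is no substantial obstacle here, as the main work has already been carried out in Lemma~\ref{lemm:open-basis-max-spec}. The only mild subtlety worth flagging is that one should verify the distributivity of $\Theta$ over unions holds at the level of sets indexed by $\maxSpec(\fk[X])$ (rather than requiring the $U_i$ to be open); this is immediate from the existential form of the definition and is what makes the reduction to basic opens go through.
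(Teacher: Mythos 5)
Your proposal is correct and follows essentially the same route as the paper: observe that $\Theta(D(f)) = \Xi(D(f))$ directly from the definitions, then write an arbitrary open as a union of basic opens and use that the existential defining condition distributes over unions, so $\Theta(U) = \bigcup_j D(\Omega(f_j))$ is open by Lemma~\ref{lemm:open-basis-max-spec}. Your extra remark that the distributivity holds for arbitrary subsets and that the result is independent of the chosen decomposition is a fine (if implicit in the paper) clarification, but the argument is the same.
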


\begin{proof}
    Indeed $\Theta(D(f)) = \Xi(D(f))$ by definition. Let $U$ be an open of 
    $\maxSpec(\A{n}{m})$, say $U = \bigcup_{i \in I}D(f_{i})$ for a set 
    $\{f_{i} \in \A{n}{m}\}_{i \in I}$. Then 
    \begin{align*}
        \Theta(U) 
        &= \Theta\left(\bigcup_{i \in I}D(f_{i})\right)\\
        &= \{N_{{\underline{\alpha}}} \in \maxSpec(\fk[X]) \mid 
        \exists M_{\underline{t}} \in \bigcup_{i \in I}D(f_{i}) \text{ with } 
        \Psi({\underline{\alpha}}) = \underline{t}\}\\ 
        &= \{N_{{\underline{\alpha}}} \in \maxSpec(\fk[X]) \mid 
        \exists i \in I \text{ with } M_{\underline{t}} \in D(f_{i}) \text{ and } 
        \Psi({\underline{\alpha}}) = \underline{t}\}\\ 
        &= \bigcup_{i \in I}\{N_{{\underline{\alpha}}} \in \maxSpec(\fk[X]) \mid
        \exists M_{\underline{t}} \in D(f_{i}) \text{ with } 
        \Psi({\underline{\alpha}}) = \underline{t}\}\\ 
        &= \bigcup_{i \in I}\Theta(D(f_{i})) = \bigcup_{i \in I}\Xi(D(f_{i})) 
        = \bigcup_{i \in I}D(\Omega(f_{i})). 
    \end{align*}
    Hence, $\Theta(U)$ is an open of $\maxSpec(\fk[X])$. 
\end{proof}

\begin{corollary}\label{cor:continuous-map-max-spec}
    The map $\Psi^*$ given below is continuous,
    \begin{equation*} 
        \begin{tikzcd}[row sep=0]
            \Psi^* \colon \maxSpec(\fk[X]) \ar[r] 
            & \maxSpec(\A{n}{m})\\
            \phantom{\Psi^* \colon} N_{{\underline{\alpha}}} \ar[r, mapsto] 
            & M_{\Psi({\underline{\alpha}})}. 
        \end{tikzcd}
    \end{equation*} 
\end{corollary}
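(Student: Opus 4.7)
The plan is to show that preimages of a basis of open sets in $\maxSpec(\A{n}{m})$ are open in $\maxSpec(\fk[X])$, since continuity on a basis implies continuity globally. The natural basis to work with is $\Delta(\maxSpec(\A{n}{m})) = \{D(f) \mid f \in \A{n}{m}\}$, as set up in the preamble to Lemma~\ref{lemm:open-basis-max-spec}. So my goal reduces to identifying $(\Psi^*)^{-1}(D(f))$ explicitly for each $f \in \A{n}{m}$.

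First, I would briefly verify that $\Psi^*$ is well defined: given ${\underline{\alpha}} \in \fk^{nm}$, Lemma~\ref{lem:evaluation-alpha-omega-is-projection-omega-alpha} gives $\Psi({\underline{\alpha}}) \in \mathcal{V}(\R{n}{m}) \times \fk^{|\F{n}{m}|}$, so $M_{\Psi({\underline{\alpha}})}$ is indeed a maximal ideal of $\A{n}{m}$ by Theorem~\ref{thm:Maxideals}.

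Next, I would compute the preimage. By definition,
\[
    (\Psi^*)^{-1}(D(f)) = \{ N_{{\underline{\alpha}}} \in \maxSpec(\fk[X]) \mid f \notin M_{\Psi({\underline{\alpha}})}\}.
\]
The condition $f \notin M_{\Psi({\underline{\alpha}})}$ is equivalent to $\pi_{\Psi({\underline{\alpha}})}(f) \neq 0$, and Lemma~\ref{lem:evaluation-alpha-omega-is-projection-omega-alpha} identifies this with $\ev_{{\underline{\alpha}}}(\Omega(f)) \neq 0$, i.e., $\Omega(f) \notin N_{{\underline{\alpha}}}$. Thus $(\Psi^*)^{-1}(D(f)) = D(\Omega(f))$, which matches the computation $\Theta(D(f)) = \Xi(D(f)) = D(\Omega(f))$ from Theorem~\ref{thm:opens-max-spec} and Lemma~\ref{lemm:open-basis-max-spec}, and is manifestly open in $\maxSpec(\fk[X])$.

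Since the basic opens pull back to basic opens, continuity follows. There is no real obstacle here; the main content has already been absorbed into Lemma~\ref{lem:evaluation-alpha-omega-is-projection-omega-alpha} and Theorem~\ref{thm:opens-max-spec}, and this corollary is essentially a restatement in terms of the point map dual to the inclusion $\Theta$ of topologies.
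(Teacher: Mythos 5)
Your proposal is correct and follows essentially the same route as the paper: the paper deduces the corollary from Theorem~\ref{thm:opens-max-spec} by observing that $\Theta(U)$ is exactly $(\Psi^*)^{-1}(U)$, while you check the same thing on the basic opens via $(\Psi^*)^{-1}(D(f)) = D(\Omega(f))$, which is precisely the computation already carried out in Lemma~\ref{lemm:open-basis-max-spec}. The only (harmless) difference is that you restrict to a basis rather than arbitrary opens, together with the explicit well-definedness check via Lemma~\ref{lem:evaluation-alpha-omega-is-projection-omega-alpha} and Theorem~\ref{thm:Maxideals}.
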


\begin{proof}
    Follows directly from Theorem~\ref{thm:opens-max-spec}. 
\end{proof}

\subsection{Prime ideals}\label{sec:prime-ideals-tableau-algebra} 

In this section we study a small 
class of interesting ideals coming from the representation theory of $\mathfrak{sl}_m$, 
and discuss which of these are prime. 
Finding a complete classification of 
the prime ideals of $\A{n}{m}$ is beyond the scope of this paper.

We begin by first studying the prime principal ideals of the form 
$\langle T - a \rangle_{\A{n}{m}}$ for any monomial $T \in \SSYT_m(\lambda)$ 
and $a \in \fk$. This turns out to be intimately related to 
the decomposition of $\A{n}{m}$ given in Corollary \ref{cor:tensorprod}. 

\begin{lemma}\label{lem.multi_col}
    Suppose $\lambda=(\lambda_1,\dots,\lambda_n)$ with $\lambda_1 \geq 2$ 
    and $T \in \SSYT_m(\lambda)$. 
    Then, the ideal $\langle T \rangle_{\A{n}{m}}$ is not prime. 
\end{lemma}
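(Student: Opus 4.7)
The plan is to produce explicit witnesses to non-primality directly from the hypothesis $\lambda_1 \geq 2$. Since $T$ has at least two columns, I will write $T$ as the star product of its columns $T = C_1 \star C_2 \star \cdots \star C_{\lambda_1}$ and group this factorization as $T = T_1 \star T_2$, where $T_1 \coloneqq C_1$ and $T_2 \coloneqq C_2 \star \cdots \star C_{\lambda_1}$. Both $T_1$ and $T_2$ are nonempty elements of $\SSYT_m^n$, with shapes strictly smaller than $\lambda$; in particular each has strictly fewer than $|\lambda|$ cells.

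The remaining task is to verify that neither $T_1$ nor $T_2$ belongs to $\langle T \rangle_{\A{n}{m}}$. Since $\A{n}{m}$ is commutative, every element of $\langle T \rangle_{\A{n}{m}}$ has the form $T \cdot f$ for some $f = \sum_S c_S\, S \in \A{n}{m}$, which expands as $\sum_S c_S\, (T \star S)$ with $S \in \SSYT_m^n$. Because the star product is additive on shapes, each monomial $T \star S$ appearing here has shape with at least $|\lambda|$ cells, and by cancellativity (Proposition~\ref{prop:monoid}) distinct choices of $S$ produce distinct basis vectors $T \star S$ in $\A{n}{m}$. Since $\SSYT_m^n$ is a $\fk$-basis of $\A{n}{m}$, any \emph{single} tableau lying in $\langle T \rangle_{\A{n}{m}}$ must therefore itself have shape of weight at least $|\lambda|$. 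This immediately excludes both $T_1$ and $T_2$ from the ideal.

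Combining both parts gives $T_1 \star T_2 = T \in \langle T \rangle_{\A{n}{m}}$ while $T_1, T_2 \notin \langle T \rangle_{\A{n}{m}}$, which witnesses that $\langle T \rangle_{\A{n}{m}}$ is not prime. I do not anticipate any serious obstacle: the only point requiring care is the basis argument in the second step, namely that any element of $\langle T \rangle_{\A{n}{m}}$ expands as a linear combination of basis tableaux each containing at least $|\lambda|$ cells. This relies solely on the fact that $\SSYT_m^n$ forms a $\fk$-basis of the monoid algebra and that the star product is additive on shapes, both of which are established in the preliminaries.
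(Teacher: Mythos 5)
Your proposal is correct and follows essentially the same route as the paper: factor $T$ as its first column star the remaining columns, then observe that every tableau lying in $\langle T \rangle_{\A{n}{m}}$ must be of the form $T \star S$ and hence is too large to equal either factor. The only difference is cosmetic — you measure size by cell count and spell out the basis/cancellativity justification, where the paper phrases the same constraint via dominance of shapes — so your write-up is, if anything, slightly more explicit than the paper's.
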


\begin{proof}
    Since $\lambda_1 \geq 2$, then any $T \in \SSYT_m(\lambda)$ has at least two columns and can be 
    decomposed as $T = C\star T'$ for some $C \in \G{n}{m}$ and $T' \in \SSYT_m^n$ 
    satisfying $\operatorname{shape}(C) + \operatorname{shape}(T') = \lambda$. 
    In particular, any tableau $S \in \langle T\rangle$ must have 
    $\operatorname{shape}(S) \geq \lambda$ in dominance order, so that
    neither $C$ not $T'$ can be in $\langle T\rangle$, finishing the proof. 
\end{proof}

\begin{proposition}
    Given any nonempty $T \in \SSYT_m^n$, the principal ideal 
    $\langle T \rangle_{\A{n}{m}}$ is prime in $\A{n}{m}$ 
    if and only if $T \in \F{n}{m}$. 
\end{proposition}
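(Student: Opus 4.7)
The plan is to prove both implications, using the tensor product decomposition from Corollary~\ref{cor:tensorprod}, the classification of single-column tableaux satisfying nontrivial product relations from Lemma~\ref{lem:prod-ideal}, and the shape obstruction of Lemma~\ref{lem.multi_col}.

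For the forward implication, assume $\langle T \rangle_{\A{n}{m}}$ is prime. Lemma~\ref{lem.multi_col} already shows that any $T$ of shape $\lambda$ with $\lambda_1 \geq 2$ generates a non-prime principal ideal, so $T$ must be a single column in $\G{n}{m}$. Suppose for contradiction that $T \in \E{n}{m}$. Then Lemma~\ref{lem:prod-ideal} provides pairwise distinct $T', T'', T''' \in \G{n}{m}$ with $T \star T' = T'' \star T'''$ in $\A{n}{m}$, so $T'' \star T''' \in \langle T \rangle_{\A{n}{m}}$. To contradict primality, I would show that neither $T''$ nor $T'''$ lies in $\langle T \rangle_{\A{n}{m}}$. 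Since $\A{n}{m}$ is the monoid algebra of the $\fk$-linearly independent, cancellative monoid $\SSYT_m^n$ (Proposition~\ref{prop:monoid}), a single monomial $S \in \SSYT_m^n$ belongs to $\langle T \rangle_{\A{n}{m}}$ if and only if $S = T \star U$ for some $U \in \SSYT_m^n$. When $S$ is itself a single column, matching the shapes $(1^h) = (1^k) + \mathrm{shape}(U)$ forces $U = \emptyset$ and $S = T$; since $T, T'', T'''$ are pairwise distinct, this rules out $T'', T''' \in \langle T \rangle_{\A{n}{m}}$ and yields the desired contradiction.

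For the reverse implication, suppose $T \in \F{n}{m}$. By Corollary~\ref{cor:tensorprod} there is an isomorphism
\[\A{n}{m} \cong \fE{n}{m} \otimes \fF{n}{m},\]
under which $T$ corresponds to $1 \otimes T$, with $T$ now viewed as one of the polynomial generators of $\fF{n}{m} = \fk[\F{n}{m}]$. The principal ideal $\langle 1 \otimes T \rangle$ in this tensor product equals $\fE{n}{m} \otimes \langle T \rangle_{\fF{n}{m}}$, so the corresponding quotient is $\fE{n}{m} \otimes (\fF{n}{m}/\langle T \rangle_{\fF{n}{m}})$. The quotient $\fF{n}{m}/\langle T \rangle_{\fF{n}{m}}$ is again a polynomial ring in the remaining generators from $\F{n}{m}$, hence an integral domain, and $\fE{n}{m}$ is an integral domain by Corollary~\ref{cor:tensorprod}. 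Because $\fk$ is algebraically closed and both factors are finitely generated domains over $\fk$, their tensor product is again a domain (equivalently, the product of the two associated irreducible affine varieties is irreducible). Hence $\langle T \rangle_{\A{n}{m}}$ is prime.

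The only step requiring real care is the equivalence between single-monomial membership in $\langle T \rangle_{\A{n}{m}}$ and monoid divisibility used in the forward direction; once this is in place, the argument reduces to invoking Lemmas~\ref{lem.multi_col} and~\ref{lem:prod-ideal}, together with the standard fact that the tensor product of finitely generated integral domains over an algebraically closed field is again an integral domain.
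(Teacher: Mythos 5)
Your proposal is correct and takes essentially the same route as the paper, whose proof simply cites Lemma~\ref{lem.multi_col} and Lemma~\ref{lem:prod-ideal}: multi-column tableaux and columns in $\E{n}{m}$ fail primality, while columns in $\F{n}{m}$ succeed. You merely fill in the details the paper leaves implicit, namely the monomial-divisibility criterion for membership in $\langle T \rangle_{\A{n}{m}}$ and the use of the decomposition of Corollary~\ref{cor:tensorprod} (plus the fact that a tensor product of finitely generated domains over an algebraically closed field is a domain) for the backward direction.
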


\begin{proof}
    This follows directly from Lemma~\ref{lem.multi_col} and 
    Lemma~\ref{lem:prod-ideal}. 
\end{proof}

More generally, when $a \neq 0$, all such ideals with $T$ any column are prime.

\begin{theorem}
    For any column tableau $T$ and $a \neq 0$, 
    the ideal $\langle T - a \rangle_{\A{n}{m}}$ is prime. 
\end{theorem}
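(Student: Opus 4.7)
The plan is to embed the quotient $\A{n}{m}/\langle T - a \rangle$ into an integral domain via the polynomial embedding $\Omega$ of Theorem~\ref{thm:Polynomial-Iso}, and conclude primality from there.

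First, I would check that $X_T - a$ is prime in $\fk[X]$. Since $T$ is a column of height $k$ with strictly increasing entries $a_1 < \dots < a_k$, the monomial $X_T = \Omega(T) = x_{a_1}^{(1)} \cdots x_{a_k}^{(k)}$ is a product of $k$ distinct variables. Viewing $X_T - a$ as a degree-one polynomial in $x_{a_k}^{(k)}$, the leading coefficient $x_{a_1}^{(1)} \cdots x_{a_{k-1}}^{(k-1)}$ and constant term $-a$ are coprime because $a \in \fk^\times$. Hence $X_T - a$ is irreducible in $\fk[X]$, and since $\fk[X]$ is a UFD, it is prime, so $\fk[X]/(X_T - a)$ is a domain.

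Next, I would consider the composition $\phi \colon \A{n}{m} \xrightarrow{\Omega} \fk[X] \twoheadrightarrow \fk[X]/(X_T - a)$. The inclusion $\langle T - a\rangle \subseteq \ker\phi$ is immediate. For the reverse: given $f \in \ker\phi$, write $\Omega(f) = (X_T - a) g$ for some $g \in \fk[X]$; the crux is to show this forces $g \in \Omega(\A{n}{m})$. Granting this, $g = \Omega(h)$ for a unique $h \in \A{n}{m}$ by injectivity of $\Omega$, and then $\Omega(f) = \Omega((T-a)h)$ gives $f = (T - a)h \in \langle T - a\rangle$. Consequently $\A{n}{m}/\langle T - a\rangle$ injects into the domain $\fk[X]/(X_T - a)$, so it is itself a domain and $\langle T - a\rangle$ is prime.

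To prove the key claim, I would compare monomials. The subalgebra $\Omega(\A{n}{m})$ is the $\fk$-span of the pairwise distinct monomials $\{\Omega(S) : S \in \SSYT_m^n\}$, so membership can be tested monomial-by-monomial; moreover, the set $\mathcal{M}_\Omega$ of such monomials is closed under multiplication by $X_T$, since $\Omega(S) \cdot X_T = \Omega(S \star T) \in \mathcal{M}_\Omega$. Expanding $g = \sum g_N N$ and extracting the coefficient of each monomial $N \notin \mathcal{M}_\Omega$ in $(X_T - a)g$, which must vanish, yields $g_N = g_{N/X_T}/a$ when $X_T \mid N$ and $g_N = 0$ when $X_T \nmid N$; the latter is precisely where the hypothesis $a \neq 0$ enters. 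Since $N \notin \mathcal{M}_\Omega$ forces $N/X_T \notin \mathcal{M}_\Omega$ (contrapositive of the closure under $X_T$), an induction on the $X_T$-adic valuation of $N$ collapses the recursion and produces $g_N = 0$ for every $N \notin \mathcal{M}_\Omega$, so $g \in \Omega(\A{n}{m})$.

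The main obstacle is this monomial-chasing claim. The hypothesis $a \neq 0$ is indispensable: when $a = 0$ the recursion loses its base case, consistent with the preceding proposition showing that $\langle T \rangle$ fails to be prime whenever $T \in \E{n}{m}$.
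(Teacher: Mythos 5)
Your proof is correct, but it takes a genuinely different route from the one in the paper. The paper verifies primality directly from the definition, working upstream in the presentation $\fk[\G{n}{m}] \twoheadrightarrow \A{n}{m}$: given $f \star g \in \langle T - a \rangle_{\A{n}{m}}$, it chooses lifts $\widetilde{f}, \widetilde{g} \in \fk[\G{n}{m}]$ compatible with divisibility by $T$, argues that the product of the lifts vanishes under the substitution $T = a$, and then, viewing $\widetilde{f}$ (say) as a univariate polynomial in the variable $T$ over $\fk[\G{n}{m}\setminus\{T\}]$, factors out $T - a$ to conclude $f \in \langle T - a \rangle_{\A{n}{m}}$. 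You instead go downstream through the embedding $\Omega$ of Theorem~\ref{thm:Polynomial-Iso} and identify $\langle T - a \rangle_{\A{n}{m}}$ as the contraction $\Omega^{-1}\bigl((X_T - a)\bigr)$ of a principal prime of $\fk[X]$: your verification that $X_T - a$ is irreducible for $a \neq 0$, that $\operatorname{im}(\Omega)$ is the span of the pairwise distinct monomials $\Omega(S)$ with $S \in \SSYT_m^n$ and is closed under multiplication by $X_T$, and the valuation induction forcing the cofactor $g$ back into $\operatorname{im}(\Omega)$, are all sound, and properness of the ideal comes for free since the quotient embeds unitally in the nonzero ring $\fk[X]/(X_T - a)$. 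What the paper's argument buys is brevity and independence from $\Omega$, at the cost of a somewhat delicate choice of representatives and a partial-evaluation step; what yours buys is a sharper conclusion, namely $\ker\bigl(\A{n}{m} \to \fk[X]/(X_T - a)\bigr) = \langle T - a \rangle_{\A{n}{m}}$, hence an explicit realization of $\A{n}{m}/\langle T - a \rangle_{\A{n}{m}}$ as a subring of $\fk[X]/(X_T - a)$, with the lifting argument replaced by a mechanical monomial-support induction. One cosmetic point: the hypothesis $a \neq 0$ is actually used in both branches of your recursion (you also divide by $a$ when $X_T \mid N$), not only in the base case $X_T \nmid N$; this does not affect correctness.
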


\begin{proof}
    Suppose $f, g \in \A{n}{m}$ with $f \star g \in \langle T - a \rangle_{\A{n}{m}}$. 
    Let $\widetilde{f}$ and $\widetilde{g}$ be the representatives of 
    $f$ and $g$ in $\fk[\G{n}{m}]$ satisfying that if $T$ divides a monomial 
    of $f$ or $g$ then $T$ appears as a factor in the corresponding monomial 
    of $\widetilde{f}$ or $\widetilde{g}$, respectively. 
    Then $\widetilde{f} \cdot \widetilde{g} \in \fk[\G{n}{m}]$ is a representative of 
    $f \star g \in \A{n}{m}$, and $f \star g \in \langle T - a \rangle_{\A{n}{m}}$ 
    implies that $\widetilde{f} \cdot \widetilde{g}$ has a zero at $T = a$. 
    Since $\fk[\G{n}{m}]$ is an integral domain, without loss of 
    generality we may assume that $\widetilde{f}$ has a zero at $T = a$. 
    Consider now $\widetilde{f} \in (\fk[\G{n}{m} \setminus \{ T \}])[T]$ as a 
    polynomial in the single variable $T$ with coefficients in 
    $\fk[\G{n}{m} \setminus \{ T \}]$. Then $\widetilde{f}$ is divisible by 
    $T - a$, so it can be factored as $\widetilde{f} = (T-a) \widetilde{h}$ 
    for some $\widetilde{h} \in \fk[\G{n}{m} \setminus \{ T \}]$. 
    This implies $f \in \langle T - a \rangle_{\A{n}{m}}$ and concludes the proof. 
\end{proof}

For a fixed $m>0$, the finite dimensional highest weight irreducible 
representations $V(\lambda)$ of $\mathfrak{sl}_m$ are indexed by partitions $\lambda$ 
with at most $m$ rows, with \newword{crystal} or \newword{canonical} basis 
indexed by the set $\SSYT_m(\lambda)$ (see Section \ref{sec:crystals}). 
Indeed, if $\lambda$ has at most $n$ parts for some $n\leq m$, 
then  the crystal basis of the restriction $\operatorname{Res}_m^n(V(\lambda))$ 
as a module over $\mathfrak{sl}_n$ is in bijection with the set $\SSYT_m(\lambda)$. 
This motivates the following definition. 

\begin{definition}
    For $\lambda$ a partition having at most $n \leq m$ parts, 
    we define the \newword{crystal ideal} 
    $I(\lambda)$ to be the ideal in $\A{n}{m}$ generated by the set $\SSYT_m(\lambda)$. 
\end{definition}

\begin{lemma}\label{lem.shape}
    An element $f \in \A{n}{m}$ is in $I(1^k)$ if and only if every summand 
    of $f$ contains a column of height exactly $k$. 
\end{lemma}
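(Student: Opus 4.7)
The plan is to prove both implications by expanding everything in the tableau basis $\SSYT_m^n$ of $\A{n}{m}$ and exploiting the row-weight characterization of $\star$ stated after Definition~\ref{def:colreadingword}. Note first that $\SSYT_m(1^k)$ consists precisely of the columns of height exactly $k$ with entries in $\{1,\dots,m\}$, so these are the generators of $I(1^k)$.

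For the backward direction, assume every summand of $f$ contains a column of height exactly $k$. Given such a tableau summand $T$, pick one of its columns $C$ of height $k$ and let $T'$ be the tableau obtained by deleting that column. The row-weight characterization gives $\wt^{(i)}(T) = \wt^{(i)}(T') + \wt^{(i)}(C)$ for every $i$, so $T = T' \star C$, and since $C \in \SSYT_m(1^k)$ we conclude $T \in I(1^k)$. Taking the corresponding $\fk$-linear combination yields $f \in I(1^k)$.

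For the forward direction, write $f = \sum_i h_i \star C_i$ with $C_i \in \SSYT_m(1^k)$ and $h_i \in \A{n}{m}$, then expand each $h_i$ as a $\fk$-linear combination of tableaux to obtain $f = \sum a_{ij}\,(T_{ij} \star C_i)$ where $T_{ij} \in \SSYT_m^n$. The shape of every resulting tableau $T_{ij} \star C_i$ is $\mu + (1^k)$, where $\mu$ is the shape of $T_{ij}$. The crucial observation is that $\mu + (1^k)$ always has a column of height exactly $k$: since $\mu$ is a partition we have $\mu_k \geq \mu_{k+1}$, so for $i \leq k$ the $i$-th row of $\mu + (1^k)$ has length $\mu_i + 1 \geq \mu_k + 1$, while for $i > k$ its length is $\mu_i \leq \mu_{k+1} \leq \mu_k$; hence column $\mu_k + 1$ has cells in exactly rows $1, \dots, k$. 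Therefore every tableau appearing in the basis expansion of $f$ contains a column of height $k$, and since collecting like terms can only delete or combine such tableaux (never introduce new ones), every summand of the final expression for $f$ contains a column of height $k$.

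The only nontrivial ingredient is the shape computation showing that adding $(1^k)$ to any partition produces a Young diagram with a column of height exactly $k$; the rest is bookkeeping within the monoid algebra structure of $\A{n}{m}$ and the fact that $\SSYT_m^n$ is a $\fk$-basis, so that the ``summands of $f$'' are unambiguously defined.
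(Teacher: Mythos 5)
Your proof is correct and follows essentially the same route as the paper: decompose $f$ in the tableau basis, split off a height-$k$ column via the row-weight description of $\star$ for one direction, and expand products with generators of $I(1^k)$ for the other. The only difference is that you make explicit the shape computation (that $\mu+(1^k)$ always has a column of height exactly $k$) which the paper's proof leaves implicit, so your write-up is if anything slightly more complete.
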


\begin{proof}
    Suppose $f=\sum_{i} a_iT_i$ for some $a_i \in \fk$ and $T_i \in \SSYT_m^n$. 
    If every $T_i$ contains a column of height exactly $k$ then 
    $T_i = C_i \star S_i$ for some $C_i \in I(1^k)$ and $S_i \in \SSYT_m^n$, 
    so $f \in I(1^k)$. 
    If $f\in I(1^k)$ then $f=\sum_{j} b_j C_j\star T'_j$ for some 
    $b_j \in \fk$, $C_j \in I(1^k)$, and $T'_j \in \SSYT_m^n$, 
    so every summand of $f$ is in $I(1^k)$. 
\end{proof}

With this in hand we give a complete characterization of the prime crystal ideals of $\A{n}{m}$.

\begin{theorem}
    $I(\lambda)$ is prime in $\A{n}{m}$ if and only if 
    $\lambda = (1^k)$ for some $1 \leq k \leq n$. 
\end{theorem}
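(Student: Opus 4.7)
The plan is to prove the two directions separately.

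For the backward direction, assume $\lambda = (1^k)$. The key observation is that a partition $\mu$ has no column of height $k$ if and only if $\mu_k = \mu_{k+1}$ (setting $\mu_{n+1} = 0$), because the $j$-th column of $\mu$ has height $k$ precisely when $\mu_{k+1} < j \leq \mu_k$. Let $U_k^c \subseteq \SSYT_m^n$ denote the tableaux whose shape satisfies this equality; then Lemma~\ref{lem.shape} implies that $\{T + I(1^k) : T \in U_k^c\}$ is a $\fk$-basis of $\A{n}{m}/I(1^k)$. Crucially, $U_k^c$ is closed under $\star$: if $\mu_k = \mu_{k+1}$ and $\nu_k = \nu_{k+1}$, then $(\mu+\nu)_k = (\mu+\nu)_{k+1}$. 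Since it also contains the empty tableau, $U_k^c$ is a submonoid of $\SSYT_m^n$, inheriting the cancellative, commutative, and torsion-free properties of Proposition~\ref{prop:monoid}. It follows that $\A{n}{m}/I(1^k) \cong \fk[U_k^c]$ as $\fk$-algebras, and the latter is an integral domain by \cite[Theorem~8.1]{gilmer:semigroup} (as used in Proposition~\ref{prop:IntegralDomain-Noetherian-Reduced-Jacobson}). Hence $I(1^k)$ is prime.

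For the forward direction, suppose $\lambda \neq (1^k)$ for any $k$, so $\lambda_1 \geq 2$. Since $\lambda$ has at most $n \leq m$ parts, $\SSYT_m(\lambda)$ is nonempty; pick any $T$ in it and split off its leftmost column to factor $T = C \star T'$, where $C$ has shape $(1^{\lambda^t_1})$ and $T'$ has shape $\lambda - (1^{\lambda^t_1})$. The latter is a valid, nonempty partition since exactly the first $\lambda^t_1$ parts of $\lambda$ are positive and $\lambda_1 \geq 2$ forces at least two columns. Clearly $T \in I(\lambda)$ as it is a generator. On the other hand, since $I(\lambda)$ is $\fk$-spanned by products $R \star S$ with $R \in \SSYT_m(\lambda)$ and $\SSYT_m^n$ is a $\fk$-basis of $\A{n}{m}$, a single tableau $U$ lies in $I(\lambda)$ only when $U = R \star S$ for some such $R, S$, which forces the first part of $\mathrm{shape}(U)$ to be at least $\lambda_1$. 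The first parts of the shapes of $C$ and $T'$ are $1$ and $\lambda_1 - 1$ respectively, both strictly less than $\lambda_1 \geq 2$. Hence neither $C$ nor $T'$ belongs to $I(\lambda)$, so $I(\lambda)$ is not prime.

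The main nontrivial step is verifying that $U_k^c$ is closed under the star product, which relies on the elementary characterization ``no column of height $k$ iff $\mu_k = \mu_{k+1}$''. The forward direction is a straightforward shape obstruction: once one recognizes that membership of a single tableau in $I(\lambda)$ forces an explicit factorization with a factor of shape $\lambda$, the contradiction follows immediately by comparing first-row lengths.
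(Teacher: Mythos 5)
Your proof is correct. The forward direction is essentially the paper's: the paper deduces it from Lemma~\ref{lem.multi_col}, whose proof is exactly your splitting $T = C \star T'$ of a tableau of shape $\lambda$ with $\lambda_1 \geq 2$, together with the observation that a single tableau lying in the ideal must have shape at least $\lambda$; you simply carry out explicitly the (routine) adaptation from the principal ideal $\langle T\rangle_{\A{n}{m}}$ to $I(\lambda)$, which is if anything more careful than the paper's bare citation of the lemma. The backward direction is where you diverge. The paper argues by contradiction: it writes $g = g' + \sum_i a_iT_i$ and $h = h' + \sum_j b_jS_j$ with $g',h' \in I(1^k)$ and $T_i, S_j$ containing no column of height $k$, and uses Lemma~\ref{lem.shape} to conclude that $(\sum_i a_iT_i)(\sum_j b_jS_j) \notin I(1^k)$ even though it equals $gh - g'h - h'g + g'h' \in I(1^k)$. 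You instead note that the tableaux whose shape satisfies $\mu_k = \mu_{k+1}$ form a submonoid $U_k^c$ of $\SSYT_m^n$, that Lemma~\ref{lem.shape} makes $\{T + I(1^k) \mid T \in U_k^c\}$ a $\fk$-basis of the quotient, and hence that $\A{n}{m}/I(1^k) \cong \fk[U_k^c]$, a domain by \cite[Theorem 8.1]{gilmer:semigroup} exactly as in Proposition~\ref{prop:IntegralDomain-Noetherian-Reduced-Jacobson}. Both arguments hinge on the same combinatorial fact --- additivity of shapes under $\star$ makes ``no column of height $k$'' multiplicative (this is precisely the step ``$T_iS_j$ does not contain a column of height $k$'' in the paper) --- but your packaging is more structural: it exhibits the quotient explicitly as another cancellative, torsion-free monoid algebra, a byproduct the paper's contradiction argument does not give, at the cost of invoking Gilmer's theorem a second time rather than staying elementary. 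One shared implicit convention: like the paper, you tacitly exclude the empty partition, for which $I(\lambda)$ is the unit ideal and the statement is vacuous.
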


\begin{proof}
    The forward direction follows directly from Lemma~\ref{lem.multi_col}. 
    For the backward direction, suppose $\lambda = (1^k)$ for some 
    $1\leq k \leq n$. If $I(1^k)$ is not prime then there exists 
    $f \in I(1^k)$ with $f = gh$ and $g,h \notin I(1^k)$. 
    Write $g=g'+\sum_i a_i T_i$ and 
    $h = h'+\sum_j b_jS_j$ for some nonzero $a_i,b_j \in \fk$ such that 
    $g', h' \in I(1^k)$ and $T_i, S_j \notin I(1^k)$. Then $T_i$ and $S_j$ 
    do not contain a column of height $k$ for all $i$ and $j$ by 
    Lemma~\ref{lem.shape}, so $T_iS_j$ does not contain a column of height $k$ 
    for all $i$ and $j$, so $(\sum_i a_iT_i)(\sum_j b_j S_j) \notin I(1^k)$. 
    Also $(\sum_i a_iT_i)(\sum_j b_j S_j) = 
    gh - g'h - h'g + g'h' \in I(1^k)$, a contradiction. 
\end{proof}

Moreover, as a corollary of Theorem~\ref{prop:IntegralDomain-Noetherian-Reduced-Jacobson} 
we obtain that all the prime ideals above arise as the intersection 
of the maximal ideals that contain them. 

\begin{corollary}\label{thm:crystal-intersection}
    Let $\G{n}{m} = \{ T_1, \dots, T_d \}$ and $a \neq 0$. 
    \begin{enumerate}
        \item $I(1^k) = \bigcap_{\underline{t}} M_{\underline{t}}$ such that 
            $\pi_{\underline{t}}(T)=0$ for all $T \in \SSYT_m^n(1^k)$. 
        \item $\langle T_{i} - a \rangle = \bigcap_{\underline{t}} M_{\underline{t}}$ 
            such that $t_i = a$. 
    \end{enumerate}
\end{corollary}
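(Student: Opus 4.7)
The plan is to bootstrap off the two theorems proved immediately before, together with the Jacobson property established in Proposition~\ref{prop:IntegralDomain-Noetherian-Reduced-Jacobson}. Recall that a commutative Jacobson ring has the property that every prime ideal equals the intersection of the maximal ideals that contain it; since both $I(1^k)$ and $\langle T_i - a\rangle$ (for $a \neq 0$) have just been shown to be prime, the statement will reduce to identifying exactly which maximal ideals $M_{\underline{t}}$ sit above each of them. The classification in Theorem~\ref{thm:Maxideals} together with the identification $M_{\underline{t}} = \ker \pi_{\underline{t}}$ makes this containment criterion immediate.

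For part (1), I would first note that $I(1^k)$ is generated by $\SSYT_m(1^k)$ by definition, so $I(1^k) \subseteq M_{\underline{t}}$ holds if and only if every generator $T \in \SSYT_m(1^k)$ lies in $M_{\underline{t}}$, which by $M_{\underline{t}} = \ker\pi_{\underline{t}}$ is equivalent to $\pi_{\underline{t}}(T)=0$ for all such $T$. Since $k \leq n$, the sets $\SSYT_m(1^k)$ and $\SSYT_m^n(1^k)$ coincide, matching the hypothesis of the statement. Invoking the Jacobson property on the prime ideal $I(1^k)$ then yields the asserted equality.

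For part (2), the analogous observation is that $\langle T_i - a\rangle \subseteq M_{\underline{t}}$ if and only if $T_i - a \in \ker \pi_{\underline{t}}$, which by linearity and the fact that $\pi_{\underline{t}}(T_i) = t_i$ amounts to $t_i = a$. Applying the Jacobson property again to the prime ideal $\langle T_i - a\rangle$ produces the desired intersection description.

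There is essentially no serious obstacle in this argument, as the heavy lifting (proving primality of both families and proving the Jacobson property) has already been done. The only point that requires momentary care is ensuring that the intersection on the right-hand side is actually nonempty, which in both cases is clear: for part~(1) one can take any $\underline{t}$ whose entries indexed by columns of height $k$ are all zero (a condition compatible with $\mathcal{V}(\R{n}{m})$ because setting these generators to zero forces no contradictions in the product relations), and for part~(2) one can take any $\underline{t}$ with $t_i = a$, for instance extending via the evaluation maps of Section~\ref{subsec:max-ideals-evaluation}.
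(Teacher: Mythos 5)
Your proposal is correct and matches the paper's intended argument: the corollary is stated there as an immediate consequence of the Jacobson property from Proposition~\ref{prop:IntegralDomain-Noetherian-Reduced-Jacobson} combined with the primality results proved just before, exactly as you argue, with the containment criteria $I(1^k)\subseteq M_{\underline{t}}$ iff $\pi_{\underline{t}}$ kills all columns of height $k$ and $\langle T_i-a\rangle\subseteq M_{\underline{t}}$ iff $t_i=a$. Your closing worry about nonemptiness is unnecessary, since any proper (prime) ideal in a commutative unital ring is contained in some maximal ideal, but it does no harm.
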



\section{Representations}\label{sec:rep}

In this section we briefly justify the difficulty in understanding the 
representation theory of the tableaux algebra, 
showing that it contains the representation theory 
of all finite dimensional algebras. 
Along the way we give a complete description of all the finite dimensional 
irreducible representations of the tableaux algebra. 

It is well known that the Drozd ring $\fk[x,y]/(x^2,xy^2,y^3)$ 
is of wild representation type~\cite{drozd:wild}. 
This means that there is a representation embedding 
$\rep(\fk\langle x_1,x_2\rangle) \rightarrowtail \rep(\fk[x,y]/(x^2,xy^2,y^3))$, 
or equivalently that for any finitely generated algebra 
$\Lambda$ there exists a representation embedding 
$\rep(\Lambda) \rightarrowtail \rep(\fk[x,y]/(x^2,xy^2,y^3))$, 
see for example~\cite{ringel:rep-type}. 
Since there is a surjective map of algebras 
$\A{n}{m} \twoheadrightarrow \fk[x,y]/(x^2,xy^2,y^3)$ 
as a consequence of Corollary~\ref{cor:tensorprod} 
and Definition~\ref{def:vector-space-quadratic}, 
we obtain a full embedding of categories 
$\rep(\fk[x,y]/(x^2,xy^2,y^3)) \rightarrowtail \rep(\A{n}{m})$ showing that 
the representation theory of the tableaux algebra is at least as badly 
behaved as the representation theory of a finite dimensional algebra of 
wild representation type. 

An alternative reasoning without appealing to Drozd's dichotomy theorem 
and avoiding finite dimensional algebras altogether 
would be to use the also well know fact that 
there is a full embedding of categories 
$\rep(\fk\langle x_1,\dots,x_n\rangle) \rightarrowtail \rep(\fk[x_1,x_2])$ 
for all positive integers $n$, see~\cite{gelfand-ponomarev:wild}. 
Again, there is a surjective map of algebras 
$\A{n}{m} \twoheadrightarrow \fk[x,y]$ 
by Corollary~\ref{cor:tensorprod} 
and Definition~\ref{def:vector-space-quadratic}, 
yielding a full embedding of categories 
$\rep(\fk[x,y]) \rightarrowtail \rep(\A{n}{m})$. 
Thus the representation theory of the tableaux algebra is as bad as 
the representation theory of the free associative algebra. 

Furthermore, the above behavior is independent of the geometric 
environment where $\A{n}{m}$ resides, since both of the embeddings 
of the partial flag variety into projective space realizing said 
geometry have at least two degrees of freedom, as we note in 
Remark~\ref{rema:embedding-flag-variety}. 
Although this makes classifying 
the indecomposable representations of $\A{n}{m}$ a hopeless task, 
since it is commutative, 
we can give an explicit combinatorial description of its 
irreducible representations. 
 
\begin{corollary}\label{coro:finite-dimensional-irreducible}
    For any $m\geq n \in \mathbb{N}$, a 
    representation $V$ of $\A{n}{m}$ is irreducible if and only if $V$ is 
    one-dimensional. Moreover, up to isomorphism, these are classified by 
    a choice of scalars $\{ \lambda_T \in \fk \mid T \in \SSYT_m^n(1^k) 
    \text{ with } 1\leq k \leq n \}$ satisfying the relations given by $\R{n}{m}$. 
    These uniquely 
    determine the linear maps $\rho: \A{n}{m} \to \End(V)$ which induce the 
    structure maps $\rho(T) : V \to V$ given by linearly extending 
    $\rho(T)(v) = \lambda_T v$ for each 
    $T \in  \SSYT_m^n(1^k)$ with $1 \leq k \leq n$. 
\end{corollary}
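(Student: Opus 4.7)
The plan is to reduce the claim to the classification of maximal ideals of $\A{n}{m}$ provided in Theorem~\ref{thm:Maxideals}. The reverse direction of the biconditional is immediate, since any one-dimensional representation has no proper nonzero subspaces and is trivially irreducible.

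For the forward direction, I would let $V$ be any irreducible $\A{n}{m}$-representation, without assuming finite-dimensionality from the outset. Picking any $0 \neq v \in V$, the evaluation map $\A{n}{m} \to V$, $a \mapsto \rho(a)(v)$, is surjective by irreducibility, so $V \cong \A{n}{m}/I$ as $\A{n}{m}$-modules, where $I$ is the annihilator of $v$. Commutativity of $\A{n}{m}$ ensures that the $\A{n}{m}$-submodules of $\A{n}{m}/I$ are precisely the ideals of $\A{n}{m}$ containing $I$, so simplicity of $V$ forces $I$ to be a maximal ideal. By Theorem~\ref{thm:Maxideals} we then have $I = M_{\underline{t}}$ for some $\underline{t} \in \mathcal{V}(\R{n}{m}) \times \fk^{|\F{n}{m}|}$, and inspecting the presentation~\eqref{eq:maxideal} shows that the composition $\fk \hookrightarrow \A{n}{m} \twoheadrightarrow \A{n}{m}/M_{\underline{t}}$ is a $\fk$-algebra isomorphism, because each generator of $M_{\underline{t}}$ identifies the image of a column $T_i$ with the scalar $t_i$. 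Hence $V \cong \fk$ is one-dimensional.

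For the classification and explicit description of the action, I would fix a one-dimensional $V$ and observe that every $\fk$-linear endomorphism of $V$ is a scalar, so the algebra morphism $\rho : \A{n}{m} \to \End(V) \cong \fk$ is uniquely determined by its values $\lambda_T \coloneqq \rho(T)$ as $T$ ranges over the minimal generating set $\G{n}{m}$ of Theorem~\ref{thm:finite generation}. In order for the assignment $T \mapsto \lambda_T$ to extend to a bona fide algebra morphism, the tuple $(\lambda_T)_{T \in \G{n}{m}}$ must lie in $\mathcal{V}(\R{n}{m}) \times \fk^{|\F{n}{m}|}$, that is, it must satisfy exactly the product relations generating $\R{n}{m}$; conversely, by Corollary~\ref{cor:tensorprod} any such tuple extends uniquely to an algebra morphism $\A{n}{m} \to \fk$. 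This produces the claimed bijective correspondence between isomorphism classes of irreducibles, tuples $\underline{t}$ satisfying the relations of $\R{n}{m}$, and maximal ideals of $\A{n}{m}$, and simultaneously yields the explicit formula $\rho(T)(v) = \lambda_T v$.

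The only step requiring some care is justifying that the annihilator of a cyclic simple module is maximal without assuming $V$ is finite-dimensional a priori; this is however entirely standard commutative algebra via the submodule-ideal correspondence, and once noted the rest of the corollary is a direct translation of the earlier classification of maximal ideals, requiring no further genuinely new ingredients.
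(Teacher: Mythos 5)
Your proof is correct and takes essentially the route the paper intends: the corollary is stated without a separate proof precisely because it follows, exactly as you argue, from the commutativity of $\A{n}{m}$ (simple modules are quotients by maximal ideals) together with the explicit description of maximal ideals in Theorem~\ref{thm:Maxideals}, whose generators $T_i - t_i$ force the quotient to be $\fk$. Your extra care in not assuming finite-dimensionality is consistent with the paper's subsequent remark that $\A{n}{m}$ has no infinite-dimensional irreducible modules.
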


As expected, these coincide with the maximal ideals of 
Section~\ref{sec:ideals-spectra}. Consequently, this gives another 
perspective of why 
$\A{n}{m}$ does not have an infinite dimensional irreducible module. 
Unfortunately, as a consequence of 
Corollary~\ref{coro:finite-dimensional-irreducible}, 
$\A{n}{m}$ cannot have a finite dimensional faithful irreducible module. 
This provides an alternative reasoning of why $\A{n}{m}$ is not primitive. 
However, since $\A{n}{m}$ is semiprimitive, 
it has a faithful semisimple module. 
We now search for it. 

The following corollary follows directly from Theorem \ref{thm:Polynomial-Iso}.
\begin{corollary}
    The algebra morphism $\Omega$ induces a faithful action 
    of $\A{n}{m}$ on $\fk[X]$. 
\end{corollary}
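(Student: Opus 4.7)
The plan is to unpack what the induced action is and then reduce faithfulness to injectivity of $\Omega$, which is already available from Theorem~\ref{thm:Polynomial-Iso}. The natural action of $\A{n}{m}$ on $\fk[X]$ through an algebra morphism $\Omega \colon \A{n}{m} \to \fk[X]$ is the pullback action, namely $f \cdot g \coloneqq \Omega(f) \, g$ for $f \in \A{n}{m}$ and $g \in \fk[X]$. I would first spell this out and verify briefly that it does define an $\A{n}{m}$-module structure on $\fk[X]$, which follows from the fact that $\Omega$ is a unital algebra morphism together with the commutativity of $\fk[X]$.

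Next, I would tackle faithfulness directly. Suppose $f \in \A{n}{m}$ satisfies $f \cdot g = 0$ for every $g \in \fk[X]$. Specializing to $g = 1 \in \fk[X]$ yields $\Omega(f) = \Omega(f) \cdot 1 = 0$. Since $\Omega$ is injective by Theorem~\ref{thm:Polynomial-Iso}, this forces $f = 0$, establishing that the annihilator of $\fk[X]$ in $\A{n}{m}$ is trivial.

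There is no real obstacle: the only nontrivial input, namely injectivity of $\Omega$, is already in hand from Theorem~\ref{thm:Polynomial-Iso}, whose proof is in turn the combinatorial observation that $\Omega(S) = \Omega(T)$ forces $\wt_j^{(i)}(S) = \wt_j^{(i)}(T)$ for all admissible $i,j$, and hence $S = T$ as semistandard tableaux. So the argument is really just the standard fact that pullback along an injective map of commutative rings gives a faithful action on the target viewed as a module over itself.
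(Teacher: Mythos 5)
Your proposal is correct and matches the paper's argument: the corollary is stated there as an immediate consequence of Theorem~\ref{thm:Polynomial-Iso}, and your write-up simply makes explicit the pullback module structure and the reduction of faithfulness to injectivity of $\Omega$ via evaluation at $1$. Nothing further is needed.
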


Unfortunately $\fk[X]$ is not semisimple as a module over $\A{n}{m}$, 
so it is not the module witnessing that $\A{n}{m}$ is semiprimitive. 
Instead, we use the maximal spectrum $\maxSpec(\A{n}{m})$. 

\begin{proposition}\label{prop:faithfulssmodule}
    The module 
    \[ 
        \bigoplus_{\underline{t} \in \mathcal{V}(\R{n}{m}) \times 
        \fk^{|\F{n}{m}|}} {\A{n}{m} / M_{\underline{t}}}
    \]
    is faithful and semisimple over $\A{n}{m}$. 
\end{proposition}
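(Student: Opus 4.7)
The plan is to verify semisimplicity and faithfulness separately, using the classification of maximal ideals from Theorem~\ref{thm:Maxideals} together with the vanishing of the Jacobson radical from Corollary~\ref{cor:jacobson}.

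For semisimplicity, I would first observe that each summand $\A{n}{m}/M_{\underline{t}}$ is a simple $\A{n}{m}$-module. Indeed, since $\A{n}{m}$ is commutative and $M_{\underline{t}}$ is a maximal ideal, the quotient $\A{n}{m}/M_{\underline{t}} \cong \fk$ is a field, and its $\A{n}{m}$-submodules correspond to ideals of this field, of which there are only two. A direct sum of simple modules is semisimple by definition, which handles this half of the claim.

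For faithfulness, I would compute the annihilator of the module. Since for a commutative ring $R$ and a maximal ideal $M$ the annihilator of $R/M$ (as an $R$-module) is precisely $M$, the annihilator of the direct sum is
\begin{equation*}
    \operatorname{Ann}\left(\bigoplus_{\underline{t}} \A{n}{m}/M_{\underline{t}}\right)
    = \bigcap_{\underline{t} \in \mathcal{V}(\R{n}{m}) \times \fk^{|\F{n}{m}|}} M_{\underline{t}}.
\end{equation*}
By Theorem~\ref{thm:Maxideals}, the tuples $\underline{t}$ range over \emph{all} maximal ideals of $\A{n}{m}$, so this intersection is exactly the Jacobson radical of $\A{n}{m}$. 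Corollary~\ref{cor:jacobson} then gives that this radical is zero, so the annihilator vanishes and the module is faithful.

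There is no substantive obstacle here: the work was done in establishing Theorem~\ref{thm:Maxideals} (indexing all maximal ideals by tuples $\underline{t}$) and Corollary~\ref{cor:jacobson} (vanishing of the Jacobson radical via the reducedness and finite generation of $\A{n}{m}$). The only minor subtlety worth flagging in the writeup is to note explicitly that $\operatorname{Ann}_R(R/M) = M$ for a maximal ideal $M$ of a commutative ring $R$, to justify the interchange of annihilator of a direct sum with intersection of annihilators of the summands.
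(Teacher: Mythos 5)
Your argument is correct and is exactly the intended one: the paper states this proposition without proof, relying on the same standard facts you cite, namely that each $\A{n}{m}/M_{\underline{t}}$ is simple, that the annihilator of the direct sum is $\bigcap_{\underline{t}} M_{\underline{t}}$, which by Theorem~\ref{thm:Maxideals} is the Jacobson radical, and that this radical vanishes by Corollary~\ref{cor:jacobson}. Your explicit remark that $\operatorname{Ann}_R(R/M)=M$ is a reasonable detail to include, but there is nothing here that diverges from the paper's (implicit) reasoning.
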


Morally, this means that to understand $\A{n}{m}$ we only need its maximal 
spectrum. 


\section{Toric degenerations of partial flag varieties}\label{sec:toric}

In this section we prove that the varieties $\mathcal{V}(\A{n}{m})$ arise as 
toric degenerations of certain partial flag varieties. 
We recall the necessary 
constructions, but also refer the reader to~\cite{GL96} 
and~\cite[Chapter 14]{MSbook} 
for additional details and background on 
toric degenerations of flag varieties. 
In particular, given a $1$-parameter flat family, 
we say that the special fiber is a 
\newword{flat degeneration} of the generic fiber. 

We begin by observing that our varieties are indeed toric.

\begin{theorem}
    For any positive integers $m\geq n$, the variety $\mathcal{V}(\R{n}{m})$ is toric. 
\end{theorem}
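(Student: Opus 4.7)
The plan is to exhibit the coordinate ring $\fE{n}{m}$ of $\mathcal{V}(\R{n}{m})$ as the monoid algebra of a finitely generated, commutative, cancellative, and torsion-free monoid, as any such semigroup algebra is the coordinate ring of an affine toric variety.

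First, by Corollary~\ref{cor:tensorprod} we may identify $\fE{n}{m}$ with the subalgebra $\fE{n}{m} \otimes 1 \subseteq \fE{n}{m} \otimes \fF{n}{m} \cong \A{n}{m}$; concretely, this is the subalgebra of $\A{n}{m}$ generated by $\E{n}{m}$. Composing with the injective algebra morphism $\Omega \colon \A{n}{m} \hookrightarrow \fk[X]$ from Theorem~\ref{thm:Polynomial-Iso}, we obtain an isomorphism of $\fE{n}{m}$ onto the subalgebra of $\fk[X]$ generated by the monomials $\{\Omega(T) \mid T \in \E{n}{m}\}$.

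Next, any subalgebra of $\fk[X]$ generated by monomials is automatically a monoid algebra: its $\fk$-basis is in bijection with the submonoid $S_E \subseteq \mathbb{Z}_{\geq 0}^{nm}$ generated by the exponent vectors of the defining monomials, and multiplication corresponds to addition in $S_E$. As a submonoid of the free commutative monoid $\mathbb{Z}_{\geq 0}^{nm}$, the monoid $S_E$ is commutative, cancellative, and torsion-free; it is also finitely generated by construction. We therefore conclude that $\fE{n}{m} \cong \fk[S_E]$, and hence $\mathcal{V}(\R{n}{m}) = \maxSpec(\fE{n}{m})$ is an affine toric variety.

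The main subtlety is verifying that the subalgebra $\fE{n}{m} \otimes 1 \subseteq \A{n}{m}$ maps, under $\Omega$, exactly onto the monomial subalgebra $\fk[S_E]$, so that no additional relations appear beyond those already captured by $\R{n}{m}$. This follows from the injectivity of $\Omega$ combined with Lemma~\ref{lem:prod-ideal}, which ensures that all binomial relations among the images $\{\Omega(T) \mid T \in \G{n}{m}\}$ involve only columns from $\E{n}{m}$; hence tensoring away the free factor $\fF{n}{m}$ corresponds precisely to restricting $\Omega$ to $\fE{n}{m}$, leaving the monomial structure intact.
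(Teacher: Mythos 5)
Your argument is correct, but it takes a genuinely different route from the paper. The paper's proof is a two-line citation: $\R{n}{m}$ is prime by Corollary~\ref{cor:tensorprod}, it is binomial because the relations are the quadratic product relations of Theorem~\ref{thm:quadratic}, and a prime binomial ideal cuts out a toric variety by the Eisenbud--Sturmfels theorem. You instead realize the coordinate ring $\fE{n}{m}$ directly as an affine semigroup ring: using Corollary~\ref{cor:tensorprod} to identify $\fE{n}{m}$ with the subalgebra of $\A{n}{m}$ generated by $\E{n}{m}$, and then the monomial embedding $\Omega$ of Theorem~\ref{thm:Polynomial-Iso} to identify it with $\fk[S_E]$ for the submonoid $S_E \subseteq \mathbb{Z}_{\geq 0}^{nm}$ generated by the exponent vectors of the column monomials. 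The chain of identifications is sound (injectivity of $a \mapsto a \otimes 1$ plus injectivity of $\Omega$ is all you need; your final paragraph's appeal to Lemma~\ref{lem:prod-ideal} is not really necessary, since the exactness of the kernel is already packaged into Corollary~\ref{cor:tensorprod}), and the standard fact that a finitely generated submonoid of a lattice yields an affine (not necessarily normal) toric variety matches the notion of toric used via the prime-binomial characterization. What your approach buys is explicitness: it exhibits the defining semigroup --- essentially the cone of row-weight vectors of the columns in $\E{n}{m}$ --- and hence the dense torus and the full toric data, rather than only certifying toricness abstractly; the paper's proof buys brevity, reducing everything to primeness and quadraticity already established earlier.
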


\begin{proof}
    The ideal $\R{n}{m}$ is prime by Corollary~\ref{cor:tensorprod} and 
    binomial by the proof of Theorem~\ref{thm:quadratic}. 
    A prime binomial ideal defines a toric variety by~\cite{ES-binomial}. 
\end{proof}

It is a classical result that the set of semistandard Young tableaux $\SSYT_n^n$ 
is in bijection with the set of Gelfand--Tsetlin patterns $\GT_n$, 
see for example~\cite{GTpatterns}. 
As semigroups, the star operation on tableaux coincides with the usual 
additive structure on $\GT$-patterns. Consequently, when $m=n$, 
the tableaux algebra is isomorphic to the \newword{Gelfand--Tsetlin semigroup ring}: 
\[
    \A{n}{n} \cong \GGTT_n. 
\]
The \newword{Pl{\"u}cker algebra} is the quotient of the polynomial ring 
$\fk[p_\sigma]$ with $\sigma \subseteq \{1,\dots, n\}$ by the so-called 
\newword{Pl{\"u}cker relations}, see~\cite[Definition 14.5]{MSbook}. 
The set of generators $p_\sigma$, called \newword{Pl{\"u}cker coordinates}, 
are clearly in bijection with $\G{n}{n}$, 
the set of column semistandard Young tableaux. 
It was shown in~\cite{KM03} and~\cite[Corollary 14.24]{MSbook} that the 
ring of Pl{\"u}cker coordinates quotiented by certain 
initial ideal of the ideal of 
Pl{\"u}cker relations coincides with 
the Gelfand--Tsetlin semigroup ring $\GGTT_n$. 
Consider the special linear group $SL_n$ with a Borel subgroup $B$. 

\begin{theorem}[\cite{GL96, KM03}]\label{thm:degeneration-n=m}
    The Tableaux algebra $\A{n}{n}$ is a flat degeneration of the Pl{\"u}cker algebra, 
    so $\mathcal{V}(\R{n}{n})$ is a toric degeneration of the complete flag variety $SL_n/B$. 
\end{theorem}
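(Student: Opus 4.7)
The plan is to package the Gröbner degeneration of \cite{KM03} (see also \cite[Corollary 14.24]{MSbook}) in the notation established above, with \cite{GL96} providing an equivalent incarnation via standard monomial theory. By construction, the Plücker algebra is the homogeneous coordinate ring of the Plücker embedding of $SL_n/B$, so its projective spectrum recovers the complete flag variety, which means that a flat degeneration at the level of graded algebras will descend to the claimed geometric degeneration.

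The heart of the argument is to select the Gelfand--Tsetlin term order on the polynomial ring $\fk[p_\sigma]$, for which \cite{KM03} computes that the initial ideal of the Plücker ideal $I_{\mathrm{Pl}}$ is a binomial toric ideal generated by the Plücker relations encoding concatenation of column tableaux. Under the natural bijection $p_\sigma \leftrightarrow C_\sigma \in \G{n}{n}$ sending a subset to the associated column standard tableau, these binomials are precisely the product relations defining $\R{n}{n}$, so that $\mathrm{in}_{<}(I_{\mathrm{Pl}}) = \R{n}{n}$. Noting that when $m = n$ we have $\G{n}{n} = \E{n}{n} \sqcup \F{n}{n}$, Corollary \ref{cor:tensorprod} then identifies the resulting toric quotient with $\A{n}{n} \cong \GGTT_n$.

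With this matching of initial ideals in hand, I would invoke the standard Gröbner degeneration machinery (\cite[Theorem 15.17]{MSbook}) to produce a flat family over $\Spec \fk[t]$ whose generic fiber is the Plücker algebra and whose special fiber at $t = 0$ is $\A{n}{n}$. Passing to the associated $\operatorname{Proj}$ yields the asserted flat degeneration of $SL_n/B$ to $\mathcal{V}(\R{n}{n})$, and the latter is toric by the preceding theorem. The main technical step is the identification $\mathrm{in}_{<}(I_{\mathrm{Pl}}) = \R{n}{n}$; this is exactly the content of the combinatorial analyses in \cite{KM03, GL96}, so beyond a careful translation of conventions no new calculation is required on our part.
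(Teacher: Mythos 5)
Your proposal is correct and follows essentially the route the paper intends: this theorem is stated as an attributed result, and the surrounding discussion (the identification $\A{n}{n} \cong \GGTT_n$ via Gelfand--Tsetlin patterns together with \cite{KM03} and \cite[Corollary 14.24]{MSbook}, which identify the quotient by the initial ideal of the Pl\"ucker ideal with the Gelfand--Tsetlin semigroup ring) is exactly the matching of initial ideals and Gr\"obner degeneration you describe. The only nitpick is bibliographic: the flat-family statement you want is Eisenbud's Gr\"obner degeneration theorem rather than \cite[Theorem 15.17]{MSbook}, but this does not affect the argument.
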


We now generalize Theorem~\ref{thm:degeneration-n=m} and realize 
$\mathcal{V}(\R{n}{m})$ as a toric degeneration for all $m>n$. 
Fix $k < m$ and 
recall the following setup from~\cite{GL96}. Denote by $P_k \subset SL_m$ 
the maximal parabolic subgroup of upper block triangular matrices $(M_{i,j})$ 
satisfying $M_{i,j}=0$ for all $k<i\leq m$ and $1\leq j \leq k$, 
and let $W^k$ denote the set of minimal left coset generators of 
$\mathfrak{S}_m/(\mathfrak{S}_k \times \mathfrak{S}_{m-k})$, 
for $\mathfrak{S}_m$ the symmetric group on $m$ elements. 
It is easy to see that $W^k$ is in bijection with 
the set $\SSYT_m(1^k)$. Set 
\[
    \Q{n}{m} \coloneqq \bigcap_{k=1}^n P_k 
    \qquad \text{and} \qquad 
    \HH{n}{m} \coloneqq \bigcup_{k=1}^n W^k 
\]
so that $SL_m/\Q{n}{m}$ is the partial flag variety 
$\{ 0 \subsetneq V_1 \subsetneq \cdots \subsetneq V_n \subsetneq \fk^m \mid \dim(V_i)=i\}$. 
Let 
$L^{\underline{a}} \coloneqq L_{1}^{a_1}\otimes \dots \otimes L_{n}^{a_n}$ 
where $L_k$ is the ample generator of 
the Picard group $\mathrm{Pic}(SL_m/P_k)$. 
Then $\HH{n}{m}$ is an indexing set for the generators of the $\fk$-algebra 
of global sections 
$\bigoplus_{\underline{a}}\Gamma(SL_m/\Q{n}{m},L^{\underline{a}})$. 

Note that $\HH{n}{m}$ is in bijection with 
the set of columns in $\SSYT_m^n$, so $\HH{n}{m}=\G{n}{m}$ is a distributive lattice with 
partial order $\leq$ given as follows. 
Suppose $T \in \SSYT^n_m(1^a)$ and $T' \in \SSYT^n_m(1^b)$ with column reading 
words $w_a\dots w_1$ and $u_b\dots u_1$. We say $T\leq T'$ if $a\geq b$ and 
$w_i\leq u_i$ for all $1\leq i \leq b$. The join $T \vee T'$ is the column 
with $\min(a,b)$ rows with $i^{th}$ entry $\max(w_i,u_i)$. 
The meet $T \wedge T'$ is the column with $\max(a,b)$ rows with $i^{th}$ 
entry $\max(w_i,u_i)$ for $1 \leq i \leq \min(a,b)$, $i^{th}$ 
entry $w_i$ when $a > b$ and $\min(a,b) \leq i \leq \max(a,b)$, and $i^{th}$ 
entry $u_i$ when $a > b$ and $\min(a,b) \leq i \leq \max(a,b)$. 
In other words, recalling the construction in Section \ref{subsec:enumeration}, 
we have that $T \vee T' = R_{T,T'}$ and $T \wedge T'=L_{T,T'}$. 

\begin{remark}
    The partial order on $\SSYT_m(1^k)$ described above 
    is nothing more than the reverse of the natural ranked poset structure 
    coming from the crystal graph of the associated $\mathfrak{sl}_m$-representation $V(1^k)$. 
\end{remark}

\begin{lemma}\label{lem:initial-ideal}
    Let $\II{n}{m}$ be the following binomial ideal of $\fk[\HH{n}{m}]$, 
    \[
        \II{n}{m} \coloneqq \langle xy - (x\wedge y)(x\vee y)\mid 
        x,y \in \HH{n}{m} \text{ are incomparable}\rangle. 
    \]
    Then $\fk[\HH{n}{m}] = \fk[\G{n}{m}]$ and $\II{n}{m} = \R{n}{m}$. 
\end{lemma}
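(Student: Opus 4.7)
The plan is to handle the two asserted equalities separately, with the first being essentially tautological and the second reducing cleanly to the meet/join description given in the paragraph just before the lemma.

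For $\fk[\HH{n}{m}] = \fk[\G{n}{m}]$, I would simply invoke the bijection between $W^{k}$ and $\SSYT_m(1^k)$ already noted in the text, which identifies $\HH{n}{m} = \bigcup_{k=1}^{n} W^k$ with $\G{n}{m}$ on the nose. Since both polynomial rings are built on the same indexing set, the equality is literal.

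For $\II{n}{m} \subseteq \R{n}{m}$, I would take an arbitrary generator $xy - (x \wedge y)(x \vee y)$ with $x, y \in \HH{n}{m}$ incomparable. Using the characterization $T \wedge T' = L_{T,T'}$ and $T \vee T' = R_{T,T'}$ recorded immediately before the lemma, the two-column tableau $x \star y$ has left column $x \wedge y$ and right column $x \vee y$. Hence $x \star y = (x \wedge y) \star (x \vee y)$ inside $\SSYT_m^n$, which is a product relation in the sense of the definition of $\R{n}{m}$, and so the generator lies in $\R{n}{m}$.

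For the reverse inclusion $\R{n}{m} \subseteq \II{n}{m}$, I would pick a generator $T_1 T_2 - T_3 T_4$ of $\R{n}{m}$, corresponding to some two-column tableau $S = T_1 \star T_2 = T_3 \star T_4$ with left and right columns $L$ and $R$. Since both pairs produce $S$, the meet/join formulas force $T_1 \wedge T_2 = L = T_3 \wedge T_4$ and $T_1 \vee T_2 = R = T_3 \vee T_4$, so I can telescope
\[
T_1 T_2 - T_3 T_4 = \bigl(T_1 T_2 - (T_1 \wedge T_2)(T_1 \vee T_2)\bigr) - \bigl(T_3 T_4 - (T_3 \wedge T_4)(T_3 \vee T_4)\bigr).
\]
Each summand either vanishes (when the pair is comparable, in which case $\{T_i, T_j\} = \{L, R\}$ as a multiset and the binomial collapses to zero) or is literally a generator of $\II{n}{m}$ (when the pair is incomparable).

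The argument is essentially bookkeeping once the meet/join is identified with $L_{T,T'}/R_{T,T'}$, so there is no real obstacle; the only point worth pausing over is the comparable case in the second containment, but this is resolved trivially by observing that concatenation of two comparable columns is already in the canonical ``left-then-right'' form.
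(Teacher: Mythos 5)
Your proposal is correct and follows essentially the same route as the paper: the first equality is immediate from $\HH{n}{m}=\G{n}{m}$, the inclusion $\II{n}{m}\subseteq\R{n}{m}$ uses $T\wedge T'=L_{T,T'}$, $T\vee T'=R_{T,T'}$, and the reverse inclusion uses the same telescoping decomposition $xy-uv=\bigl[xy-(x\wedge y)(x\vee y)\bigr]-\bigl[uv-(u\wedge v)(u\vee v)\bigr]$ with the same comparable/incomparable case check. The only cosmetic difference is that the paper re-derives the meet/join formulas and splits the cases slightly more explicitly, while you cite the identification already recorded before the lemma and treat each summand uniformly.
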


\begin{proof}
    Since $\HH{n}{m}= \G{n}{m}$ we need only address the second equality. 
    Suppose $xy - (x \wedge y)(x \vee y) \in \II{n}{m}$ with $x$ and $y$ 
    incomparable elements in $\G{n}{m}$ and column reading words 
    $x_k,\dots,x_1$ and $y_\ell,\dots,y_1$, respectively. 
    Without loss of generality, assume $k \geq \ell$. Then $u = x \wedge y$ 
    is the column tableau with $1\leq i \leq \ell$ entries $\min(x_i,y_i)$ 
    and $\ell<i\leq k$ entries $x_i$, and similarly 
    $v=x \vee y$ is the column tableau with $1\leq i \leq \ell$ entries 
    $\max(x_i,y_i)$. Clearly 
    $xy-uv \in \R{n}{m}$, so $\II{n}{m} \subseteq \R{n}{m}$. 

    Suppose $xy-uv \in \R{n}{m}$ for some $u,v,x,y \in \G{n}{m}$. 
    It is easy to check that $x \vee y \vee u \vee v = x \vee y = u \vee v$ and 
    $x \wedge y \wedge u \wedge v = x \wedge y = u \wedge v$, so 
    \[
        xy-uv 
        = \big[xy - (x \wedge y)(x \vee y) \big] 
        + \big[(u \wedge v)(u \vee v) - uv\big].
    \]
    If $x$ and $y$ are incomparable, and so are $u$ and $v$, then 
    $xy - (x \wedge y)(x \vee y)$ and $(u \wedge v)(u \vee v) - uv$ 
    are both in $\II{n}{m}$, so $xy-uv \in \II{n}{m}$. 
    If $x$ and $y$ are incomparable, and $u$ and $v$ are comparable, we can 
    assume without loss of generality that $u \geq v$. Then 
    $x\wedge y = u \wedge v = v$ and $x \vee y = u \vee v = u$, so 
    $xy-uv = xy - (x \vee y)(x \wedge y) \in \II{n}{m}$. 
    If $x$ and $y$ are comparable, and so are $u$ and $v$, 
    assume without loss of generality that $x \geq y$ and $u \geq v$. 
    Then $y = x \wedge y = u \wedge v = v$ and 
    $x = x \vee y = u \vee v = u$, so $xy-uv = 0 \in \II{n}{m}$. 
\end{proof}

A construction for an algebra where the generators are defined via 
the incomparable pairs in an arbitrary distributive lattice, 
like that in Lemma \ref{lem:initial-ideal}, was considered by 
Hibi in \cite{Hibi}. Thus, Lemma \ref{lem:initial-ideal} proves that 
$\A{n}{m}$ coincides with the so-called Hibi algebra of $\G{n}{m}$, 
with poset structure given by $\leq$ above. More importantly, 
Gonciulea--Lakshmibai used a similar construction in \cite{GL96} to prove 
that the algebra of global sections 
$\bigoplus_{\underline{a}}\Gamma(SL_m/Q,L^{\underline{a}})$ of any partial 
flag variety $SL_m/Q$ admits a flat degeneration given by the Hibi algebra 
of the poset of minimal coset representatives. Consequently, 
Lemma \ref{lem:initial-ideal} implies that $\A{n}{m}$ recovers 
the construction of Gonciulea--Lakshmibai for $SL_m/\Q{n}{m}$, 
and hence $\R{n}{m}$ coincides with the initial ideal of the ideal of 
Pl{\"u}cker relations for $SL_m/\Q{n}{m}$. An analogous construction 
using $\GT$-patterns was also considered in \cite{KimProtsak}. 

\begin{theorem}\label{thm:flatdeg}
    For $m>n$, the tableaux algebra $\A{n}{m}$ is a flat degeneration 
    of $\bigoplus_{\underline{a}}\Gamma(SL_m/\Q{n}{m},L^{\underline{a}})$. 
\end{theorem}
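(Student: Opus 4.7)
The plan is to assemble the theorem from the pieces already developed in the section. The key observation, essentially made in the paragraph immediately preceding the theorem, is that Gonciulea--Lakshmibai's general construction in \cite{GL96} produces, for any partial flag variety $SL_m/Q$, a flat $1$-parameter family whose generic fiber is $\operatorname{Proj}$ of the Pl\"ucker algebra $\bigoplus_{\underline{a}}\Gamma(SL_m/Q,L^{\underline{a}})$ and whose special fiber is $\operatorname{Proj}$ of the Hibi algebra associated to the distributive lattice $W^Q$ of minimal coset representatives, equipped with the Bruhat order. In particular, the Hibi algebra is realized as the quotient by the initial ideal of the Pl\"ucker ideal with respect to a suitable weight degeneration, and flatness reduces to checking that this initial ideal has the same Hilbert function as the Pl\"ucker ideal.

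Specializing to $Q = \Q{n}{m}$, I would first identify the lattice $\HH{n}{m} = \bigcup_{k=1}^{n} W^k$ of minimal coset representatives of $\mathfrak{S}_m/(\mathfrak{S}_k\times \mathfrak{S}_{m-k})$, under Bruhat order, with the column-tableau poset $(\G{n}{m},\leq)$ described just before Lemma~\ref{lem:initial-ideal}; this is the standard well-known identification, since the Bruhat order on $W^k$ corresponds entrywise to the componentwise order on increasing sequences in $\{1,\dots,m\}$ of length $k$, which is precisely our order on columns. Under this identification, the join and meet in the lattice $\HH{n}{m}$ agree with the operations $\vee$ and $\wedge$ defined on $\G{n}{m}$, so the Hibi ideal of $\HH{n}{m}$ equals $\II{n}{m}$.

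Next, I would invoke Lemma~\ref{lem:initial-ideal}, which gives $\II{n}{m} = \R{n}{m}$ inside $\fk[\HH{n}{m}] = \fk[\G{n}{m}]$, so that the Hibi algebra of $\HH{n}{m}$ is exactly $\A{n}{m}$. Composing this with the Gonciulea--Lakshmibai flat family for $SL_m/\Q{n}{m}$ yields the desired statement: the special fiber of the family is $\A{n}{m}$ and the generic fiber is $\bigoplus_{\underline{a}}\Gamma(SL_m/\Q{n}{m},L^{\underline{a}})$. This is the natural extension of Theorem~\ref{thm:degeneration-n=m} from $m = n$ to $m > n$, since nothing in the Gonciulea--Lakshmibai argument is specific to the complete flag case.

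The main obstacle, which I would verify carefully, is the dictionary between the Bruhat order on $\HH{n}{m}$ and the column poset on $\G{n}{m}$: one must confirm that the join/meet operations match so that the Hibi relations on one side translate term for term into the binomial generators of $\R{n}{m}$, and that the weight order witnessing the degeneration on the Pl\"ucker side induces exactly these binomials as leading binomials of the straightening laws. Once this order-theoretic identification is in place, flatness is a direct citation of \cite{GL96}, and the theorem follows. As a byproduct, this realization also confirms the unified perspective hinted at in Remark~\ref{rema:embedding-flag-variety}, explaining why the cases $m = n$ and $m > n$ fit into a single construction.
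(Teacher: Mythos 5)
Your proposal follows essentially the same route as the paper's proof: identify $\HH{n}{m}$ with the column-tableau lattice $\G{n}{m}$, invoke the Gonciulea--Lakshmibai flat degeneration \cite[Theorem 5.2]{GL96} to the Hibi algebra of this finite distributive lattice, and then apply Lemma~\ref{lem:initial-ideal} to conclude that this Hibi algebra is $\A{n}{m}$. The extra verification you flag (matching the Bruhat order and join/meet with the column poset) is exactly what the paper sets up in the paragraphs preceding Lemma~\ref{lem:initial-ideal}, so your argument is correct and aligned with the paper's.
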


\begin{proof}
    Denote by $\{p_{\underline{\alpha}} \mid {\underline{\alpha}} \in \HH{n}{m}\}$ the generating set of 
    $\bigoplus_{\underline{a}}\Gamma(SL_m/\Q{n}{m},L^{\underline{a}})$. 
    Since $\HH{n}{m}=\G{n}{m}$ is a finite distributive lattice, 
    there exists a flat degeneration from 
    $\bigoplus_{\underline{a}}\Gamma(SL_m/\Q{n}{m},L^{\underline{a}})$ to 
    $\fk[\HH{n}{m}]/\II{n}{m}$ by~\cite[Theorem 5.2]{GL96}. 
    Since $\fk[\HH{n}{m}]/\II{n}{m} = \A{n}{m}$ 
    by Lemma~\ref{lem:initial-ideal}, the result follows. 
\end{proof}

In addition, we obtain the following. 

\begin{corollary}\label{cor:toricdegeneration}
    For $m>n$, the variety $\mathcal{V}(\R{m}{n})$ is a toric degeneration 
    of the partial flag variety $SL_m/\Q{n}{m}$. 
    In particular, the set $\BB{n}{m}$ 
    is a Gr{\"o}bner basis for $\R{m}{n}$. 
\end{corollary}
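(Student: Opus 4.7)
The first claim is an immediate consequence of Theorem~\ref{thm:flatdeg}. Indeed, that theorem produces a flat one-parameter family of graded $\fk$-algebras whose generic fiber is $\bigoplus_{\underline{a}}\Gamma(SL_m/\Q{n}{m},L^{\underline{a}})$ and whose special fiber is $\A{n}{m}$. Passing to $\mathrm{Proj}$ (or equivalently, to affine cones) yields a flat family of varieties with generic fiber $SL_m/\Q{n}{m}$ and special fiber $\mathcal{V}(\R{n}{m})$. Having already shown at the start of this section that $\mathcal{V}(\R{n}{m})$ is a toric variety, this is precisely a toric degeneration in the standard sense.

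For the Gröbner basis claim, the plan is to leverage the identification of $\R{n}{m}$ with the Hibi ideal $\II{n}{m}$ of the distributive lattice $\G{n}{m}$ established in Lemma~\ref{lem:initial-ideal}. It is a classical result of Hibi \cite{Hibi} that the natural generators $\{xy - (x \wedge y)(x \vee y) : x, y \text{ incomparable}\}$ of such an ideal form a squarefree quadratic Gröbner basis with respect to any reverse lexicographic order induced by a linear extension of the underlying poset; the leading term of each such binomial is the ``non-straightened'' product $xy$ coming from the incomparable pair. It then suffices to verify that $\BB{n}{m}$ is exactly this Hibi generating set, up to removing duplicates coming from reversing a pair.

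This last verification proceeds by matching conventions. Under the bijection $\HH{n}{m} = \G{n}{m}$, the identifications $L_{T,T'} = T \wedge T'$ and $R_{T,T'} = T \vee T'$ noted just before Lemma~\ref{lem:initial-ideal} turn the Hibi binomial $TT' - (T \wedge T')(T \vee T')$ into $T \star T' - L_{T,T'} \star R_{T,T'}$. The asymmetric conditions defining $\BB{n}{m}$ (that $T$ be longer than $T'$, or of equal height with $T \prec T'$, and that $L_{T,T'} \neq T$) select exactly one such binomial per unordered pair of incomparable columns, discarding the trivially zero cases where $T$ and $T'$ are comparable. The compatibility of the tableau order $\preceq$ with the lexicographic order on $\fk[\G{n}{m}]$ recorded in Remark~\ref{rema:compatible-total-orders} guarantees that the term order implicit in the definition of $\BB{n}{m}$ is of the kind Hibi's theorem requires. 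The only real obstacle is bookkeeping between the several total orders floating around (the column order $\preceq$, the order $\lhd$ on $\COL(S)$, and the reverse lex order on the polynomial ring), but all of these have been aligned in the preceding sections, so the matching is essentially mechanical.
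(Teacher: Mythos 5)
Your proposal is correct and follows essentially the same route as the paper: identify $\R{n}{m}$ with the Hibi ideal $\II{n}{m}$ via Lemma~\ref{lem:initial-ideal}, deduce the toric degeneration from the flat degeneration of the section ring (Theorem~\ref{thm:flatdeg}, resting on \cite{GL96}) together with toricity of $\mathcal{V}(\R{n}{m})$, invoke the standard Gr{\"o}bner basis result for lattice/Hibi ideals, and match $\BB{n}{m}$ with the binomials $xy-(x\wedge y)(x\vee y)$ for incomparable pairs using $L_{T,T'}=T\wedge T'$ and $R_{T,T'}=T\vee T'$. The only cosmetic difference is that you attribute the Gr{\"o}bner basis fact to Hibi's straightening-law result rather than to \cite[Theorem 10.6]{GL96} as the paper does, which is an equivalent citation of the same classical machinery.
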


\begin{proof}
    The first statement and the fact that the set formed by the elements 
    $xy - (x\wedge y)(x\vee y)$ with $x,y \in \HH{n}{m}$ incomparable is a 
    Gr{\"o}bner basis for $\R{m}{n}$ follows from Lemma~\ref{lem:initial-ideal} 
    and~\cite[Theorem 10.6]{GL96}. 
    Given $T,T' \in \G{n}{m} = \HH{n}{m}$ we have $T \vee T' = R_{T,T'}$ and 
    $T \wedge T' = L_{T,T'}$, and it is easy to check that 
    $T$ and $T'$ are incomparable if and only if 
    $T T' - L_{T,T'} R_{T,T'} \in \BB{n}{m}$, 
    as desired. 
\end{proof}

\begin{remark}\label{rema:embedding-flag-variety}
    Note that for the complete flag variety we actually obtain two distinct flat 
    degenerations. 
    Algebraically, this distinction is readily apparent, since $\A{m}{m}$ has 
    one more free variable than $\A{m-1}{m}$. 
    Geometrically, this distinction is not so obvious. The toric degenerations 
    $\mathcal{V}(\R{m}{m})$ and $\mathcal{V}(\R{m-1}{m})$ of $SL_m/B$ obtained 
    in Theorem~\ref{thm:degeneration-n=m} and Corollary~\ref{cor:toricdegeneration}, 
    respectively, seem to be identical because of Lemma~\ref{lem:prod-ideal}. 
    The distinction stems from the fact that 
    we are using different embeddings of the 
    complete flag variety into a product of projective spaces. 
    For visual convenience and only within this remark, 
    we denote the partial flag variety by $\mathcal{F}\ell(n,m) \coloneqq 
    \{0\subsetneq V_1\subsetneq \dots \subsetneq V_{n} \subseteq \fk^m \mid \dim(V_i)=i\}$. 
    For $n = m$ we are going through the following composition of embeddings 
    \begin{equation*}
        \begin{tikzcd}[row sep=0]
            \mathcal{F}\ell(m,m) \ar[r, hook] 
            & \prod\limits_{i = 1}^{m} Gr(i,m) \ar[r, hook, "\prod \iota_{i}"] 
            & \prod\limits_{i = 1}^{m} \mathbb{P}_{\fk}^{\binom{m}{i}-1}\\
            (V_1\subsetneq \dots \subsetneq V_{m}) \ar[r, mapsto] 
            & (V_i)_{i = 1}^m 
            & {} 
        \end{tikzcd}
    \end{equation*} 
    whereas for $n = m-1$ we are going through the following composition of embeddings 
    \begin{equation*}
        \begin{tikzcd}[row sep=0]
            \mathcal{F}\ell(m-1,m) \ar[r, hook] 
            & \prod\limits_{i = 1}^{m-1} Gr(i,m) \ar[r, hook, "\prod \iota_{i}"] 
            & \prod\limits_{i = 1}^{m-1} \mathbb{P}_{\fk}^{\binom{m}{i}-1}\\
            (V_1\subsetneq \dots \subsetneq V_{m-1} \subsetneq \fk^m) \ar[r, mapsto] 
            & (V_i)_{i = 1}^{m-1} 
            & {} 
        \end{tikzcd}
    \end{equation*} 
    where $\iota_{i} \colon Gr(i,m) \to \mathbb{P}_{\fk}^{\binom{m}{i}-1}$ denotes the 
    Pl{\"u}cker embedding. The additional free variable of $\A{m}{m}$ that does not 
    appear on $\A{m-1}{m}$ neatly corresponds to the additional point 
    $\mathbb{P}_{\fk}^{\binom{m}{m}-1} = \mathbb{P}_{\fk}^{0}$ 
    that appears in the embedding for $n = m$ but not in the embedding for $n = m-1$. 
    The underlying cause of this phenomenon is the bijection between 
    a flag $0\subsetneq V_1\subsetneq \dots \subsetneq V_{m} = \fk^m$ in 
    $\mathcal{F}\ell(m,m)$ and a flag 
    $0\subsetneq V_1\subsetneq \dots \subsetneq V_{m-1} \subsetneq \fk^m$ in 
    $\mathcal{F}\ell(m-1,m)$, which induces a homeomorphism 
    $\mathcal{F}\ell(m-1,m) \cong \mathcal{F}\ell(m,m)$. 
\end{remark}

Traditionally, enumerating the minimal Pl{\"u}cker relations 
for the partial flag $SL_m/Q$ variety is a complicated computation 
that is often subdivided by separately enumerating the so-called 
\newword{Pl{\"u}cker--Grassmann relations}, encoding the Pl{\"u}cker relations 
within each Grassmann subvariety $Gr(i,m)$ of $i$-dimensional planes 
inside $\fk^m$ for all $1\leq i <m$, 
and then enumerating the \newword{incidence relations}, which encode the 
relations between Grassmannians for distinct $i$. 
As a consequence of the proof of Theorem \ref{thm:enumeration} and 
Corollary \ref{cor:toricdegeneration} we obtain the following enumerations. 

\begin{corollary}\label{cor:pluckerenumeration}
    The minimal number of Pl{\"u}cker relations in the 
    partial flag variety $SL_m/\Q{n}{m}$ is $\varsigma(\R{n}{m})$. 
    Hence, the minimal number of Pl{\"u}cker--Grassmann relations in $Gr(i,m)$ is 
    \[  \sum_{k= \max \{ 0, 2i-m \}}^{i-1} \left( \frac{1}{2} \right) \frac{i-k-1}{i-k+1} 
        \binom{m}{k} \binom{m-k}{2i-2k} \binom{2i-2k}{i-k}\]
    for each $1\leq i <m$, and the minimal number of incidence relations in $SL_m/\Q{n}{m}$ is 
    \[
         \sum_{i=1}^{n}  \sum_{k= \max \{ 0, 2i-m \}}^{i-1} 
         \sum_{j=1}^{\min\{n-i,m-2i+k\}} \frac{i-k}{i+j-k+1} \binom{m}{k} 
        \binom{m-k}{2i+j-2k}\binom{2i+j-2k}{i-k}. 
    \]
\end{corollary}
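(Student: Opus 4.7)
My plan is to package the enumeration of Theorem~\ref{thm:enumeration} into the three claimed statements by exploiting the flat degeneration of Corollary~\ref{cor:toricdegeneration} and then reading off which pairs of columns correspond to each type of classical Plücker relation. Let $P$ denote the Plücker ideal defining $SL_m/\Q{n}{m}$ inside $\fk[\HH{n}{m}] = \fk[\G{n}{m}]$. The heart of the argument is the identification $\varsigma(\R{n}{m}) = \mu(P)$, where $\mu(P)$ denotes the minimal number of generators of $P$.

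To establish this identification, I would appeal to the fact that $\A{n}{m} = \fk[\G{n}{m}]/\R{n}{m}$ is a flat degeneration of $\fk[\HH{n}{m}]/P$ by Theorem~\ref{thm:flatdeg}, together with the fact that $\BB{n}{m}$ is a Gröbner basis for $\R{n}{m}$ by Corollary~\ref{cor:toricdegeneration}. Flatness preserves the graded Hilbert function, so
\[
\dim_\fk(\R{n}{m})_{2} \;=\; \dim_\fk \fk[\G{n}{m}]_{2} - \dim_\fk \A{n}{m}_{2} \;=\; \dim_\fk \fk[\HH{n}{m}]_{2} - \dim_\fk \bigl(\fk[\HH{n}{m}]/P\bigr)_{2} \;=\; \dim_\fk P_{2}.
\]
Since $\R{n}{m}$ is quadratic by Theorem~\ref{thm:quadratic} and $P$ is classically generated by the (quadratic) Plücker--Grassmann and incidence relations, both ideals are generated in their degree-two components, so their minimal numbers of generators equal the above dimensions and hence coincide. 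Thus $\varsigma(\R{n}{m})$ indeed counts the minimal number of Plücker relations. Equivalently, a choice of minimal Gröbner basis for $P$ lifting $\BB{n}{m}$ has cardinality $\varsigma(\R{n}{m})$.

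With this identification in hand, I would decompose the triple sum from Theorem~\ref{thm:enumeration} according to the height difference $j$ between the two columns in the pair indexing each generator in $\BB{n}{m}$. When $j=0$, both columns have the same height $i$, so the corresponding Plücker relation involves only the coordinates $p_\sigma$ with $|\sigma|=i$, that is, it lies in the defining ideal of the single Grassmannian $Gr(i,m)$ and is a Plücker--Grassmann relation. When $j\geq 1$, the two columns have distinct heights, and the lifted relation mixes Plücker coordinates from distinct Grassmannians $Gr(i,m)$ and $Gr(i+j,m)$, i.e., it is an incidence relation in the sense of~\cite{GL96}. Isolating the $\delta_{0,j}$ summands from the formula in Theorem~\ref{thm:enumeration} yields the prefactor $\tfrac{1}{2}\tfrac{i-k-1}{i-k+1}$ and produces the Plücker--Grassmann count for each fixed $i$, while the complementary $j\geq 1$ summands assemble into the incidence count, with prefactor $\tfrac{i-k}{i+j-k+1}$.

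The main subtle point is the preservation of the number of minimal generators under the flat degeneration; I expect this to be the step where care is needed, since in general passing to an initial ideal only gives an upper bound on $\mu$. Here it holds on the nose because both ideals are generated in a single homogeneous degree (degree two) and the degree-two part of the ambient polynomial ring has a well-defined Hilbert function under flatness. The remaining combinatorial separation into the two types of relations is then a direct bookkeeping exercise on the indexing set $\{(i,j,k)\}$ from Theorem~\ref{thm:enumeration}.
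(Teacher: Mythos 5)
Your proposal is correct and follows essentially the same route as the paper: it rests on the toric degeneration of Corollary~\ref{cor:toricdegeneration} (the Hibi/initial ideal picture for the Pl\"ucker ideal) combined with the $j=0$ versus $j\geq 1$ bookkeeping from the proof of Theorem~\ref{thm:enumeration}, which is exactly how the paper deduces the corollary. Your explicit justification that minimal generator counts match --- via preservation of the Hilbert function in degree two together with quadratic generation of both ideals --- is a reasonable fleshing-out of the step the paper leaves implicit.
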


It is a classical result that $\A{n}{n} \cong \GGTT_n$ is 
Cohen--Macaulay, see~\cite[Corollary 14.25]{MSbook}. A similar proof shows that 
the same result holds for any $m \geq n$. 

\begin{theorem}\label{thm:CohenMacaulay}
    For any $m\geq n$, the algebra $\A{n}{m}$ is Cohen--Macaulay.
\end{theorem}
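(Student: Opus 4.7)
The plan is to mirror the classical proof of \cite[Corollary 14.25]{MSbook} for the $m = n$ case, exploiting the Hibi-algebra structure of $\A{n}{m}$ established in Lemma~\ref{lem:initial-ideal}. First I would reduce to showing that $\fE{n}{m} = \fk[\E{n}{m}]/\R{n}{m}$ is Cohen--Macaulay, which suffices thanks to the tensor decomposition $\A{n}{m} \cong \fE{n}{m} \otimes \fF{n}{m}$ from \eqref{eq:TensorDecompose} and the fact that tensoring over $\fk$ with the polynomial ring $\fF{n}{m}$ preserves the Cohen--Macaulay property.

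Next I would choose a monomial order on $\fk[\G{n}{m}]$ for which the leading term of each Hibi-type relation $xy - (x \wedge y)(x \vee y)$ is the monomial $xy$ indexed by the incomparable pair $x, y$. The content of Corollary~\ref{cor:toricdegeneration} gives that these relations form a Gr{\"o}bner basis of $\R{n}{m} = \II{n}{m}$, so the initial ideal $\operatorname{in}(\R{n}{m})$ is precisely the Stanley--Reisner ideal of the order complex $\Delta(\G{n}{m})$ of the distributive lattice $\G{n}{m}$.

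Finally, I would appeal to two well-known facts. First, the order complex of any finite distributive lattice is shellable (a classical result of Bj{\"o}rner), hence its Stanley--Reisner ring is Cohen--Macaulay. Second, Cohen--Macaulayness is preserved under Gr{\"o}bner degeneration, so if $\fk[\G{n}{m}]/\operatorname{in}(\R{n}{m})$ is Cohen--Macaulay then so is $\fk[\G{n}{m}]/\R{n}{m}$, and a fortiori $\fE{n}{m}$. The only ingredient requiring verification beyond the $m = n$ case is that $\G{n}{m}$ is genuinely a finite distributive lattice under the meet and join operations defined prior to Lemma~\ref{lem:initial-ideal}; this is straightforward from the columnwise $\max/\min$ description and requires no essentially new argument. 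Consequently, no real obstacle arises: once Lemma~\ref{lem:initial-ideal} and Corollary~\ref{cor:toricdegeneration} are in hand, the classical machinery transports verbatim from $m = n$ to arbitrary $m \geq n$.
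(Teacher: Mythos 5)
Your argument is correct, but it is a genuinely different route from the one the paper takes. The paper does not use the Hibi/Gr{\"o}bner picture at all: it first shows that $\A{n}{m}$ is normal (using that all residue fields $\A{n}{m}/M_{\underline{t}} \cong \fk$ are algebraically closed, together with the fact that $\A{n}{m}$ is an integral domain), deduces that the semigroup $\SSYT_m^n$ is a normal affine semigroup, and then concludes Cohen--Macaulayness directly from Hochster's theorem on normal affine semigroup rings. Your route --- degenerate $\R{n}{m}$ to its squarefree initial ideal, identify it with the Stanley--Reisner ideal of the order complex of the distributive lattice $\G{n}{m}$, invoke shellability, and transfer Cohen--Macaulayness back along the Gr{\"o}bner degeneration --- is essentially the Hibi-style alternative the paper itself flags in Remark~\ref{rema:koszul-cm-knowledge}. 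Each approach buys something: Hochster's theorem is shorter once normality is in hand and stays entirely at the level of the monoid, whereas your argument works over an arbitrary field, exhibits a shellable squarefree degeneration (which also recovers Koszulness via $G$-quadraticity), and makes the role of the lattice structure explicit. Two small points to tighten: Corollary~\ref{cor:toricdegeneration} asserts that $\BB{n}{m}$ is a Gr{\"o}bner basis without fixing the term order, so to claim that $\operatorname{in}(\R{n}{m})$ is generated by the incomparable products $xy$ you must cite the order used in Gonciulea--Lakshmibai/Hibi (e.g.\ reverse lexicographic with respect to a linear extension of $\G{n}{m}$) rather than just ``some'' Gr{\"o}bner basis statement; and your opening reduction to $\fE{n}{m}$ is unnecessary, since $\fk[\G{n}{m}]/\R{n}{m}$ is already all of $\A{n}{m}$ by Lemma~\ref{lem:initial-ideal}, so Cohen--Macaulayness of that quotient is the theorem itself.
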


\begin{proof}
    Note that $\A{n}{m} / M_{\underline{t}} = \fk$ is algebraically closed 
    for every $M_{\underline{t}} \in \maxSpec(\A{n}{m})$, so $\A{n}{m}$ is 
    normal by~\cite[Lemma 030B]{stacks-project}, so $\A{n}{m}$ is an 
    integrally closed domain by 
    Theorem~\ref{prop:IntegralDomain-Noetherian-Reduced-Jacobson}. 
    Thus the semigroup $\SSYT_m^n$ is normal 
    by~\cite[Proposition 13.5]{Stu95} and affine 
    by~\cite[Theorem 7.4]{MSbook}. 
    Being the monoid algebra of a normal affine semigroup, $\A{n}{m}$ is 
    Cohen--Macaulay by~\cite[Theorem 1]{Hochster72}. 
\end{proof}

\begin{remark}\label{rema:koszul-cm-knowledge}
    As in Remark~\ref{rema:koszul-combinatorial}, 
    our proofs have the advantage of being combinatorial 
    and elementary in nature, but there are many other 
    avenues to obtain some of the structural results of 
    $\A{n}{m}$ we present. 
    For example, once we know the tableaux algebra $\A{n}{m}$ is a flat degeneration 
    and that the minimal generating set $\BB{n}{m}$ of $\R{n}{m}$ is a 
    Gr{\"o}bner basis, we can swiftly recover that $\A{n}{m}$ is Koszul 
    from the well known notion of $G$-quadratic, see \cite{conca2000G-quadratic}. 
    Similarly, once we know $\A{n}{m}$ is an integral domain and the 
    Hibi algebra of a finite poset, we obtain Cohen--Macaulayness 
    from \cite{Hibi}. 
\end{remark}

In light of Theorems~\ref{thm:flatdeg} and~\ref{thm:CohenMacaulay}, 
the following is immediate from~\cite[Corollary 8.31]{MSbook}. 

\begin{corollary}
    The ring of global sections of the partial flag variety 
    $\bigoplus_{\underline{a}}\Gamma(SL_m/\Q{n}{m},L^{\underline{a}})$ 
    is Cohen--Macaulay. 
\end{corollary}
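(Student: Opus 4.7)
The plan is to invoke the two preceding results---namely Theorem~\ref{thm:flatdeg} and Theorem~\ref{thm:CohenMacaulay}---together with the standard principle that the Cohen--Macaulay property is preserved under deformation in a flat family, which is precisely the content of the cited \cite[Corollary 8.31]{MSbook}. Since the authors explicitly point to that corollary, the proof should really be a one-line invocation rather than any substantive argument.

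More concretely, I would first recall from Theorem~\ref{thm:flatdeg} that there is a flat $1$-parameter family whose special fiber is $\A{n}{m}$ and whose generic fiber is $\bigoplus_{\underline{a}}\Gamma(SL_m/\Q{n}{m},L^{\underline{a}})$. Next, I would recall from Theorem~\ref{thm:CohenMacaulay} that the special fiber $\A{n}{m}$ is Cohen--Macaulay. Finally, I would apply \cite[Corollary 8.31]{MSbook}, which states that in a flat family of graded algebras, if the special fiber is Cohen--Macaulay then so is the generic fiber. This immediately yields the Cohen--Macaulayness of $\bigoplus_{\underline{a}}\Gamma(SL_m/\Q{n}{m},L^{\underline{a}})$.

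There is essentially no obstacle here beyond bookkeeping: the real content lies in the earlier establishment that the tableaux algebra provides a flat degeneration (Theorem~\ref{thm:flatdeg}) and that it is itself Cohen--Macaulay (Theorem~\ref{thm:CohenMacaulay}). The only minor care one must take is to verify that the flat family produced by the Gonciulea--Lakshmibai construction indeed satisfies the hypotheses of \cite[Corollary 8.31]{MSbook}---namely that it is a graded flat family over $\fk[t]$ (or an analogous base) in which upper semi-continuity of depth applies---but this is exactly the setup of Hibi-type degenerations and is automatic from the construction recalled in the proof of Theorem~\ref{thm:flatdeg}.
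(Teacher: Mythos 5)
Your proposal matches the paper's argument exactly: the paper treats this corollary as immediate from Theorem~\ref{thm:flatdeg} and Theorem~\ref{thm:CohenMacaulay} via \cite[Corollary 8.31]{MSbook}, which transfers Cohen--Macaulayness from the special fiber $\A{n}{m}$ of the flat degeneration to the generic fiber $\bigoplus_{\underline{a}}\Gamma(SL_m/\Q{n}{m},L^{\underline{a}})$. Your added remark about verifying the graded flat family hypotheses of the cited corollary is a reasonable point of care but, as you note, is automatic from the Gonciulea--Lakshmibai construction used in Theorem~\ref{thm:flatdeg}.
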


We now showcase once again the curious behavior of the tableau algebra. 
A \newword{GCD semigroup} is a 
semigroup $S$ with the property that for any $a,b \in S$ there exists a 
$c \in S$ such that $(a+S) \cap (b+S) = c+S$. 
For $n \geq 2$, $m \geq 3$, and $m \geq n$, 
the monoid $\SSYT_m^n$ is not a GCD semigroup because 
\[
    \hackcenter{\tableau{1 & 2\\3}} 
    \in \left( \; \hackcenter{\tableau{1}} \star \SSYT_m^n \right) 
    \cap \left( \; \hackcenter{\tableau{1\\3}} \star \SSYT_m^n \right) 
    \ni \hackcenter{\tableau{1 & 1\\3}} 
\]
hence there is no column $T$ such that $T \star \SSYT_m^n$ equals the 
intersection above. 
Recall that an integral domain is a \newword{GCD domain} if any two 
elements have a greatest common divisor. In particular, a GCD domain is 
an integrally closed domain. Thus, $\A{n}{m}$ is not a GCD domain 
by~\cite[Theorem 6.4]{gilmer-parker74}, but it is an integrally closed 
domain by the proof of Theorem~\ref{thm:CohenMacaulay}. 

\begin{proposition}
    The Krull dimension of $\A{n}{m}$ is 
    \begin{align*}
        \sum_{i=1}^n {{\binom{m}{i}}} 
        \Bigg[ 
        1 + 
        \sum_{k = 1}^{\min\{i,m-i\}} 
        \binom{i}{k} 
        \Bigg[ 
        \frac{1}{2} \binom{m-i}{k} 
        - \sum_{j=k}^{\min\{n+k-i,m-i\}} \frac{k}{j+1} 
        \binom{m-i}{j} 
        \Bigg] 
        \Bigg]. 
    \end{align*}
\end{proposition}

\begin{proof}
    This follows from \cite[Proposition 7.5]{MSbook} and 
    Corollary~\ref{cor:enumeration}.
\end{proof}


\section{Connections to string cones and crystal graphs}\label{sec:crystals}

In this section we highlight connections of the tableaux algebra 
to crystal graphs for quantum group representations, including an application to 
crystal embeddings. 

Given $\g$ a complex semisimple Lie algebra and $V(\lambda)$ 
a finite dimensional irreducible $\g$-module with highest weight $\lambda$, 
the \newword{crystal graph} of $V(\lambda)$ 
is a finite directed colored graph $\B$ 
whose vertex set is given by its \newword{crystal basis} $\B(\lambda)$. 
The edges of the crystal graph are the \newword{crystal operators} 
$e_i, f_i: \B(\lambda) \to \B(\lambda) \cup \{0\}$, which satisfy 
$e_i(b)=b'$ if and only if $f_i(b')=b$. 
Crystal graphs were introduced independently by Kashiwara and 
Lusztig~\cite{kashiwara1990crystalizing,kashiwara1991crystal,Lusztig1} 
in their study of bases for quantized universal enveloping algebras, 
and have since been instrumental 
in many groundbreaking advances in representation theory and combinatorics. 
We refer the reader to~\cite{bumpschilling:crystal-bases} 
for the details.

When $\mathfrak{g} = \mathfrak{sl}_n$ and 
$\lambda$ is a partition of at most $n$ parts, 
the crystal basis of the irreducible representation $V(\lambda)$ 
is in bijection with the set $\SSYT_n(\lambda)$. Hence, we identify the $\mathfrak{sl}_n$-crystal 
$\B(\lambda)$ with $\SSYT_n(\lambda)$ endowed with 
the action of certain crystal operators $e_i$ and $f_i$ with 
$1\leq i <n$ (see \cite{kashiwara1994crystal,bumpschilling:crystal-bases} for 
a precise combinatorial description). 
In this particular case, the \newword{weight map}, with which any crystal 
graph comes equipped, is precisely the map 
$\wt : \SSYT_n(\lambda) \to \mathbb{Z}_{\geq 0}^{n}$ 
induced by Definition~\ref{def:rowweight}, namely 
$\wt(T) = \sum_{i=1}^n \wt^{(i)}(T)$ for $T \in \SSYT_n(\lambda)$. 

\begin{definition}
    Let $\cE$ and $\cF$ be the free monoids generated by 
    $\{e_i\}_{i=1}^{n-1}$ and $\{f_i\}_{i=1}^{n-1}$, respectively, 
    which act on $\B$ via concatenation of operators. 
    The unique element $b \in \B(\lambda)$ satisfying 
    $\cE\{b\} = \{b\}$ or $\cF\{b\} = \{b\}$ is called the 
    \newword{highest weight vector} or the 
    \newword{lowest weight vector}, respectively. 
\end{definition}

The highest and lowest weight vectors of $\B(\lambda) = \SSYT_n(\lambda)$ 
are fully characterized by their fillings. 
The highest weight vector $T \in \SSYT_n(\lambda)$ satisfies that 
all cells in the $i^{th}$ row of $T$ have entry $i$, and 
the lowest weight vector $T \in \SSYT_n(\lambda)$ satisfies that 
the $j^{th}$ column of $T$ has entries $\{n-h_j+1, \dots, n\}$, 
where $h_j$ is the height of the column. 

\begin{definition}
    A map $\Phi: \mathscr{B} \rightarrow \mathscr{C}$ between two $\mathfrak{g}$-crystals 
    is an \newword{embedding} when it is injective, $e_{i}(x) \neq 0$ 
    implies $\Phi(e_{i}(x))=e_{i}(\Phi(x)) \neq 0$, and $f_{i}(x) \neq 0$ 
    implies $\Phi(f_{i}(x))=f_{i}(\Phi(x)) \neq 0$. 
\end{definition}

\begin{remark}
    Note that this condition differs with the conventional definition 
    of a crystal morphism, where an ``if and only if'' condition is employed. 
\end{remark}

Introduced by 
Littelmann~\cite{conescrystalspatterns} and 
Berenstein--Zelevinsky~\cite{BerensteinZelevinsky}, 
string cones and string polytopes' original purpose was to 
parametrize the elements in Lusztig's dual canonical 
basis~\cite{Lusztig1, Lusztig2}. These objects have important connections 
to toric degenerations of Schubert and flag varieties~\cite{caldero:toric}, 
and cluster algebras~\cite{bea_polyhedral, BossingerFourier}. 

Let $w_0$ be the longest word in the symmetric group $\mathfrak{S}_n$. 
For any given reduced word $u$ of $w_0$ and $\lambda$ a partition with 
at most $n$ rows, denote by $S_u$ the \newword{string cone} associated 
to $u$ and by $S_u(\lambda)$ the \newword{string polytope} 
associated to $\lambda$ with respect to the decomposition $u$. 
In particular, $S_u$ is an abelian semigroup under 
addition~\cite{BerensteinZelevinsky} and $S_u(\lambda)$ carries an 
$\mathfrak{sl}_n$-crystal structure~\cite{GENZ}. 

Fix $u = (1,2,1,3,2,1, \ldots, n-1, n-2, \ldots, 2,1)$ 
for the rest of the section and let 
$\underline{b} \in S_u(\mu)$; In general, there is an injective map 
\begin{equation}
    \label{eq:stringmap}
    \begin{tikzcd}[row sep=0]
        \Phi_{\underline{b}} \colon S_{u}(\lambda) \ar[r] 
        & S_{u}(\lambda + \mu)\\
        \phantom{\Phi_{\underline{b}} \colon} \underline{a} \ar[r, mapsto] 
        & \underline{a} + \underline{b}
    \end{tikzcd}
\end{equation}
which in~\cite[Corollary 4.5]{BT25} 
was shown to be a weight-preserving crystal embedding for
$\mu = (2, 1^{n-2})$ the highest root and 
$\underline{b}$ any weight zero vector in 
the adjoint representation $V(2, 1^{n-2})$. 

There is a well known bijection 
$\mathsf{S_\lambda} : S_u(\lambda) \to \SSYT_n(\lambda)$, 
between 
string cones and the set of semistandard Young tableaux 
that factors through Gelfand--Tsetlin patterns and 
induces a bijection on the corresponding crystal 
graphs~\cite[Corollary 2]{conescrystalspatterns}. 
It is straightforward to see that it also intertwines the monoid 
structures, namely 
$\mathsf{S}_{\lambda+\mu}(\underline{a} + \underline{b}) 
= \mathsf{S}_\lambda(\underline{a}) \star \mathsf{S}_\mu(\underline{b})$, 
see \cite[Section 2.3]{alexandersson:gt}. 
Thus for any fixed $T \in \SSYT_n(\mu)$, the map~\eqref{eq:stringmap} 
induces the following well-defined injective map on crystals of 
semistandard Young tableaux. 
\begin{equation*}
    \begin{tikzcd}[row sep=0]
        \Phi_T \colon \B(\lambda) \ar[r] 
        & \B(\lambda + \mu)\\
        \phantom{\Phi_T \colon} T' \ar[r, mapsto] 
        & T' \star T
    \end{tikzcd}
\end{equation*}

For $T \in \SSYT_n$, recall the \newword{column reading word} $w_{col}(T)$ 
from Definition~\ref{def:colreadingword}, and define analogously the 
\newword{row reading word} $w_{row}(T)$, obtained by 
reading the entries of $T$ from right to left in the rows 
starting with the topmost row and moving down. 
In what follows, for any partition $\lambda$ denote by 
$A_\lambda$ and $Z_\lambda$ the highest and lowest 
weight vectors of $\B(\lambda)$, respectively. 

\begin{lemma}\label{lem:insertion}
    Given any $T \in \SSYT_n(\lambda)$, the row insertion of $w_{col}(Z_\mu)$ 
    into $T$ coincides with $T \star Z_\mu$ and the column insertion of 
    $w_{row}(A_\mu)$ into $T$ coincides with $T \star A_\mu$. 
\end{lemma}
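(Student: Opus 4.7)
The plan is to decompose $Z_\mu$ (respectively $A_\mu$) into its columns (respectively rows) and iterate a single-piece insertion lemma.

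For the first claim, write $w_{col}(Z_\mu) = w_{col}(C_1) \cdots w_{col}(C_{\mu_1})$, where $C_j$ denotes the $j$th column of $Z_\mu$, of height $h_j$, so that $w_{col}(C_j) = n, n-1, \ldots, n-h_j+1$ read from bottom to top. The key auxiliary lemma, proved by induction on $h$, asserts that row-inserting the decreasing run $n, n-1, \ldots, n-h+1$ into any $T' \in \SSYT_n$ has the effect of adding the entry $n-h+i$ to row $i$ (in its sorted position) for $1 \le i \le h$, leaving rows beyond $h$ untouched. The base case $h = 1$ is immediate, since $n$ appends to row $1$ (no entry is strictly greater than $n$). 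For the inductive step, the prefix $n, n-1, \ldots, n-h+2$ places $n-h+i+1$ into row $i$ for $1 \le i \le h-1$ by hypothesis; inserting the final letter $n-h+1$ then bumps this freshly placed $n-h+2$ from row $1$, because any entry of $T'$'s row $1$ that is strictly greater than $n-h+1$ must already be $\ge n-h+2$. The bumped $n-h+2$ next displaces the freshly placed $n-h+3$ from row $2$ by the same reasoning, and so on, with $n$ finally appending to row $h$. Since row $i$ of $C_j$ is exactly $n-h_j+i$, iterating over all columns produces a tableau whose row $i$ equals row $i$ of $T$ merged with row $i$ of $Z_\mu$ in sorted order, which is the definition of row $i$ of $T \star Z_\mu$.

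For the second claim, write $w_{row}(A_\mu) = 1^{\mu_1} 2^{\mu_2} \cdots n^{\mu_n}$ and prove by induction on $i$ that column-inserting the block $1^{\mu_1} \cdots i^{\mu_i}$ into $T$ yields $T \star A_{\mu^{(i)}}$, where $\mu^{(i)} = (\mu_1, \ldots, \mu_i, 0, \ldots, 0)$. The inductive step reduces to a dual single-row lemma: column-inserting $\mu_i$ copies of $i$ into $T \star A_{\mu^{(i-1)}}$ adds $\mu_i$ copies of $i$ to row $i$ and leaves the other rows unchanged. The partition condition $\mu_1 \ge \mu_2 \ge \cdots$ combined with the fact that $A_{\mu^{(i-1)}}$ contributes $\mu_j \ge 1$ copies of $j$ to each row $j < i$ guarantees that column $1$ of $T \star A_{\mu^{(i-1)}}$ begins with the entries $1, 2, \ldots, i-1$; hence inserting $i$ triggers a cascade originating at row $i$. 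A careful column-by-column analysis, using that bumped values come from row $i$ together with the column-strictness of the intermediate tableau, then shows the cascade remains in row $i$ until it terminates by appending a new cell.

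The hard part is the cascade-containment step in the second claim: one must verify that at every column the entries in rows $j < i$ are strictly less than the currently bumped value, so that the bumping never escapes into higher rows. This is the most delicate piece of bookkeeping, requiring a careful interplay between the column-strictness of the intermediate tableau and the initial runs of $j$'s guaranteed by $A_{\mu^{(i-1)}}$.
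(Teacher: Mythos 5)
Your argument is correct and takes essentially the same route as the paper: trace the RSK bumping of each column of $w_{col}(Z_\mu)$ (a maximal decreasing run of length $h$ adds $n-h+i$ to row $i$), then treat $A_\mu$ by the row-by-row dual, which is exactly what the paper compresses into ``a dual argument exchanging rows and columns.'' The cascade-containment step you flag but do not carry out does hold, and at the same level of detail as the paper's own proof: writing $S$ for the intermediate tableau when the $p$-th copy of $i$ is inserted, the entries above the path satisfy (in the generic case) $S_{j,c} \le T_{j,c-\mu_j} \le T_{j,c-p} < T_{i,c-p} = S_{i,c-1}$ for $j<i$, using $\mu_j \ge \mu_i \ge p$ together with the column-strictness of $T$ and the initial runs of $j$'s, so the bump never escapes row $i$.
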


\begin{proof}
    This follows directly from RSK insertion 
    and the characterizations given above. 
    Namely, $w_{col}(Z_\mu)$ starts by inserting $m$ into $T$, which will 
    always be placed in the first row. Then we insert $m-1$, which will 
    be placed in the first row and bump the previous $m$ to the second row. 
    Continuing this process, 
    once we have inserted the first column of $w_{col}(Z_\mu)$ into $T$, 
    we will have exactly the star product of $T$ with 
    the first column of $w_{col}(Z_\mu)$. Induction on the columns of 
    $w_{col}(Z_\mu)$ yields 
    $[T \xleftarrow{\text{row}} w_{col}(Z_\mu)] = T \star Z_\mu$, 
    and a dual argument exchanging rows and columns yields 
    $[T \xleftarrow{\text{col}} w_{row}(A_\mu)] = T \star A_\mu$. 
\end{proof}

\begin{remark}
    It is important to note that although row and column insertion happen to coincide 
    with the star product in the special case of highest and lowest weights, 
    this is not true in general. For instance, consider the tableaux $T$ of shape 
    $(2,1)$ with $w_{col}(T)=312$. Then, then $T\star T$ has partition shape 
    $(4,2)$ but $[T \xleftarrow{\text{row}} w_{col}(T)]$ has partition shape $(3,2,1)$. 
\end{remark}

We now extend~\cite[Corollary 4.5]{BT25} to include 
highest and lowest weight vectors. 

\begin{theorem}\label{thm:crystal}
    Let $T$ be the highest or lowest weight vector of $\B(\mu)$. 
    Then $\Phi_T: \B(\lambda) \to \B(\lambda+ \mu)$ 
    is a crystal embedding. 
    In particular, the images are the induced subgraphs 
    $\Phi_{A_\mu}(\B(\lambda)) = \cE(Z_\lambda \star A_\mu)$ and 
    $\Phi_{Z_\mu}(B(\lambda)) = \cF(A_\lambda \star Z_\mu)$, and 
    yield isomorphic embeddings of 
    $\B(\lambda)$ into $\B(\lambda + \mu)$. 
\end{theorem}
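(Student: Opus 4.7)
The strategy is to realize each $\Phi_T$ as an RSK-type insertion via Lemma~\ref{lem:insertion}, and then invoke the compatibility of RSK with crystal operators on the tensor product crystal $\B(\lambda)\otimes\B(\mu)$.

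For $\Phi_{Z_\mu}$: by Lemma~\ref{lem:insertion}, $\Phi_{Z_\mu}(T') = T' \xleftarrow{\mathrm{row}} w_{col}(Z_\mu)$, which is the plactic product $T' \cdot Z_\mu$. Under the standard RSK crystal morphism, this is the image of $T' \otimes Z_\mu \in \B(\lambda) \otimes \B(\mu)$, and RSK intertwines crystal operators. Because $Z_\mu$ is lowest weight, $\varphi_i(Z_\mu) = 0$ for all $i$, so the Kashiwara tensor product rule forces $f_i(T' \otimes Z_\mu) = f_i(T') \otimes Z_\mu$ in every case and $e_i(T' \otimes Z_\mu) = e_i(T') \otimes Z_\mu$ whenever $\varepsilon_i(T') > 0$. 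Transporting back through RSK yields the crystal embedding property for $\Phi_{Z_\mu}$, with the $f_i$ identity strict: $f_i(\Phi_{Z_\mu}(T')) = 0$ if and only if $f_i(T') = 0$.

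For $\Phi_{A_\mu}$: the argument is dual. By commutativity of $\star$ and Lemma~\ref{lem:insertion}, $\Phi_{A_\mu}(T') = T' \xleftarrow{\mathrm{col}} w_{row}(A_\mu)$, which is column insertion. Column insertion is likewise a crystal morphism (dual RSK), corresponding to the opposite convention for the tensor product of crystals; equivalently, by the highest-weight analogue of Lemma~\ref{lem:insertion} (whose proof mirrors that of the lemma, inducting on columns of $A_\mu$), $\Phi_{A_\mu}(T') = A_\mu \cdot T'$ as a row-insertion plactic product with $A_\mu$ on the left, giving the image of $A_\mu \otimes T' \in \B(\mu) \otimes \B(\lambda)$ under RSK. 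Either formulation gives a parallel tensor product analysis: since $A_\mu$ is highest weight, $\varepsilon_i(A_\mu) = 0$, which forces $e_i(A_\mu \otimes T') = A_\mu \otimes e_i(T')$ always and $f_i(A_\mu \otimes T') = A_\mu \otimes f_i(T')$ whenever $\varphi_i(T') > 0$. This yields the crystal embedding for $\Phi_{A_\mu}$, strict in $e_i$.

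To identify the images, observe that the row weight is additive under $\star$ and characterizes the unique highest and lowest weight vectors, so $\Phi_{A_\mu}(A_\lambda) = A_{\lambda+\mu}$ and $\Phi_{Z_\mu}(Z_\lambda) = Z_{\lambda+\mu}$ by direct computation. The strict $e_i$-compatibility of $\Phi_{A_\mu}$ means its image is closed under $e_i$-action; since it contains $\Phi_{A_\mu}(Z_\lambda) = Z_\lambda \star A_\mu$, and since $\B(\lambda) = \cE(Z_\lambda)$ by connectedness, the image coincides with $\cE(Z_\lambda \star A_\mu)$. The dual argument using $f_i$-strictness of $\Phi_{Z_\mu}$ gives $\Phi_{Z_\mu}(\B(\lambda)) = \cF(A_\lambda \star Z_\mu)$. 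Both images are injective crystal images of $\B(\lambda)$, hence canonically isomorphic to each other as subcrystals of $\B(\lambda+\mu)$, with the explicit isomorphism induced by $T' \star A_\mu \leftrightarrow T' \star Z_\mu$.

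The main technical hurdle is justifying the dual statement for $\Phi_{A_\mu}$, whether phrased as the crystal-morphism property of column insertion or as the identity $A_\mu \cdot T' = A_\mu \star T'$ paralleling Lemma~\ref{lem:insertion}. Both are classical in type $A$ but require careful alignment of reading word conventions and tensor product orders. An elementary alternative would be to bypass plactic products entirely and verify the crystal operator compatibility for $\Phi_{A_\mu}$ directly via the signature rule on $w_{col}(T' \star A_\mu)$: the $\mu_i$ extra copies of $i$ and $\mu_{i+1}$ extra copies of $i{+}1$ contributed by $A_\mu$ sit in bracket-matched positions that leave the effective $i$-signature on $T'$ intact.
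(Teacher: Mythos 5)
Your proposal is correct and takes essentially the same route as the paper: reduce $\Phi_{A_\mu}$ and $\Phi_{Z_\mu}$ to column/row insertion via Lemma~\ref{lem:insertion}, then use the compatibility of insertion with the crystal structure, together with $\Phi_{A_\mu}(A_\lambda)=A_{\lambda+\mu}$ and $\Phi_{Z_\mu}(Z_\lambda)=Z_{\lambda+\mu}$, to identify the images as $\cE(Z_\lambda\star A_\mu)$ and $\cF(A_\lambda\star Z_\mu)$. Your explicit tensor-product-rule computation with $\varepsilon_i(A_\mu)=0$ and $\varphi_i(Z_\mu)=0$ simply spells out, in more detail, the step the paper compresses into the assertion that insertion is a crystal morphism.
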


\begin{proof}
    Suppose $T=A_\mu$, since 
    $\wt(A_\lambda \star A_\mu) = \wt(A_{\lambda+\mu})$, then 
    $\Phi_{A_\mu}(A_\lambda)=A_\lambda \star A_\mu = A_{\lambda+\mu}$ 
    by uniqueness of the highest weight vector. 
    The map $\Phi_{A_\mu}$ on $\B(\lambda)$ coincides with column insertion of 
    $A_\mu$ into all tableaux in $\SSYT_n(\lambda)$ by 
    Lemma~\ref{lem:insertion}, and since insertion is a 
    well-defined crystal morphism, then $\Phi_{A_\mu}(\B(\lambda))$ 
    is the embedded subcrystal given by 
    $\cE(\Phi_{A_\mu}(Z_\lambda)) = \cE(Z_\lambda \star A_\mu)$. 

    Similarly, if $T = Z_\mu$ then 
    $\Phi_{Z_\mu}(Z_\lambda) = Z_\lambda \star Z_\mu = Z_{\lambda+\mu}$, 
    and again by Lemma~\ref{lem:insertion} and the fact that row insertion 
    is a crystal morphism we obtain 
    $\Phi_{Z_\mu}(\B(\lambda)) 
    = \cF(\Phi_{Z_\mu}(A_\lambda)) 
    = \cF(A_\lambda \star Z_\mu)$. 
\end{proof}

In essence, the maps above pick out copies of $\B(\lambda)$ inside each 
connected component of their tensor products 
$\B(\lambda) \otimes \B(\eta) = \bigoplus_\nu \B(\nu)$. 
Namely, for each $\mu$ satisfying $\nu = \lambda + \mu$, 
the maps $\Phi_{A_\mu}$ and $\Phi_{Z_\mu}$ yield two ways of 
identifying $\B(\lambda)$ within $\B(\nu)$; 
by mapping the highest weight vector to the highest weight vector and 
by mapping the lowest weight vector to the lowest weight vector, 
respectively. 
In the particular case of $\eta=\mu$, 
the maps $\Phi_{A_\mu}$ and $\Phi_{Z_\mu}$ identify $\B(\lambda)$ 
within the leading connected component 
$\B(\lambda+\mu)$ of $\B(\lambda) \otimes \B(\mu)$. 


\bibliographystyle{plain}
\bibliography{main.bib}
\end{document}